\newtheorem{prop}{Proposition}[section]
\newtheorem{teo}[prop]{Theorem}
\newtheorem*{main}{Main Theorem}
\newtheorem{lm}[prop]{Lemma}
\theoremstyle{definition}
\newtheorem{defi}[prop]{Definition}
\newtheorem{oss}[prop]{Remark}
\newtheorem*{ack}{Acknowledgments}
\newtheorem*{open}{Open problem}
\date{\today}
\keywords{Hardy inequality, nonlocal operators, fractional Sobolev spaces.}
\subjclass[2010]{39B72, 35R11, 46E35}
\numberwithin{equation}{section}
\title[Sharp Hardy's inequality in Sobolev-Slobodecki\u{\i} spaces]{On the sharp Hardy inequality\\ in Sobolev-Slobodecki\u{\i} spaces}
\author[Bianchi]{Francesca Bianchi}
\address[F.\ Bianchi]{Dipartimento di Scienze Matematiche, Fisiche e Informatiche
	\newline\indent
	Universit\`a di Parma
	\newline\indent
	Parco Area delle Scienze 53/a, Campus, 43124 Parma, Italy}
\email{francesca.bianchi@unipr.it}
\author[Brasco]{Lorenzo Brasco}
\address[L.\ Brasco]{Dipartimento di Matematica e Informatica
	\newline\indent
	Universit\`a degli Studi di Ferrara
	\newline\indent
	Via Machiavelli 35, 44121 Ferrara, Italy}
\email{lorenzo.brasco@unife.it}
\author[Zagati]{Anna Chiara Zagati}
\address[A.\ C.\ Zagati]{Dipartimento di Scienze Matematiche, Fisiche e Informatiche
	\newline\indent
	Universit\`a di Parma
	\newline\indent
	Parco Area delle Scienze 53/a, Campus, 43124 Parma, Italy}
\email{annachiara.zagati@unipr.it}
\begin{document}

\begin{abstract}
We study the sharp constant in the Hardy inequality for fractional Sobolev spaces defined on open subsets of the Euclidean space. We first list some properties of such a constant, as well as of the associated variational problem. 
\par
We then restrict the discussion to open {\it convex} sets and compute such a sharp constant, by constructing suitable supersolutions by means of the distance function.  
Such a method of proof works only for $s\,p\ge 1$ or for $\Omega$ being a half-space. We exhibit a simple example suggesting that this method can not work for $s\,p<1$ and $\Omega$ different from a half-space. 
\par
The case $s\,p<1$ for a generic convex set is left as an interesting open problem, except in the Hilbertian setting (i.e. for $p=2$): in this case we can compute the sharp constant in the whole range $0<s<1$. This completes a result which was left open in the literature.
\end{abstract}

\maketitle

\begin{center}
\begin{minipage}{10cm}
\small
\tableofcontents
\end{minipage}
\end{center}

\section{Introduction}

\subsection{Background} Functional inequalities of Hardy--type are certainly among the most studied ones in the theory of Sobolev spaces. The prototypical example is as follows: for an open bounded set $\Omega\subset\mathbb{R}^N$ having Lipschitz boundary and every $1<p<\infty$, there exists a constant $C=C(N,p,\Omega)>0$ such that
\[
C\,\int_\Omega \frac{|u|^p}{d_\Omega^p}\le \int_\Omega |\nabla u|^p\,dx,\qquad \mbox{ for every } u\in C^\infty_0(\Omega).
\]
Here by $d_\Omega$ we mean the distance function from the boundary, defined by
\[
d_\Omega(x)=\min_{y\in\partial\Omega} |x-y|,\qquad \mbox{ for every }x\in\Omega.
\]
We refer for example to the classical monograph \cite[Theorem 21.3]{OK} for such a result, as well as to the original paper \cite{Ne} by Jind\v{r}ich Ne\v{c}as.
\par
The boundedness and regularity assumptions on $\Omega$ are placed here just for ease of presentation: actually, the range of validity of such an inequality is much more general. 
For example, a well-known instance corresponds to the choice $\Omega=\mathbb{R}^N\setminus\{0\}$: in this case, the distance function is simply given by
\[
d_{\mathbb{R}^N\setminus\{0\}}(x)=|x|,\qquad \mbox{ for every } x\not=0,
\]
and we have the well-known inequality for $p\not=N$ (see \cite[Chapter 1, Section 1.3.1]{Maz})
\begin{equation}
	\label{Hspace}
	\left|\frac{N-p}{p}\right|^p\,\int_{\mathbb{R}^N} \frac{|u|^p}{|x|^p}\,dx\le \int_{\mathbb{R}^N} |\nabla u|^p\,dx,\qquad \mbox{ for every } u\in C^\infty_0(\mathbb{R}^N\setminus\{0\}).
\end{equation}
The constant in \eqref{Hspace} is sharp, but never attained on the natural Sobolev space attached to this inequality: this is the {\it homogeneous Sobolev space} $\mathcal{D}^{1,p}_0(\mathbb{R}^N\setminus\{0\})$, defined as the completion of $C^\infty_0(\mathbb{R}^N\setminus\{0\})$ with respect to the norm
\[
\varphi\mapsto \|\nabla \varphi\|_{L^p(\mathbb{R}^N)}.
\]
In general, it is a very interesting problem to find necessary and/or sufficient conditions assuring that an open set admits a Hardy inequality: we refer for example to \cite{An, Da94, EUS, Ha, KM, LaSo, Lewis} and \cite{Wa} for some classical results in this direction.
Even more interesting is the problem of determining the best constant in such an inequality, provided it holds true. Here we think it is mandatory to cite the two papers \cite{MMP} and \cite{MS} which contain very interesting results of general character, on the problem of determining the {\it sharp Hardy constant}. The latter is defined by 
\[
\mathfrak{h}_p(\Omega):=\inf_{u\in C^\infty_0(\Omega)} \left\{\int_\Omega |\nabla u|^p\,dx\, :\, \int_\Omega \frac{|u|^p}{d_\Omega^{p}}\,dx=1\right\},\qquad 1<p<\infty.
\] 
For a better comprehension of the contents of the present paper, it is particularly useful to recall the result of \cite[Theorem 11]{MMP}: this shows that for every $\Omega\subsetneq\mathbb{R}^N$ open {\it convex} set and every $1<p<\infty$, we have
\[
\mathfrak{h}_p(\Omega)=\mathfrak{h}_p(\mathbb{H}^N_+)=\left(\frac{p-1}{p}\right)^p,
\]
see also \cite[Theorem 1]{MaSo}.
Here we use the symbol
\begin{equation}
	\label{halfspaces}
	\mathbb{H}^1_+=(0,+\infty)\qquad \mbox{ and }\qquad \mathbb{H}^N_+=\mathbb{R}^{N-1}\times (0,+\infty),\quad \mbox{ for }N\ge 2.
\end{equation}
\subsection{Goal of the paper} In this paper, we want to tackle the same kind of problem in the setting of {\it Sobolev-Slobodecki\v{\i} fractional spaces}, that recently have attracted much attention (see \cite{DPV} for a friendly introduction to the subject). More precisely, for $1<p<\infty$ and $0<s<1$ we define the following nonlocal quantity
\[
[u]_{W^{s,p}(\mathbb{R}^N)}=\left(\iint_{\mathbb{R}^N\times\mathbb{R}^N} \frac{|u(x)-u(y)|^p}{|x-y|^{N+s\,p}}\,dx\,dy\right)^\frac{1}{p},\qquad \mbox{ for every } u\in C^\infty_0(\Omega),
\]
usually called {\it Gagliardo seminorm} or also {\it Gagliardo-Slobodecki\v{\i} seminorm}.
Then we study the sharp constant in the {\it fractional $(s,p)-$Hardy inequality}
\begin{equation}
	\label{hardystart}
	C\,\int_\Omega \frac{|u|^p}{d_\Omega^{s\,p}}\,dx\le [u]_{W^{s,p}(\mathbb{R}^N)}^p,\qquad \mbox{ for every } u\in C^\infty_0(\Omega).
\end{equation}
Such a sharp constant is obviously defined by the following variational problem
\[
\mathfrak{h}_{s,p}(\Omega):=\inf_{u\in C^\infty_0(\Omega)} \left\{[u]^p_{W^{s,p}(\mathbb{R}^N)}\, :\, \int_\Omega \frac{|u|^p}{d_\Omega^{s\,p}}\,dx=1\right\}.
\]
Observe that both integral quantities appearing in \eqref{hardystart} have the same scaling, thus it is easily seen that $\mathfrak{h}_{s,p}(\Omega)$ can not depend on the {\it size} of $\Omega$, i.e. in other words we have 
\[
\mathfrak{h}_{s,p}(\mu\,\Omega)=\mathfrak{h}_{s,p}(\Omega),\qquad \mbox{ for every } \mu>0.
\]
We first recall that the fractional counterpart of \eqref{Hspace} has been obtained in \cite[Theorem 1.1]{FSspace} by Rupert Frank and Robert Seiringer, who found the sharp constant for the punctured space $\mathbb{R}^N\setminus\{0\}$.
\par
In the case of {\it convex} sets, in \cite[Theorem 1.1]{BC} the second author and Eleonora Cinti proved that inequality \eqref{hardystart} holds for every $\Omega\subsetneq\mathbb{R}^N$, every $1<p<\infty$ and $0<s<1$. Moreover, the same result comes with an explicit lower bound on $\mathfrak{h}_{s,p}(\Omega)$: we have
\begin{equation}
	\label{lowerbaubau}
	\frac{\mathcal{C}}{s\,(1-s)}\le \mathfrak{h}_{s,p}(\Omega),
\end{equation}
for some computable constant $\mathcal{C}=\mathcal{C}(N,p)>0$. The dependence on the fractional parameter $s$ in the previous lower bound is optimal, as explained in \cite[Remark 1.2]{BC}.
The proof of \eqref{lowerbaubau} is based on the following fact: on a convex set, the power $s$ of the distance function  is {\it weakly superharmonic}, in a suitable sense, that is 
\[
(-\Delta_p)^s d_\Omega^s\ge 0,\qquad \mbox{ in }\Omega.
\]
Here $(-\Delta_p)^s$ is the {\it fractional $p-$Laplacian of order $s$}, formally defined for $1<p<\infty$ and $0<s<1$ by
\[
(-\Delta_p)^s u(x)=2\,\mathrm{P.V.} \int_{\mathbb{R}^N} \frac{|u(x)-u(y)|^{p-2}\,(u(x)-u(y))}{|x-y|^{N+s\,p}}\,dy,
\]
see Section \ref{sec:2} for the precise definition. Such an operator naturally comes into play, since its weak form is precisely the first variation of the Gagliardo-Slobodecki\v{\i} seminorm raised to the power $p$.
\vskip.2cm\noindent
The primary goal of the present paper was to improve \eqref{lowerbaubau}, by computing the sharp constant $\mathfrak{h}_{s,p}(\Omega)$ in the case of convex sets. As we will explain below, we partially succeeded in our goal. We first notice that for: 
\vskip.2cm
\begin{itemize}
	\item $p=2$, $0<s<1$ and $\Omega=\mathbb{H}^N_+$;
	\vskip.2cm
	\item $p=2$, $1/2\le s<1$ and $\Omega\subsetneq\mathbb{R}^N$ any open convex set;
\end{itemize}
\vskip.2cm
such a constant has been computed in \cite[Theorem 1.1]{BD} and \cite[Theorem 5]{FMT}, respectively. Some comments are in order on these results: actually, the statement of \cite[Theorem 1.1]{BD} is concerned with the sharp constant in the ``regional'' fractional $(s,2)-$Hardy inequality, i.\,e.
\[
C\,\int_{\mathbb{H}^N_+} \frac{|u|^2}{d_{\mathbb{H}^N_+}^{2\,s}}\,dx\le [u]^2_{W^{s,2}(\mathbb{H}^N_+)},\qquad \mbox{ for every } u\in C^\infty_0(\mathbb{H}^N_+),
\]
where for $1<p<\infty$, $0<s<1$ and every open set $E\subset\mathbb{R}^N$ we used the notation
\[
[u]_{W^{s,p}(E)}:=\left(\iint_{E\times E} \frac{|u(x)-u(y)|^p}{|x-y|^{N+s\,p}}\,dx\,dy\right)^\frac{1}{p}.
\]
Observe that this Gagliardo-Slobodecki\v{\i} seminorm on the right-hand side is computed on the smaller set $E\times E$, rather than on the whole $\mathbb{R}^N\times \mathbb{R}^N$, as in our case. 
Then the authors of \cite{BD} remark that our constant $\mathfrak{h}_{s,2}(\mathbb{H}^N_+)$ can be obtained from this ``regional'' constant, see\footnote{It should be noticed that such a formula contains a small typo: the ``regional'' term $\mathcal{C}(u,u)$ should be replaced by the ``global'' one $\mathcal{K}(u,u)$, in the notation of \cite{BD}.} \cite[formula (1.5)]{BD} or \eqref{mannaggiatte} below.
\par
Regarding \cite[Theorem 2.2]{FMT}, this has been obtained by appealing to the so-called {\it Caffarelli-Silvestre extension formula} (see \cite{CaSi}). Such a tool is quite specific of the Hilbertian setting $p=2$ and it permits to transform the {\it nonlocal} variational problem of determining $\mathfrak{h}_{s,2}$ into a weighted {\it local} variational problem, with one extra variable. However, this tool is not available for $p\not =2$ and thus a different proof is needed.
\vskip.2cm\noindent
By expanding the ideas of \cite{BD}, we will use that a crucial role in the determination of $\mathfrak{h}_{s,p}(\Omega)$ is played by positive weak (super)solutions of the equation
\begin{equation}
	\label{eqintro}
	(-\Delta_p)^s u=\lambda\,\frac{u^{p-1}}{d_\Omega^{s\,p}},\qquad \mbox{ in }\Omega.
\end{equation}
This can be regarded as the Euler-Lagrange equation of the variational problem connected with $\mathfrak{h}_{s,p}(\Omega)$.
More precisely, we will use the following fact  
\begin{equation}
	\label{charintro}
	\mathfrak{h}_{s,p}(\Omega)=\sup\Big\{\lambda\ge 0\, :\, \mbox{equation \eqref{eqintro} admits a positive local weak supersolution}\Big\},
\end{equation}
see the companion paper \cite[Theorem 1.1]{BBZ}.
Such a characterization holds true for {\it every} open set $\Omega\subsetneq\mathbb{R}^N$. Formula \eqref{charintro} permits us to refine the method of proof used in \cite{BC} and to give more precise estimates, by constructing suitable supersolutions in the case of convex sets. These will be 
\begin{equation}
	\label{optimald}
	d_\Omega^\frac{s\,p-1}{p},
\end{equation}
extended by $0$ outside $\Omega$. This method is the extension to $p\not=2$ and to general convex sets of the one employed by Krzysztof Bogdan and Bart\l omiej Dyda in the aforementioned paper \cite{BD}.

\begin{oss}[``Regional'' Hardy's inequalities]
	For ease of completeness, we recall that there exists a huge literature on the ``regional'' fractional Hardy inequality, i.e. 
	\begin{equation}
		\label{regionale}
		C\,\int_\Omega \frac{|u|^p}{d_\Omega^{s\,p}}\,dx\le [u]^p_{W^{s,p}(\Omega)},\qquad \mbox{ for every } u\in C^\infty_0(\Omega).
	\end{equation}
	It would be impossible to list all the contributions, but we like to single out \cite{Dy2}, which eventually became a landmark and contains many references on this subject. Other recent interesting papers on this problem are \cite{DK, DV, EUSV, Sk}. All these references are related to the problem of finding necessary/sufficient conditions on $\Omega$, $p$ and $s$  for \eqref{regionale} to hold.
	\par
	The problem of determining the sharp constant in \eqref{regionale}  has been addressed, apart for the above mentioned references \cite{BD, FS, FSspace}, by Michael Loss and Craig Sloane in \cite[Theorem 1.2]{LS}, still in the case of convex sets. The further restriction $s\,p>1$ is needed, since on bounded Lipschitz sets \eqref{regionale} {\it can not hold} for $s\,p\le 1$ (see \cite[Section 2]{Dy2}).
	\par
	Concerning this restriction, we conclude this remark with a problem which is still open to the best of our knowledge. For $s\,p<1$ and 
	\[
	\Omega=\Big\{(x',x_N)\in\mathbb{R}^{N-1}\times\mathbb{R}\, :\, x_N>f(x')\Big\},
	\] 
	with $f:\mathbb{R}^{N-1}\to \mathbb{R}$ a convex Lipschitz function, it is known that \eqref{regionale} holds true, see \cite[Theorem 1.1, case (T3)]{Dy2}. However {\it the sharp constant is not known in this case}, even in the case when $\Omega$ is a convex cone (not coinciding with a half-space). 
\end{oss}

\subsection{Main results: convex sets}

We need at first to settle some notation. For every $1<p<\infty$ and $0<s<1$, we define
\begin{equation}
	\label{cost1d}
	\Lambda_{s,p}:=2\,\int_0^1 \dfrac{\left|1-t^\frac{s\,p-1}{p}\right|^p}{(1-t)^{1+s\,p}}\,dt+\dfrac{2}{s\,p}.
\end{equation}
As we will see below, this will coincide with $\mathfrak{h}_{s,p}(\mathbb{H}^1_+)$. 
For every $k\in\mathbb{N}$ and $\alpha\ge 0$, we set
\[
\mathcal{I}(k;\alpha)=\int_0^{+\infty} t^k\,(1+t^2)^{-\frac{k+2+\alpha}{2}}\,dt.
\]
By indicating with $\omega_k$ the volume of the $k-$dimensional open ball of radius $1$, we define\footnote{For $N\ge 2$, such a constant could be explicitly computed in terms of the Gamma function, see for example the proof of \cite[Lemma 2.4]{FS}. For our purposes, this is not very important and we prefer not to appeal to such an explicit expression. Observe that such a constant depends on $s$ and $p$ only through their product $s\,p$.}
\begin{equation}
	\label{costd}
	C_{N,s\,p}:=\left\{\begin{array}{lr}
		(N-1)\,\omega_{N-1}\,\mathcal{I}(N-2;s\,p),& \mbox{ for } N\ge 2,\\
		&\\
		1,& \mbox{ for } N=1.
	\end{array}
	\right.
\end{equation}
We can now state our main result concerning the determination of $\mathfrak{h}_{s,p}$ for convex sets. 
\begin{main}
	Let $N\ge 1$ and $1<p<\infty$, then we have:
	\begin{enumerate} 
		\item for every $0<s<1$
		\[
		\mathfrak{h}_{s,p}(\mathbb{H}^N_+)=C_{N,s\,p}\,\Lambda_{s,p};
		\]
		\item for every $1/p\le s<1$ and every $\Omega\subsetneq\mathbb{R}^N$ open convex set 
		\[
		\mathfrak{h}_{s,p}(\Omega)=\mathfrak{h}_{s,p}(\mathbb{H}^N_+);
		\]
		\item for $p=2$, for every $0<s<1$ and every $\Omega\subsetneq\mathbb{R}^N$ open convex set 
		\[
		\mathfrak{h}_{s,2}(\Omega)=\mathfrak{h}_{s,2}(\mathbb{H}^N_+).
		\]
	\end{enumerate}
	In each case, the constant $\mathfrak{h}_{s,p}$ is not attained.
\end{main}
The previous result is obtained by combining Theorems \ref{teo:half}, \ref{teo:sp} and \ref{teo:p2} below. We first notice that point (1) could be obtained by using \cite[Theorem 1.1]{FS} for the ``regional'' case and then observing that
\begin{equation}
	\label{mannaggiatte}
	[u]^p_{W^{s,p}(\mathbb{H}^N_+)}=[u]^p_{W^{s,p}(\mathbb{R}^N)}-\frac{2\,C_{N,s\,p}}{s\,p}\,\int_{\mathbb{H}^N_+} \frac{|u|^p}{d_{\mathbb{H}^N_+}^{s\,p}}\,dx,\quad \mbox{ for every } u\in C^\infty_0(\mathbb{H}^N_+),
\end{equation}
as in \cite{BD}.
However, we prefer to give a direct proof of this fact: this is consistent with a detailed analysis of the supersolution method, taken in the present paper.
This analysis should (hopefully) lead to a better comprehension of the problem. We also think that such an analysis gives some interesting byproducts. Actually, we show in this paper that:
\begin{itemize}
	\item if $\Omega\subsetneq\mathbb{R}^N$ is an open convex set, the function $d_\Omega^\beta$ (extended by $0$ outside $\Omega$) is always a local positive weak supersolution of the equation \eqref{eqintro}
	for some $\lambda=\lambda(\beta)$, provided that the exponent $\beta$ is such that
	\[
	0\le \beta<\frac{s\,p}{p-1},
	\]
	see part (1) of Theorem \ref{teo:supersoluzioniconv};
	\vskip.2cm
	\item when $\Omega$ is an half-space, the last range can be enlarged to
	\[
	-\frac{1}{p-1}< \beta<\frac{s\,p}{p-1},
	\]
	i.e. we can even admit {\it negative} powers, see part (2) of Theorem \ref{teo:supersoluzioniconv};
	\vskip.2cm
	\item the coefficient $\lambda=\lambda(\beta)$ appearing in \eqref{eqintro} is maximal for the choice
	\[
	\beta=\frac{s\,p-1}{p},
	\]
	provided this is compatible with the previous restrictions, see Remark \ref{oss:meglio}. This neatly explains why \eqref{optimald}
	is the optimal choice, provided this is feasible. This generalizes the observation made in the proof of \cite[Theorem 1]{BD}, for $p=2$ and $\Omega=\mathbb{H}^N_+$; 
	\vskip.2cm
	\item moreover, in a very simple situation (i.e. $\Omega$ being an interval, $p=2$ and $s=1/2$), we show that the restriction $\beta\ge 0$ is unavoidable for this method to work. Indeed, we show in this case that, as soon as $\beta<0$, the function $d_\Omega^\beta$ {\it is no more a supersolution of} \eqref{eqintro} for any $\lambda> 0$. Actually, such a function is not even superharmonic in the fractional sense (see Appendix \ref{app:B}). Observe that in such a case, the function $d_\Omega^\beta$ is actually {\it convex} on $\Omega$ and non-smooth in correspondence of the maximum point of $d_\Omega$. These two facts can be held responsible for this undesired behaviour of $d_\Omega^\beta$. 
	\par
	We point out that this phenomenon does not occur in the case of half-spaces, since in this case the distance function is always smooth;
	\vskip.2cm
	\item the previous points clarify why for the range $s<1/p$ the optimal choice \eqref{optimald} should not be feasible, for convex sets not coinciding with half-spaces. In turn, this suggests that this method for constructing optimal supersolutions of \eqref{eqintro} {\it is doomed to fail} in this regime, unless $\Omega$ is a half-space;
	\vskip.2cm
	\item finally, the Hilbertian case $p=2$ is peculiar: in this case we can exploit the existence of a fractional analog of the {\it Kelvin transform} (see \cite{BZ, ROS}) to ``transplant'' the supersolution $d_{\mathbb{H}^1_+}^\beta$ from the half-line $\mathbb{H}^1_+$ to any bounded interval. This permits to determine the sharp Hardy constant for an interval, {\it without restrictions on $s$}. 
	Once we have this, we can use a ``Fubini-type argument along directions'' as in \cite{LS} (a method which goes back to \cite{Da}) and obtain the value of $\mathfrak{h}_{s,2}$, for any convex set. This in particular covers the case $0<s< 1/2$, which was left open in \cite[Theorem 5]{FMT}. Moreover, with respect to \cite{FMT}, we can obtain sharpness of the constant without any additional condition on the set, apart for its convexity.
	\vskip.2cm
\end{itemize}
Then we leave the following interesting
\begin{open}
	Prove that for $N\ge 1$, $p\in (1,\infty)\setminus\{2\}$ and $0<s<1/p$, for every $\Omega\subsetneq\mathbb{R}^N$ open convex set we have
	\[
	\mathfrak{h}_{s,p}(\Omega)=\mathfrak{h}_{s,p}(\mathbb{H}^N_+).
	\]
	Actually, in this regime, with our proof we obtain that (see Remark \ref{oss:forbice})
	\[
	C_{N,s\,p}\,\frac{2}{s\,p}\le \mathfrak{h}_{s,p}(\Omega)\le \mathfrak{h}_{s,p}(\mathbb{H}^N_+).
	\]
\end{open}

\subsection{Technical aspects of the proofs}
We want to make here some comments about our proofs. We also make some comparisons with similar results already existing in the literature. 
\par
At first, we notice that in the Main Theorem we also state that the sharp constant $\mathfrak{h}_{s,p}(\Omega)$ is not attained. Our proof of this result is based on the following general fact (see Proposition \ref{prop:nonne}): whenever an open set $\Omega\subsetneq\mathbb{R}^N$ has the following two properties:
\vskip.2cm
\begin{enumerate}
	\item[(a)] $1/d_\Omega\not\in L^1(\Omega)$;
	\vskip.2cm
	\item[(b)] there exists a local positive weak supersolution $u$ of \eqref{eqintro} with
	\[
	\lambda=\mathfrak{h}_{s,p}(\Omega)\qquad \mbox{ and }\qquad u\sim d_\Omega^\frac{s\,p-1}{p},
	\]
\end{enumerate}
then $\mathfrak{h}_{s,p}(\Omega)$ is not attained. Since property (a) is always true for a convex set, while (b) comes for free from our construction of the supersolution (recall the explanation of the previous subsection), we get immediately that $\mathfrak{h}_{s,p}(\Omega)$ is not attained for a convex set. This reasoning has been inspired to us by \cite[Theorem 1.1]{MS}.
\par
When computing the operator $(-\Delta_p)^s$ for a function of the form $t\mapsto t^\beta$ defined on the half-line (see Lemma \ref{prop:ACF}), we found it useful to use a trick taken from \cite[Appendix A]{BMS}, for tackling the singular integral in principal value sense. This trick has the advantage of giving rise to {\it homogeneous} quantities and thus convergence issues are quite easy to handle.
\par
When proving sharpness of the Hardy constant found for the half-line $\mathbb{H}^1_+$ (see the proof of Theorem \ref{teo:half}), we use some trial functions which are slightly different from those employed in \cite{BD, FS, FSspace}. Essentially, we simply approximate the ``virtual'' extremal $x^{(s\,p-1)/p}$ by adding a small $\varepsilon$ to the exponent and then letting $\varepsilon$ go to $0$. A further truncation at infinity is needed. This approach has the advantage of clarifying some Sobolev properties of power functions (see Lemma \ref{lm:potenzesobolev}), which are probably known, but not so easy to find in the literature. Moreover, it permits to treat the case $s\,p\ge 1$ and $s\,p<1$ at the same time.

\subsection{Plan of the paper}
We fix the main notation and the functional analytic setting in Section \ref{sec:2}. There we also present some inequalities and some particular trial functions that will be needed in the sequel. In Section \ref{sec:3} we prove some general properties of $\mathfrak{h}_{s,p}(\Omega)$, as well as of the variational problem naturally associated with it, for general open sets. Starting from Section \ref{sec:4}, we restrict the discussion to the case of convex sets: in this section we consider the one-dimensional case and construct suitable supersolutions of \eqref{eqintro}. This is then extended to higher dimensions in Section \ref{sec:5}. Finally, Section \ref{sec:6} contains the proof of the Main Theorem.
\par
Last but not least, Appendix \ref{app:B} contains the computation of the fractional $p-$Laplacian of a negative power of the distance, in the simple case $p=2$, $s=1/2$ and $\Omega=(0,1)\subset\mathbb{R}$.

\begin{ack}
We thank Eleonora Cinti for several useful discussions and for her kind interest towards the present work. We are also grateful to Giovanni Franzina and Simon Larson for some discussions.
\par 
L.\,B. wishes to warmly express his gratitude to Pierre Bousquet, for a clarifying discussion on the integrability of the distance function, during a staying at the Institute of Applied Mathematics and Mechanics of the University of Warsaw, in the context of the {\it Thematic Research Programme: ``Anisotropic and Inhomogeneous Phenomena''}. Iwona Chlebicka and Anna Zatorska-Goldstein are gratefully acknowledged for their kind invitation and the nice working atmosphere provided during the whole staying.
\par
F.\,B. and A.\,C.\,Z. are members of the Gruppo Nazionale per l'Analisi Matematica, la Probabilit\`a
e le loro Applicazioni (GNAMPA) of the Istituto Nazionale di Alta Matematica (INdAM).
The three authors gratefully acknowledge the financial support of the projects FAR 2019 and FAR 2020 of the University of Ferrara.
\end{ack}

\section{Preliminaries}
\label{sec:2}

\subsection{Notation}
For every $1<p<\infty$, we indicate by $J_p:\mathbb{R}\to\mathbb{R}$ the monotone increasing continuous function defined by
\[
J_p(t)=|t|^{p-2}\,t,\qquad \mbox{ for every } t\in\mathbb{R}.
\]
For $x_0\in \mathbb{R}^N$ and $R>0$, we will set
\[
B_R(x_0)=\Big\{x\in\mathbb{R}^N\, :\, |x-x_0|<R\Big\},
\]
and $\omega_N:=|B_1(x_0)|$. For an open set $\Omega\subsetneq \mathbb{R}^N$, we denote by
\[
d_\Omega(x)=\min_{y\in\partial\Omega} |x-y|,\qquad \mbox{ for every }x\in\Omega,
\]
the distance function from the boundary. 
For two open sets $\Omega'\subset\Omega \subset\mathbb{R}^N$, we will write $\Omega'\Subset\Omega$ to indicate that the closure $\overline{\Omega'}$ is a compact set contained in $\Omega$. Finally, for a measurable set $E\subset \mathbb{R}^N$, we will indicate by $1_E$ the characteristic function of such a set.

\subsection{Functional analytic setting}

For $1<p<\infty$ and $0<s<1$, we consider the fractional Sobolev space 
\[
W^{s,p}(\mathbb{R}^N)=\Big\{u\in L^p(\mathbb{R}^N)\, :\, [u]_{W^{s,p}(\mathbb{R}^N)}<+\infty\Big\},
\]
where 
\[
[u]_{W^{s,p}(\mathbb{R}^N)}:=\left(\iint_{\mathbb{R}^N\times\mathbb{R}^N} \frac{|u(x)-u(y)|^p}{|x-y|^{N+s\,p}}\,dx\,dy\right)^\frac{1}{p}.
\]
This is a reflexive Banach space, when endowed with the natural norm
\[
\|u\|_{W^{s,p}(\mathbb{R}^N)}=\|u\|_{L^p(\mathbb{R}^N)}+[u]_{W^{s,p}(\mathbb{R}^N)}.
\]
For an open set $\Omega\subset\mathbb{R}^N$, we indicate by $\widetilde{W}^{s,p}_0(\Omega)$ the closure of $C^\infty_0(\Omega)$ in $W^{s,p}(\mathbb{R}^N)$. By the Hahn-Banach Theorem, this is a weakly closed subspace of $W^{s,p}(\mathbb{R}^N)$, as well.
\par
Occasionally, for an open set $\Omega\subset\mathbb{R}^N$, we will need the fractional Sobolev space defined by
\[
W^{s,p}(\Omega)=\Big\{u\in L^p(\Omega)\, :\, [u]_{W^{s,p}(\Omega)}<+\infty\Big\},
\]
where 
\[
[u]_{W^{s,p}(\Omega)}:=\left(\iint_{\Omega \times \Omega} \frac{|u(x)-u(y)|^p}{|x-y|^{N+s\,p}}\,dx\,dy\right)^\frac{1}{p}.
\]
We will denote by $W^{s,p}_{\rm loc}(\Omega)$ the space of functions $u\in L^p_{\rm loc}(\Omega)$ such that $u\in W^{s,p}(\Omega')$ for every $\Omega'\Subset \Omega$.
\par
For $0<\beta<\infty$, we also denote by $L^\beta_{s\,p}(\mathbb{R}^N)$ the following weighted Lebesgue space
\[
L^\beta_{s\,p}(\mathbb{R}^N)=\left\{u\in L^\beta_{\rm loc}(\mathbb{R}^N)\, :\, \int_{\mathbb{R}^N} \frac{|u(x)|^\beta}{(1+|x|)^{N+s\,p}}\,dx<+\infty\right\}.
\]
We observe that this is a Banach space for $\beta\ge 1$, when endowed with the natural norm. 
\par
For $1<p<\infty$ and $0<s<1$, in a open set $\Omega\subsetneq\mathbb{R}^N$ we want to consider the equation
\begin{equation}
\label{equazione}
(-\Delta_p)^s u=\lambda\,\frac{|u|^{p-2}\,u}{d_\Omega^{s\,p}},\qquad \mbox{ in }\Omega,
\end{equation}
where $\lambda\ge 0$. The symbol $(-\Delta_p)^s$ stands for the {\it fractional $p-$Laplacian of order $s$}, defined in weak form by the first variation of the convex functional
\[
u\mapsto \frac{1}{p}\,[u]_{W^{s,p}(\mathbb{R}^N)}^p.
\]
\begin{defi}
\label{defi:subsuper}
We say that $u\in W^{s,p}_{\rm loc}(\Omega)\cap L^{p-1}_{s\,p}(\mathbb{R}^N)$ is a 
\begin{itemize}
\item {\it local weak supersolution} of \eqref{equazione} if 
\[
\iint_{\mathbb{R}^N\times \mathbb{R}^N} \frac{J_p(u(x)-u(y))\,(\varphi(x)-\varphi(y))}{|x-y|^{N+s\,p}}\,dx\,dy\ge \lambda\,\int_\Omega \frac{|u(x)|^{p-2}\,u(x)}{d_\Omega(x)^{s\,p}}\,\varphi(x)\,dx,
\]
for every non-negative $\varphi\in W^{s,p}(\mathbb{R}^N)$ with compact support in $\Omega$;
\vskip.2cm
\item {\it local weak subsolution} of \eqref{equazione} if 
\[
\iint_{\mathbb{R}^N\times \mathbb{R}^N} \frac{J_p(u(x)-u(y))\,(\varphi(x)-\varphi(y))}{|x-y|^{N+s\,p}}\,dx\,dy\le \lambda\,\int_\Omega \frac{|u(x)|^{p-2}\,u(x)}{d_\Omega(x)^{s\,p}}\,\varphi(x)\,dx,
\]
for every non-negative $\varphi\in W^{s,p}(\mathbb{R}^N)$ with compact support in $\Omega$;
\vskip.2cm
\item {\it local weak solution} of \eqref{equazione} if it is both a local weak supersolution and a local weak subsolution.
\end{itemize}
\end{defi}
\begin{oss}
It is not difficult to see that under the assumptions taken on $u$ and the test function $\varphi$, the previous definition is well-posed, i.e.
\[
 \frac{J_p(u(x)-u(y))\,(\varphi(x)-\varphi(y))}{|x-y|^{N+s\,p}}\in L^1(\mathbb{R}^N\times\mathbb{R}^N).
\]
\end{oss}
The following simple result will be useful: its proof is simply based on standard properties of convolutions and it is thus omitted (see for example \cite[Lemma 11]{FSV}).
This shows in particular that in Definition \ref{defi:subsuper} we can simply take $\varphi\in C^\infty_0(\Omega)$, considered to be $0$ on the complement $\mathbb{R}^N\setminus\Omega$.
\begin{lm}
\label{lm:libeccio}
Let $1<p<\infty$ and $0<s<1$. Let $\Omega\subseteq\mathbb{R}^N$ be an open set. If $\varphi\in W^{s,p}(\mathbb{R}^N)$ has compact support in $\Omega$, then we have $\varphi\in \widetilde{W}^{s,p}_0(\Omega)$.
\end{lm}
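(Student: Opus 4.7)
The plan is to produce the desired approximating sequence by mollification. Let $K$ denote the compact support of $\varphi$ in $\Omega$, and let $\delta = \mathrm{dist}(K,\partial\Omega) > 0$. Pick a standard radial mollifier $\rho \in C^\infty_0(\mathbb{R}^N)$ with $\rho\ge 0$, $\int\rho=1$, supported in $B_1(0)$, and set $\rho_\varepsilon(x) = \varepsilon^{-N}\rho(x/\varepsilon)$. For every $0 < \varepsilon < \delta/2$ the convolution $\varphi_\varepsilon := \rho_\varepsilon * \varphi$ belongs to $C^\infty_0(\Omega)$, since its support is contained in the $\varepsilon$-neighbourhood of $K$, which is still a compact subset of $\Omega$. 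It then suffices to show that $\varphi_\varepsilon \to \varphi$ in the $W^{s,p}(\mathbb{R}^N)$ norm as $\varepsilon \to 0^+$.

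Convergence of $\varphi_\varepsilon$ to $\varphi$ in $L^p(\mathbb{R}^N)$ is a classical property of mollifiers, so the only point to check is
\[
\lim_{\varepsilon\to 0^+}[\varphi_\varepsilon - \varphi]_{W^{s,p}(\mathbb{R}^N)} = 0.
\]
To this end I would use the standard translation-invariant formulation of the Gagliardo seminorm,
\[
[u]_{W^{s,p}(\mathbb{R}^N)}^p = \int_{\mathbb{R}^N} \frac{\|\tau_h u - u\|_{L^p(\mathbb{R}^N)}^p}{|h|^{N+s\,p}}\,dh,
\]
where $\tau_h u(x) = u(x+h)$. Since convolution commutes with translations,
\[
\tau_h \varphi_\varepsilon - \varphi_\varepsilon = \rho_\varepsilon * (\tau_h \varphi - \varphi),
\]
and by Young's inequality $\|\tau_h \varphi_\varepsilon - \varphi_\varepsilon\|_{L^p} \le \|\tau_h\varphi - \varphi\|_{L^p}$. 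Consequently, for every $h\in\mathbb{R}^N$,
\[
\|\tau_h(\varphi_\varepsilon - \varphi) - (\varphi_\varepsilon - \varphi)\|_{L^p(\mathbb{R}^N)}^p \le 2^p\,\|\tau_h\varphi - \varphi\|_{L^p(\mathbb{R}^N)}^p,
\]
and this right-hand side is integrable with respect to $|h|^{-N-s\,p}\,dh$ precisely because $\varphi \in W^{s,p}(\mathbb{R}^N)$.

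Finally, for each fixed $h\in\mathbb{R}^N$, standard $L^p$ properties of mollifiers give $\tau_h(\varphi_\varepsilon - \varphi) - (\varphi_\varepsilon - \varphi) \to 0$ in $L^p(\mathbb{R}^N)$ as $\varepsilon\to 0^+$. An application of Lebesgue's dominated convergence theorem in the $h$ variable, justified by the integrable majorant above, yields $[\varphi_\varepsilon - \varphi]_{W^{s,p}(\mathbb{R}^N)} \to 0$, and therefore $\varphi_\varepsilon \to \varphi$ in $W^{s,p}(\mathbb{R}^N)$. Since each $\varphi_\varepsilon \in C^\infty_0(\Omega)$, this proves $\varphi \in \widetilde{W}^{s,p}_0(\Omega)$. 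The only mild subtlety lies in verifying the integrable domination when applying dominated convergence, but it is automatic from $\varphi\in W^{s,p}(\mathbb{R}^N)$, so no real obstacle arises.
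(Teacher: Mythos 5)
Your proof is correct, and it follows exactly the route the paper has in mind: the authors omit the argument, stating that it rests on standard properties of convolutions and citing \cite[Lemma 11]{FSV}, which is precisely the mollification argument you carry out (smallness of the support of $\rho_\varepsilon\ast\varphi$, Young's inequality for the translated differences, and dominated convergence in the $h$ variable for the Gagliardo seminorm). Nothing further is needed.
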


\subsection{Pointwise inequalities}
We recall the following discrete version of Picone's inequality, taken from \cite[Proposition 4.2]{BF} (see also \cite[Lemma 2.6]{FSspace}). We explicitly state the equality cases.
\begin{lm}[Discrete Picone's inequality]
\label{lm:picone}
Let $1<p<\infty$, for every $a,b>0$ and $c,d\ge 0$ we have 
\[
J_p(a-b)\,\left(\frac{c^p}{a^{p-1}}-\frac{d^p}{b^{p-1}}\right)\le |c-d|^p.
\]
Moreover, equality holds if and only if 
\[
\frac{c}{a}=\frac{d}{b}.
\]
\end{lm}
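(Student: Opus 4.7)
I would first reduce by symmetry to the case $a \ge b$. The swap $(a,c) \leftrightarrow (b,d)$ reverses the sign of both $J_p(a-b)$ and the bracket $c^p/a^{p-1} - d^p/b^{p-1}$, while leaving $|c-d|^p$ unchanged, so both sides of the inequality are invariant under it. The degenerate case $a=b$ is immediate (left-hand side vanishes, right-hand side is non-negative), so I would focus on $a > b$, where $J_p(a-b) = (a-b)^{p-1} > 0$.

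The heart of the proof is the \emph{perspective function}
\[
\Psi(X,Y) := \frac{X^p}{Y^{p-1}}, \qquad (X,Y) \in [0,\infty)\times(0,\infty),
\]
associated to the strictly convex map $t \mapsto t^p$. A standard fact from convex analysis is that $\Psi$ is convex on this domain, and being positively $1$-homogeneous it is therefore subadditive. When $c \ge d$, both $(c-d,a-b)$ and $(d,b)$ lie in the domain, so applying subadditivity to the decomposition $(c,a) = (c-d,a-b) + (d,b)$ gives
\[
\frac{c^p}{a^{p-1}} - \frac{d^p}{b^{p-1}} \le \frac{(c-d)^p}{(a-b)^{p-1}},
\]
and multiplying through by $(a-b)^{p-1} = J_p(a-b)$ yields the claimed inequality (with $|c-d|^p = (c-d)^p$ in this regime). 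The complementary case $c < d$ with $a > b$ is trivial: it forces $c^p/a^{p-1} < d^p/b^{p-1}$, so the left-hand side is already non-positive while the right-hand side is non-negative.

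For the equality statement, I would exploit that $t \mapsto t^p$ is \emph{strictly} convex: $\Psi$ then inherits strict convexity along non-proportional directions, and so its subadditivity is strict unless its two arguments are positively proportional. In the non-degenerate regime (all of $a,b,c,d$ positive, $a>b$, $c \ge d$) this forces $(c-d,a-b) = \lambda(d,b)$ for some $\lambda \ge 0$, which rearranges to $c/a = d/b$; the converse is a direct computation showing both sides equal $(c/a)^p |a-b|^p$. Boundary configurations ($a=b$, $c=0$, or $d=0$) are checked by inspection and are consistent with the same characterization.

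The main obstacle I anticipate is purely bookkeeping: the subadditivity argument needs $a > b$ together with positive second coordinates in $\Psi$, which forces the case distinctions above, and the equality analysis must treat the degenerate configurations separately. Conceptually the argument is routine once one adopts the perspective-function viewpoint; an alternative route via Young's inequality on cross terms works as well but tends to obscure the equality analysis, which is why I would favour the convex-analytic formulation.
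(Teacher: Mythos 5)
Your proof is correct, but it follows a genuinely different route from the paper. The paper exploits the homogeneity of $J_p$ to reduce everything to two variables $t=b/a$, $A=d/c$ (after disposing of $c=0$ by hand), and then shows by a one-variable monotonicity analysis that $\Phi(t)=J_p(1-t)\left(1-\frac{A^p}{t^{p-1}}\right)$ is increasing for $t<A$ and decreasing for $t>A$, so that $t=A$ is its \emph{unique} maximum point with value $|1-A|^p$; this single calculus argument delivers the inequality and the equality case simultaneously, for all sign configurations at once. You instead reduce by the symmetry $(a,c)\leftrightarrow(b,d)$ to $a\ge b$, and in the core case apply subadditivity of the perspective function $\Psi(X,Y)=X^p/Y^{p-1}$ to the splitting $(c,a)=(c-d,a-b)+(d,b)$ — essentially the Radon-type inequality $\frac{(X_1+X_2)^p}{(Y_1+Y_2)^{p-1}}\le \frac{X_1^p}{Y_1^{p-1}}+\frac{X_2^p}{Y_2^{p-1}}$ — handling $c<d$ by a sign observation. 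Your approach is conceptually cleaner in that it exposes the convex-analytic mechanism and needs no differentiation, and it is close in spirit to the original proofs in the literature (e.g.\ Frank--Seiringer), whereas the paper's is more self-contained and avoids your case distinctions. One small point worth making explicit in your equality discussion: the statement that subadditivity of $\Psi$ is strict unless the arguments are proportional follows in one line by writing $\frac{X_1+X_2}{Y_1+Y_2}=\theta\,\frac{X_1}{Y_1}+(1-\theta)\,\frac{X_2}{Y_2}$ with $\theta=\frac{Y_1}{Y_1+Y_2}\in(0,1)$ and invoking strict convexity of $t\mapsto t^p$; with $u=(c-d,a-b)$, $v=(d,b)$ this gives $\frac{c-d}{a-b}=\frac{d}{b}$, i.e.\ $bc=ad$, which is exactly $\frac{c}{a}=\frac{d}{b}$ and also covers your $d=0$ boundary case cleanly.
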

\begin{proof}
We first observe that if $c=0$, the inequality is equivalent to 
\[
J_p(b-a)\,\frac{d^p}{b^{p-1}}\le d^p.
\]
If we also have $d=0$, then this is trivially true. If $d>0$, then this is equivalent to 
\[
J_p(b-a)\,\frac{1}{b^{p-1}}\le 1 \qquad\mbox{ that is }\qquad \left|1-\frac{a}{b}\right|^{p-2}\,\left(1-\frac{a}{b}\right)\le 1.
\]
Since $a$ and $b$ are both positive, it is easily seen that the last inequality is true, actually with the strict inequality sign.
\par
We then suppose $c\not= 0$: we first observe that the left-hand side can be rewritten as
\[
\begin{split}
J_p(a-b)\,\left(\frac{c^p}{a^{p-1}}-\frac{d^p}{b^{p-1}}\right)&=J_p\left(1-\frac{b}{a}\right)\,\left(c^p-d^p\,\left(\frac{a}{b}\right)^{p-1}\right)\\
&=c^p\,J_p\left(1-\frac{b}{a}\right)\,\left(1-\left(\frac{d}{c}\right)^p\,\left(\frac{a}{b}\right)^{p-1}\right),
\end{split}
\]
thanks to the homogeneity of $J_p$.
If we introduce the shortcut notation
\[
t=\frac{b}{a}\qquad \mbox{ and }\qquad A=\frac{d}{c},
\]
we then get that the claimed inequality is equivalent to 
\[
J_p(1-t)\,\left(1-\frac{A^p}{t^{p-1}}\right)\le |1-A|^p,\qquad \mbox{ for every } t>0, \ A\ge 0.
\]
It is not difficult to see that the function
\[
\Phi(t)=J_p(1-t)\,\left(1-\frac{A^p}{t^{p-1}}\right),
\]
is monotone increasing for $t<A$ and monotone decreasing for $t>A$. The choice $t=A$ thus corresponds to the {\it unique} maximum point, for which we have
\[
\Phi(t)\le \Phi(A)=|1-A|^p.
\]
This concludes the proof.
\end{proof}

The following simple inequality will be useful somewhere in Section \ref{sec:4}.
\begin{lm}
\label{lm:lemmino}
Let $\beta\not =0$, then we have 
\[
|1-\tau^\beta|\le |\beta|\,\max\big\{\tau^{\beta-1},1\big\}\,(1-\tau),\qquad \mbox{ for every } \tau\in(0,1),
\]
and
\[
|1-\tau^\beta|\le |\beta|\,\max\left\{\tau^\beta,\frac{1}{\tau}\right\}\,(1-\tau),\qquad \mbox{ for every } \tau>1.
\]
\end{lm}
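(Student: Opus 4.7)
The natural route is through the mean value theorem, applied to the power function $f(x)=x^\beta$, followed by an elementary monotonicity estimate on the resulting derivative.

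\emph{First inequality.} For $\tau\in(0,1)$, Lagrange's mean value theorem applied to $f(x)=x^\beta$ on $[\tau,1]$ yields some $\xi\in(\tau,1)$ with
\[
1-\tau^\beta=\beta\,\xi^{\beta-1}\,(1-\tau),
\]
so that $|1-\tau^\beta|=|\beta|\,\xi^{\beta-1}\,(1-\tau)$. It only remains to bound $\xi^{\beta-1}$ by a quantity depending on $\tau$ alone. The key observation is that the map $\xi\mapsto\xi^{\beta-1}$ is monotone on $(\tau,1)$: increasing when $\beta\ge1$ (so bounded above by $1^{\beta-1}=1$) and decreasing when $\beta<1$ (so bounded above by $\tau^{\beta-1}$). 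In both cases $\xi^{\beta-1}\le\max\{\tau^{\beta-1},1\}$, which closes the first inequality.

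\emph{Second inequality.} Reading the factor $(1-\tau)$ on the right-hand side as $(\tau-1)$ — the only interpretation that makes the statement nontrivial when $\tau>1$ — the plan is to reduce the claim to the first inequality via the substitution
\[
\sigma:=\frac{1}{\tau}\in(0,1),\qquad \beta':=-\beta\neq 0.
\]
A direct computation gives
\[
|1-\tau^\beta|=|1-\sigma^{\beta'}|,\qquad \tau-1=\frac{1-\sigma}{\sigma},\qquad \max\!\left\{\tau^\beta,\frac{1}{\tau}\right\}=\max\{\sigma^{\beta'},\sigma\}=\sigma\cdot\max\{\sigma^{\beta'-1},1\}.
\]
Substituting these into the right-hand side, the factor $\sigma$ cancels and the claim becomes exactly
\[
|1-\sigma^{\beta'}|\le|\beta'|\,\max\{\sigma^{\beta'-1},1\}\,(1-\sigma),
\]
which is the first inequality with parameter $\beta'$ at the point $\sigma\in(0,1)$, already proved.

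\emph{Main obstacle.} There is essentially none: the whole argument is a one-line mean value estimate plus a monotonicity check, and the $\tau>1$ case reduces to the $\tau<1$ case by a reciprocal change of variable. The only small care needed is the split into $\beta\ge 1$ versus $\beta<1$ when bounding $\xi^{\beta-1}$, and the algebraic rewrite of $\max\{\tau^\beta,1/\tau\}$ after setting $\sigma=1/\tau$. One could alternatively argue directly from the convexity or concavity of $x^\beta$ according to the sign of $\beta(\beta-1)$, or from Bernoulli-type inequalities, but those approaches require a lengthier case analysis and do not seem to gain anything over the MVT route.
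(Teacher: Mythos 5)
Your proof is correct and takes essentially the same approach as the paper's: the mean value theorem for $x\mapsto x^\beta$ on $[\tau,1]$ together with the monotonicity of $\xi\mapsto\xi^{\beta-1}$ handles $\tau\in(0,1)$, and the reciprocal substitution $\tau\mapsto 1/\tau$, $\beta\mapsto-\beta$ reduces $\tau>1$ to the case already treated. Your remark that the factor $(1-\tau)$ in the second inequality should be read as $(\tau-1)$ is also correct, since the paper's own reduction produces $(1-1/\tau)>0$, so the sign in the printed statement is a minor typo.
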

\begin{proof}
We start with the case $\beta>0$. By basic Calculus, we have for $\tau\in(0,1)$
\[
|1-\tau^\beta|=1-\tau^\beta=\beta\,\xi^{\beta-1}\,(1-\tau),
\]
for some $\tau<\xi<1$. We observe that the quantity $\xi\mapsto\beta\,\xi^{\beta-1}$ is increasing for $\beta>1$ and decreasing for $0<\beta<1$. This gives the desired conclusion.
\par
The case $\beta<0$ is treated similarly. We have this time for $\tau\in(0,1)$
\[
|1-\tau^\beta|=\tau^\beta-1=\beta\,\xi^{\beta-1}\,(\tau-1)=(-\beta)\,\xi^{\beta-1}\,(1-\tau),
\]
for some $\tau<\xi<1$. By using that $\xi^{\beta-1}<\tau^{\beta-1}$, we get the conclusion, here as well.
\par
Finally, for $\tau>1$ it is sufficient to write 
\[
|1-\tau^\beta|=\left|1-\left(\frac{1}{\tau}\right)^{-\beta}\right|,
\]
and then use the previous inequality, with $-\beta$ in place of $\beta$ and $1/\tau$ in place of $\tau$.
\end{proof}

\subsection{Some trial functions}
The next result is a sort of interpolation--type inequality, for smooth functions. It is useful in order to prove some Leibniz--type formulas in fractional Sobolev spaces.
\begin{lm}
\label{lm:besov}
Let $1<p<\infty$ and $0<s<1$, then for every $\varphi\in C^1_0(\mathbb{R}^N)$ we have
\[
\sup_{x\in\mathbb{R}^N}\int_{\mathbb{R}^N} \frac{|\varphi(x)-\varphi(y)|^p}{|x-y|^{N+s\,p}}\,dy\le \frac{C}{s\,(1-s)}\,\|\nabla \varphi\|^{s\,p}_{L^\infty(\mathbb{R}^N)}\,\|\varphi\|^{(1-s)\,p}_{L^\infty(\mathbb{R}^N)},
\]
for some $C=C(N,p)>0$.
\end{lm}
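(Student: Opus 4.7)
The strategy is the classical interpolation argument: split the inner integral into a ``near'' part and a ``far'' part using a threshold radius $r>0$, bound each part by a one--sided estimate, and then optimise in $r$.

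Fix $x\in\mathbb{R}^N$ and $r>0$ (depending only on $\varphi$, to be chosen below). For the near part $|x-y|<r$, I would apply the mean value inequality
\[
|\varphi(x)-\varphi(y)|\le \|\nabla \varphi\|_{L^\infty(\mathbb{R}^N)}\,|x-y|,
\]
so that, passing to polar coordinates,
\[
\int_{B_r(x)} \frac{|\varphi(x)-\varphi(y)|^p}{|x-y|^{N+s\,p}}\,dy\le \|\nabla\varphi\|_{L^\infty(\mathbb{R}^N)}^p\, N\,\omega_N\int_0^r \rho^{p-s\,p-1}\,d\rho=\frac{N\,\omega_N}{p\,(1-s)}\,\|\nabla\varphi\|_{L^\infty(\mathbb{R}^N)}^p\,r^{p\,(1-s)}.
\]
For the far part $|x-y|\ge r$, I would instead use the trivial bound $|\varphi(x)-\varphi(y)|\le 2\,\|\varphi\|_{L^\infty(\mathbb{R}^N)}$, which gives
\[
\int_{\mathbb{R}^N\setminus B_r(x)} \frac{|\varphi(x)-\varphi(y)|^p}{|x-y|^{N+s\,p}}\,dy\le 2^p\,\|\varphi\|_{L^\infty(\mathbb{R}^N)}^p\,N\,\omega_N\int_r^{+\infty}\rho^{-s\,p-1}\,d\rho=\frac{2^p\,N\,\omega_N}{s\,p}\,\|\varphi\|_{L^\infty(\mathbb{R}^N)}^p\,r^{-s\,p}.
\]
Summing these two contributions, I obtain a bound independent of $x$:
\[
\int_{\mathbb{R}^N} \frac{|\varphi(x)-\varphi(y)|^p}{|x-y|^{N+s\,p}}\,dy\le \frac{N\,\omega_N}{p}\left(\frac{\|\nabla\varphi\|_{L^\infty(\mathbb{R}^N)}^p\,r^{p\,(1-s)}}{1-s}+\frac{2^p\,\|\varphi\|_{L^\infty(\mathbb{R}^N)}^p\,r^{-s\,p}}{s}\right).
\]

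Finally, I would equalise the two homogeneities in $r$ by choosing
\[
r=\frac{\|\varphi\|_{L^\infty(\mathbb{R}^N)}}{\|\nabla\varphi\|_{L^\infty(\mathbb{R}^N)}}
\]
(assuming $\nabla\varphi\not\equiv 0$, otherwise the statement is trivial since $\varphi\in C^1_0(\mathbb{R}^N)$ forces $\varphi\equiv 0$). With this choice both terms produce the same homogeneous factor $\|\nabla\varphi\|_{L^\infty}^{s\,p}\,\|\varphi\|_{L^\infty}^{(1-s)\,p}$, and one gets
\[
\sup_{x\in\mathbb{R}^N}\int_{\mathbb{R}^N} \frac{|\varphi(x)-\varphi(y)|^p}{|x-y|^{N+s\,p}}\,dy\le \frac{N\,\omega_N}{p}\left(\frac{1}{1-s}+\frac{2^p}{s}\right)\|\nabla\varphi\|_{L^\infty(\mathbb{R}^N)}^{s\,p}\,\|\varphi\|_{L^\infty(\mathbb{R}^N)}^{(1-s)\,p}.
\]
Since $\dfrac{1}{1-s}+\dfrac{2^p}{s}\le \dfrac{1+2^p}{s\,(1-s)}$, the right--hand side is bounded by $\dfrac{C(N,p)}{s\,(1-s)}\,\|\nabla\varphi\|_{L^\infty(\mathbb{R}^N)}^{s\,p}\,\|\varphi\|_{L^\infty(\mathbb{R}^N)}^{(1-s)\,p}$, which is the desired conclusion. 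There is no real obstacle here: the only delicate point is to keep track of the explicit $s$-- and $(1-s)$-- dependence in the two integrals, so as to recover the singular factor $1/(s\,(1-s))$, which is what makes the estimate useful in the limits $s\to 0^+$ and $s\to 1^-$.
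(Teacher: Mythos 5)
Your proposal is correct and follows essentially the same argument as the paper: split the integral at a radius $r$, use the Lipschitz bound near $x$ and the $L^\infty$ bound far away, and then optimize (or equalize homogeneities) in $r$, which is exactly the paper's choice of the parameter $\delta$. The constant bookkeeping, including the $1/(s\,(1-s))$ factor, is handled correctly.
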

\begin{proof}
We pick $\delta>0$, then for $x\in\mathbb{R}^N$ we split the integral in two parts
\[
\begin{split}
\int_{\mathbb{R}^N} \frac{|\varphi(x)-\varphi(y)|^p}{|x-y|^{N+s\,p}}\,dy&=\int_{B_\delta(x)} \frac{|\varphi(x)-\varphi(y)|^p}{|x-y|^{N+s\,p}}\,dy+\int_{\mathbb{R}^N\setminus B_\delta(x)} \frac{|\varphi(x)-\varphi(y)|^p}{|x-y|^{N+s\,p}}\,dy\\
&\le \|\nabla \varphi\|^p_{L^\infty(\mathbb{R}^N)}\,\int_{B_\delta(x)} |x-y|^{p\,(1-s)-N}\,dy\\
&+ 2^p\,\|\varphi\|^p_{L^\infty(\mathbb{R}^N)} \,\int_{\mathbb{R}^N\setminus B_\delta(x)} |x-y|^{-N-s\,p}\,dy\\
&=\frac{N\,\omega_N}{p}\, \left(\frac{\|\nabla \varphi\|^p_{L^\infty(\mathbb{R}^N)}}{1-s}\,\delta^{p\,(1-s)}+2^p\,\frac{\|\varphi\|^p_{L^\infty(\mathbb{R}^N)}}{s}\,\delta^{-s\,p}\right).
\end{split}
\]
By optimizing in $\delta>0$, we get the desired result.
\end{proof}

\begin{lm}
\label{lm:ammissibili1}
Let $1<p<\infty$ and $0<s<1$. Let $\Omega\subseteq\mathbb{R}^N$ be an open set, then for every $\eta\in C^1_0(\Omega)$ and $u\in W^{s,p}_{\rm loc}(\Omega)$, the function $\eta\,u$ is compactly supported in $\Omega$ and belongs to $W^{s,p}(\mathbb{R}^N)$. In particular, we have
\[
\eta\,u\in \widetilde{W}^{s,p}_0(\Omega).
\]
\end{lm}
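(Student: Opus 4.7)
The plan is to show that $\eta u$ has a finite Gagliardo seminorm over $\mathbb{R}^N \times \mathbb{R}^N$ by controlling three separate regions of integration, and then invoke Lemma \ref{lm:libeccio} to upgrade membership in $W^{s,p}(\mathbb{R}^N)$ to membership in $\widetilde{W}^{s,p}_0(\Omega)$. First I would set $K := \mathrm{supp}(\eta) \Subset \Omega$ (which is compact by assumption), choose a small $\delta_0 > 0$ so that $\Omega' := \{x \in \mathbb{R}^N : \mathrm{dist}(x,K) < \delta_0\}$ still satisfies $\Omega' \Subset \Omega$, and note that by the assumption $u \in W^{s,p}_{\mathrm{loc}}(\Omega)$ we have $u \in W^{s,p}(\Omega')$. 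Since $\eta u$ is bounded and supported in $K$, the $L^p$ piece is trivial; everything reduces to estimating the double integral.

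The key step is to split $\mathbb{R}^N \times \mathbb{R}^N$ as $(\Omega'\times\Omega') \cup (\Omega' \times (\mathbb{R}^N \setminus \Omega')) \cup ((\mathbb{R}^N \setminus \Omega') \times \Omega') \cup ((\mathbb{R}^N \setminus \Omega')^2)$. The last piece contributes zero, since $\eta$ vanishes off $K \subset \Omega'$. For the two symmetric cross pieces, one of the terms $\eta(x)u(x)$ or $\eta(y)u(y)$ vanishes; moreover, when the non-vanishing argument lies in $K$, the other one lies in $\mathbb{R}^N \setminus \Omega'$ so that $|x-y| \ge \delta_0$. Thus integrating $|x-y|^{-(N+sp)}$ in polar coordinates gives
\[
\iint_{\Omega' \times (\mathbb{R}^N \setminus \Omega')} \frac{|\eta(x) u(x) - \eta(y) u(y)|^p}{|x-y|^{N+sp}}\, dx\, dy \le \frac{N\,\omega_N}{s\,p\,\delta_0^{sp}}\, \|\eta\|_{L^\infty}^p\, \|u\|_{L^p(K)}^p,
\]
and similarly for the symmetric term. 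For the main piece over $\Omega' \times \Omega'$, I would use the Leibniz decomposition
\[
\eta(x)u(x) - \eta(y)u(y) = \eta(x)\bigl(u(x)-u(y)\bigr) + u(y)\bigl(\eta(x)-\eta(y)\bigr),
\]
together with the elementary inequality $|a+b|^p \le 2^{p-1}(|a|^p + |b|^p)$, obtaining a bound of the form
\[
2^{p-1}\, \|\eta\|_{L^\infty}^p\, [u]_{W^{s,p}(\Omega')}^p + 2^{p-1} \int_{\Omega'} |u(y)|^p \left( \int_{\mathbb{R}^N} \frac{|\eta(x)-\eta(y)|^p}{|x-y|^{N+sp}}\, dx\right) dy.
\]
The inner integral is uniformly bounded in $y$ by Lemma \ref{lm:besov} (applied to $\eta$, extended by zero to all of $\mathbb{R}^N$, which is permitted since $\eta \in C^1_0(\Omega) \subset C^1_0(\mathbb{R}^N)$), giving a bound in terms of $\|\nabla\eta\|_{L^\infty}$ and $\|\eta\|_{L^\infty}$ times $\|u\|_{L^p(\Omega')}^p$. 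Summing the three contributions shows $\eta u \in W^{s,p}(\mathbb{R}^N)$, and since $\eta u$ is compactly supported in $\Omega$, Lemma \ref{lm:libeccio} yields the final assertion $\eta u \in \widetilde{W}^{s,p}_0(\Omega)$.

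I expect no serious obstacle: the only subtle point is to verify carefully that the cross-term in the Leibniz split is controlled \emph{locally} by $\|u\|_{L^p(\Omega')}$ (not by a global seminorm of $u$), which is exactly what one gets from Lemma \ref{lm:besov}. The finiteness of the various integrals is then guaranteed by $u \in W^{s,p}(\Omega')$, which is the manifestation of the hypothesis $u \in W^{s,p}_{\mathrm{loc}}(\Omega)$.
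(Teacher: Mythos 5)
Your proof is correct and follows essentially the same route as the paper: split the double integral into a "local" piece over $\Omega'\times\Omega'$ (handled by a Leibniz decomposition together with Lemma \ref{lm:besov}) and a "far" cross piece where the separation $|x-y|\ge\delta_0$ makes the kernel integrable, then invoke Lemma \ref{lm:libeccio}; the only inessential differences are your use of the convexity inequality $|a+b|^p\le 2^{p-1}(|a|^p+|b|^p)$ where the paper uses Minkowski's inequality, and your parametrization of the nested sets via $K$ and a $\delta_0$-neighborhood rather than generic $\Omega''\Subset\Omega'\Subset\Omega$. One tiny slip: $\eta u$ need not be \emph{bounded} (since $u$ is only locally $W^{s,p}$, hence only $L^p_{\rm loc}$), but $\|\eta u\|_{L^p}\le \|\eta\|_{L^\infty}\|u\|_{L^p(K)}<\infty$ still makes the $L^p$ membership immediate, so the conclusion stands.
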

\begin{proof}
We consider both $\eta$ and $u$ to be extended by $0$ to $\mathbb{R}^N\setminus\Omega$. In light of Lemma \ref{lm:libeccio}, we only need to show that $\eta\,u\in W^{s,p}(\mathbb{R}^N)$. We take $\Omega''\Subset \Omega'\Subset\Omega$ such that the support of $\eta$ is contained in $\Omega''$. Then we may write 
\[
\begin{split}
[\eta\,u]^p_{W^{s,p}(\mathbb{R}^N)}&=[\eta\,u]_{W^{s,p}(\Omega')}^p+2\,\iint_{\Omega'\times (\mathbb{R}^N\setminus{\Omega'})} \frac{|\eta\,(x)\,u(x)|^p}{|x-y|^{N+s\,p}}\,dx\,dy\\
&=[\eta\,u]_{W^{s,p}(\Omega')}^p+2\,\iint_{\Omega''\times (\mathbb{R}^N\setminus{\Omega'})} \frac{|\eta\,(x)\,u(x)|^p}{|x-y|^{N+s\,p}}\,dx\,dy,
\end{split}
\]
where we used that $\eta$ vanishes outside $\Omega''$. For the first term, by using Minkowksi's inequality and Lemma \ref{lm:besov}, we can estimate it from above by means of the following Leibniz--type rule
\[
\begin{split}
[\eta\,u]_{W^{s,p}(\Omega')}&\le \left(\int_{\Omega'} |u(x)|^p\,\left(\int_{\Omega'}\frac{|\eta(x)-\eta(y)|^p}{|x-y|^{N+s\,p}}\,dy \right)\,dx\right)^\frac{1}{p}\\
&+\left(\int_{\Omega'} |\eta(y)|^p\,\left(\int_{\Omega'}\frac{|u(x)-u(y)|^p}{|x-y|^{N+s\,p}}\,dx \right)\,dy\right)^\frac{1}{p}\\
& \le \left(\frac{C}{s\,(1-s)}\right)^\frac{1}{p}\,\|u\|_{L^p(\Omega')}\,\|\nabla \eta\|^{s}_{L^\infty(\mathbb{R}^N)}\,\|\eta\|^{1-s}_{L^\infty(\mathbb{R}^N)}+\|\eta\|_{L^\infty(\mathbb{R}^N)}\, [u]_{W^{s,p}(\Omega')}<+\infty.
\end{split}
\]
For the second term, we have 
\[
\begin{split}
2\,\iint_{\Omega''\times (\mathbb{R}^N\setminus{\Omega'})} \frac{|\eta\,(x)\,u(x)|^p}{|x-y|^{N+s\,p}}\,dx\,dy&\le 2\,\|\eta\|^p_{L^\infty(\mathbb{R}^N)}\,\int_{\Omega''}|u(x)|^p\left(\int_{\mathbb{R}^N\setminus{\Omega'}} \frac{dy}{|x-y|^{N+s\,p}}\right)\,dx\\
&\le
2\,\|\eta\|^p_{L^\infty(\mathbb{R}^N)}\,\int_{\Omega''}|u(x)|^p\left(\int_{\mathbb{R}^N\setminus B_\mathfrak{d}(x)} \frac{dy}{|x-y|^{N+s\,p}}\right)\,dx\\
&=\frac{2\,N\,\omega_N}{s\,p}\,\frac{\|\eta\|^p_{L^\infty(\mathbb{R}^N)}}{\mathfrak{d}^{s\,p}}\,\int_{\Omega''}|u(x)|^p\,dx<+\infty,
\end{split}
\]
where we set $\mathfrak{d}=\mathrm{dist}(\Omega'',\partial\Omega')>0$.
\end{proof}
The following technical result will be used in order to verify the sharpness of our Hardy's inequality on the half-line $\mathbb{H}^1_+=(0,+\infty)$.
\begin{lm}\label{lm:lemmaproduct}
Let $1<p<\infty$ and $0<s<1$. For $M>0$, we take $u \in W^{s,p}((0,M))$ and extend it by $0$ outside $(0,M)$. We also suppose that there exist $C>0$ and $\beta>(s\,p-1)/p$ such that
\[
|u(x)|\le C\,x^\beta,\qquad \mbox{ for a.\,e. } x\in(0,M). 
\]
Then for every $\eta \in C^{\infty}_0((-\infty,M))$, we have
	\[
	u\,\eta \in \widetilde{W}^{s,p}_0(\mathbb{H}^1_+).
	\]
	Moreover, the following estimates hold
	\begin{equation}
	\label{eq:stimaseminorma1}
	\begin{split}
	[ u \, \eta]^p_{W^{s,p}(\mathbb{R})}& \le [u\,\eta]^p_{W^{s,p}((0,M))}+\frac{2}{s\,p}\,\int_{0}^{M} \frac{|u(x)\,\eta(x)|^p}{|x|^{s\,p}} \, dx+\frac{2\,M^{p-s\,p}}{s\,p}\,\|\eta'\|^p_{L^\infty(\mathbb{R})}\,\|u\|^p_{L^p((0,M))},
	\end{split}
	\end{equation}
	and
	\begin{equation}
	\label{eq:stimaseminorma2}
[u\,\eta]_{W^{s,p}((0,M))}\le \|\eta\|_{L^\infty(\mathbb{R})}\,[u]_{W^{s,p}((0,M))}+\left(\frac{C}{s\,(1-s)}\right)^\frac{1}{p}\,\|u\|_{L^p((0,M))}\,\|\eta'\|^s_{L^\infty(\mathbb{R})}\,\|\eta\|^{1-s}_{L^\infty(\mathbb{R})},
\end{equation}
	for some $C=C(p)>0$.
\end{lm}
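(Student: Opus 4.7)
The plan is to first establish the two seminorm estimates (in increasing order of difficulty) and then use them, together with a cutoff approximation exploiting the growth assumption $\beta>(s\,p-1)/p$, to deduce $u\,\eta\in\widetilde{W}^{s,p}_0(\mathbb{H}^1_+)$.

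For \eqref{eq:stimaseminorma2}, I would start from the elementary bound
\[
|u(x)\,\eta(x)-u(y)\,\eta(y)|\le |\eta(x)|\,|u(x)-u(y)|+|u(y)|\,|\eta(x)-\eta(y)|,
\]
raise to the $p$th power, integrate on $(0,M)\times(0,M)$ against $|x-y|^{-1-s\,p}$ and apply Minkowski's inequality. The first summand contributes exactly $\|\eta\|_{L^\infty(\mathbb{R})}\,[u]_{W^{s,p}((0,M))}$. For the second, Lemma \ref{lm:besov} applied to the smooth compactly supported function $\eta$ gives the pointwise-in-$y$ bound
\[
\int_{(0,M)}\frac{|\eta(x)-\eta(y)|^p}{|x-y|^{1+s\,p}}\,dx\le \frac{C}{s\,(1-s)}\,\|\eta'\|_{L^\infty(\mathbb{R})}^{s\,p}\,\|\eta\|_{L^\infty(\mathbb{R})}^{(1-s)p},
\]
and integrating in $y$ against $|u(y)|^p$ produces the claimed factor $\|u\|_{L^p((0,M))}$.

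For \eqref{eq:stimaseminorma1}, since the extension by zero of $u\,\eta$ is supported in $[0,M]$, the double integral on $\mathbb{R}\times\mathbb{R}$ splits as the piece on $(0,M)\times(0,M)$, which is $[u\,\eta]^p_{W^{s,p}((0,M))}$, and twice the piece on $(0,M)\times(\mathbb{R}\setminus(0,M))$. An explicit computation gives
\[
\int_{-\infty}^0\frac{dy}{|x-y|^{1+s\,p}}=\frac{1}{s\,p\,x^{s\,p}},\qquad \int_M^{+\infty}\frac{dy}{|x-y|^{1+s\,p}}=\frac{1}{s\,p\,(M-x)^{s\,p}}.
\]
The $y<0$ contribution is precisely the Hardy-type middle term. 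For the $y>M$ piece the key observation is that $\eta\in C^\infty_0((-\infty,M))$ forces $\eta(M)=0$, hence by the mean value theorem $|\eta(x)|\le \|\eta'\|_{L^\infty(\mathbb{R})}\,(M-x)$; inserting this bound and using $(M-x)^{p-s\,p}\le M^{p-s\,p}$ (which makes sense since $0<s<1$ gives $p-s\,p>0$) yields the third term of \eqref{eq:stimaseminorma1}.

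Finally, to show $u\,\eta\in\widetilde{W}^{s,p}_0(\mathbb{H}^1_+)$, fix $\xi\in C^\infty(\mathbb{R})$ with $0\le \xi\le 1$, $\xi\equiv 0$ on $(-\infty,1]$ and $\xi\equiv 1$ on $[2,+\infty)$, and put $\xi_\delta(\cdot)=\xi(\cdot/\delta)$. Each $v_\delta:=\xi_\delta\,u\,\eta$ has compact support in $(0,M)$ and lies in $W^{s,p}(\mathbb{R})$ by applying the estimates just proved with $\eta$ replaced by $\xi_\delta\,\eta\in C^\infty_0((-\infty,M))$; Lemma \ref{lm:libeccio} then places $v_\delta$ in $\widetilde{W}^{s,p}_0(\mathbb{H}^1_+)$. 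The $L^p$ convergence of $v_\delta$ to $u\,\eta$ is immediate by dominated convergence. The \emph{main obstacle} is proving that $[w_\delta]_{W^{s,p}(\mathbb{R})}\to 0$ for $w_\delta:=(1-\xi_\delta)\,u\,\eta$, which is supported in $[0,2\delta]$. Applying the splitting used for \eqref{eq:stimaseminorma1} to $w_\delta$ on $(0,2\delta)$ reduces the task to controlling $\int_0^{2\delta} |u|^p\,x^{-s\,p}\,dx$, $\int_0^{2\delta} |u\,(1-\xi_\delta)|^p(2\delta-x)^{-s\,p}\,dx$, and a diagonal piece $[(1-\xi_\delta)\,\eta\,u]_{W^{s,p}((0,2\delta))}$; the first is $O(\delta^{p\,\beta-s\,p+1})$ by the growth of $u$, the second uses the smoothness bound $(1-\xi_\delta(x))\le (C/\delta)\,(2\delta-x)$ near $x=2\delta$ and is of the same order, and the diagonal piece is handled via a Leibniz estimate as in \eqref{eq:stimaseminorma2}, combined with $\|\xi_\delta'\|_{L^\infty}\le C/\delta$ and Lemma \ref{lm:besov}, plus absolute continuity of $[u]_{W^{s,p}}$ restricted to the shrinking set. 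The exponent $p\,\beta-s\,p+1$ is strictly positive precisely because of the hypothesis $\beta>(s\,p-1)/p$, which is thus sharp for the argument to close.
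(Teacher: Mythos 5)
Your two estimates are proved exactly as in the paper: the same three-way splitting of the double integral over $\mathbb{R}\times\mathbb{R}$ into the piece on $(0,M)\times(0,M)$, the explicitly computable tail integrals over $(-\infty,0)$ and $(M,+\infty)$ (with $\eta(M)=0$ and the mean value theorem absorbing the second tail into $M^{p-s\,p}\,\|\eta'\|^p_{L^\infty}\,\|u\|^p_{L^p}$), and the Minkowski--Leibniz argument with Lemma \ref{lm:besov} for \eqref{eq:stimaseminorma2}. Where you genuinely diverge is the membership claim $u\,\eta\in\widetilde{W}^{s,p}_0(\mathbb{H}^1_+)$: the paper dispatches this in one line by invoking the identification of $\widetilde{W}^{s,p}_0(\mathbb{H}^1_+)$ with the functions of $W^{s,p}(\mathbb{R})$ vanishing a.e. on $(-\infty,0]$ (the density result \cite[Theorem 6]{FSV}), so that membership follows as soon as the seminorm is finite; you instead build an explicit approximating sequence $\xi_\delta\,u\,\eta$ compactly supported in $\mathbb{H}^1_+$, appeal to Lemma \ref{lm:libeccio}, and show the cutoff error $w_\delta=(1-\xi_\delta)\,u\,\eta$ tends to zero in $W^{s,p}(\mathbb{R})$ by reusing the same splitting plus the bounds $\|\xi_\delta'\|_{L^\infty}\le C/\delta$, $1-\xi_\delta(x)\le (C/\delta)(2\delta-x)$, the growth hypothesis (giving the rate $\delta^{(p\beta-s\,p+1)/p}$), and absolute continuity of $[u]_{W^{s,p}}$ on the shrinking square. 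Your computations check out, so the argument is correct; it is more self-contained (no citation of the density theorem for half-spaces, and it makes transparent exactly where $\beta>(s\,p-1)/p$ enters the approximation), at the price of being noticeably longer than the paper's reduction, which converts the whole membership question into the finiteness already delivered by \eqref{eq:stimaseminorma1}--\eqref{eq:stimaseminorma2}.
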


\begin{proof}
We start by observing that $\widetilde{W}^{s,p}_0(\mathbb{H}^1_+)$ can be identified with the space of functions in $W^{s,p}(\mathbb{R})$ which vanish almost everywhere in $(-\infty,0]$, thanks to \cite[Theorem 6]{FSV}. By construction, it is then sufficient to prove that $\eta\,u\in W^{s,p}(\mathbb{R})$.
\par
It is easy to see that $u \, \eta \in L^p(\mathbb{R})$, hence let us focus on proving that $u \, \eta$ has a finite $W^{s,p}$ seminorm. 
By construction, this function vanishes almost everywhere outside $(0,M)$. We decompose the seminorm as follows
	\[ 
	\begin{split}
	[ u \, \eta]^p_{W^{s,p}(\mathbb{R})} 
	& = [ u \, \eta]^p_{W^{s,p}((0,M))} + 2\,\int_0^{M} \int_{-\infty}^0 \frac{|u(x) \, \eta(x)|^p}{|x-y|^{1+s\,p}} \, dy \, dx +2\,\int_0^{M} \int_M^{+\infty} \frac{|u(x) \, \eta(x)|^p}{|x-y|^{1+s\,p}} \, dy \, dx\\
	&= [ u \, \eta]^p_{W^{s,p}((0,M))} + \frac{2}{s\,p}\,\int_0^{M} \frac{|u(x) \, \eta(x)|^p}{|x|^{s\,p}} \, dx+\frac{2}{s\,p}\,\int_0^{M} \frac{|u(x) \, \eta(x)|^p}{(M-x)^{s\,p}}  \, dx.
	\end{split}
	\]	
In order to estimate the first term on the right-hand side, we proceed similarly as in the proof of Lemma \ref{lm:ammissibili1}, so to get
	\[ 
	\begin{split}
	[ u \, \eta]_{W^{s,p}((0,M))} &\le \left[ \int_{0}^{M} |u(x)|^p \left(\int_{0}^{M} \frac{|\eta(x)-\eta(y)|^p}{|x-y|^{1+s\,p}} \, dy\right) \, dx \right]^\frac{1}{p} \\
	&+ \left[ \int_{0}^{M} |\eta(y)|^p \left(\int_{0}^{M} \frac{|u(x)-u(y)|^p}{|x-y|^{1+s\,p}} \, dx\right) \, dy \right]^\frac{1}{p} \\
	&\le \left(\frac{C}{s\,(1-s)} \right)^{\frac{1}{p}}\, \|u\|_{L^p((0,M))} \,\|\eta'\|^{s}_{L^\infty(\mathbb{R})}\,\|\eta\|^{1-s}_{L^\infty(\mathbb{R})}+ \| \eta \|_{L^{\infty}((0,M))} \, [u]_{W^{s,p}((0,M))}. 
	\end{split} \]
In the last inequality, we applied again Lemma \ref{lm:besov}. As for the other terms, we observe that
	\[
	\begin{split}
	\int_0^{M} \frac{|u(x) \, \eta(x)|^p}{|x|^{s\,p}} \, dx,
	\end{split}
	\]
	is finite, thanks to the growth assumption on $u$. Finally, by using that $\eta\in C^\infty_0((-\infty,M))$, we have 
	\[
	|u(x) \, \eta(x)|^p=|u(x)|^p\,|\eta(x)-\eta(M)|^p\le \|\eta'\|_{L^\infty(\mathbb{R})}^p\, |u(x)|^p\,(M-x)^p,\qquad \mbox{ for a.\,e. } x\in(0,M),
	\]
so that we can infer
\[
\int_0^{M} \frac{|u(x) \, \eta(x)|^p}{(M-x)^{s\,p}}  \, dx\le M^{p-s\,p}\, \|\eta'\|^p_{L^\infty(\mathbb{R})}\,\int_0^M |u(x)|^p\,dx<+\infty.
\]
This completes the proof.
\end{proof}

\begin{lm}[Fractional hidden convexity]
\label{lm:FHC}
Let $1<p<\infty$ and $0<s<1$. Let $\Omega\subseteq\mathbb{R}^N$ be an open set, for every two non-negative functions $u,v\in \widetilde{W}^{s,p}_0(\Omega)$, we set
\[
\sigma=\left(\frac{1}{2}\,u^p+\frac{1}{2}\,v^p\right)^\frac{1}{p}.
\]
Then $\sigma\in \widetilde{W}^{s,p}_0(\Omega)$ and there holds
\begin{equation}
\label{eq:hiddenconvexity}
[\sigma]^p_{W^{s,p}(\mathbb{R}^N)} \le \frac{1}{2}\, [u]^p_{W^{s,p}(\mathbb{R}^N)} + \frac{1}{2}\, [v]^p_{W^{s,p}(\mathbb{R}^N)}.
\end{equation}
Moreover, if equality holds in \eqref{eq:hiddenconvexity}, then there exists a constant $C$ such that 
\[ 	
u=C\,v,\qquad \text{ a.\,e. in } \Omega.
\]  
\end{lm}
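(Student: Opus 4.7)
The plan is to reduce the statement to a pointwise hidden convexity inequality and then integrate. Specifically, the core claim is that for every $a,b,c,d\ge 0$,
\begin{equation*}
\left|\left(\tfrac{1}{2}a^p+\tfrac{1}{2}c^p\right)^{1/p}-\left(\tfrac{1}{2}b^p+\tfrac{1}{2}d^p\right)^{1/p}\right|^p\le\tfrac{1}{2}|a-b|^p+\tfrac{1}{2}|c-d|^p.
\end{equation*}
To establish this, I would assume by symmetry that $a^p+c^p\ge b^p+d^p$, invoke the trivial bounds $a\le|a-b|+b$ and $c\le|c-d|+d$ (valid since $b,d\ge 0$, irrespective of the signs of $a-b$ and $c-d$), and then apply Minkowski's inequality on the two-point $\ell^p$-space with weights $(1/2,1/2)$. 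No case analysis is then required. Taking $a=u(x),b=u(y),c=v(x),d=v(y)$ and integrating against $|x-y|^{-N-sp}$ immediately delivers \eqref{eq:hiddenconvexity}. Since the trivial bound $\sigma^p\le\tfrac{1}{2}(u^p+v^p)$ gives $\sigma\in L^p(\mathbb{R}^N)$ for free, this puts $\sigma\in W^{s,p}(\mathbb{R}^N)$.

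For the stronger membership $\sigma\in\widetilde{W}^{s,p}_0(\Omega)$, I would use an approximation argument. Choose $u_n,v_n\in C^\infty_0(\Omega)$ with $u_n\to u$ and $v_n\to v$ in $W^{s,p}(\mathbb{R}^N)$, and replace them by $|u_n|,|v_n|$; this preserves convergence because $||\alpha|-|\beta||\le|\alpha-\beta|$, and now the approximants are non-negative and still compactly supported in $\Omega$. Each $\sigma_n:=(\tfrac{1}{2}u_n^p+\tfrac{1}{2}v_n^p)^{1/p}$ is then Lipschitz with compact support in $\Omega$, so Lemma \ref{lm:libeccio} places $\sigma_n$ in $\widetilde{W}^{s,p}_0(\Omega)$. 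The pointwise inequality applied to $u_n,v_n$ yields $[\sigma_n]^p_{W^{s,p}(\mathbb{R}^N)}\le\tfrac{1}{2}([u_n]^p+[v_n]^p)$, which is a uniform $W^{s,p}$-bound; pointwise a.e.\ convergence $\sigma_n\to\sigma$ along a subsequence identifies the weak limit, and the weak closedness of $\widetilde{W}^{s,p}_0(\Omega)$ noted earlier in the paper concludes.

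For the equality case, equality in \eqref{eq:hiddenconvexity} forces equality a.e.\ in the pointwise inequality. Tracing back, the Minkowski step is an equality only when $(|a-b|,|c-d|)$ and $(b,d)$ are parallel, and the preceding step also requires $a\ge b$ and $c\ge d$; together these force $u(x)\,v(y)=u(y)\,v(x)$ for a.e.\ $(x,y)\in\Omega\times\Omega$. A Fubini argument then selects $y_0$ with $v(y_0)>0$ (existing whenever $v\not\equiv 0$; the degenerate case is handled separately) and yields $u=(u(y_0)/v(y_0))\,v$ almost everywhere on $\Omega$. The main obstacle I anticipate is the middle step: converting the uniform seminorm bound on the approximants $\sigma_n$ into weak convergence in $W^{s,p}(\mathbb{R}^N)$ and correctly identifying the limit rests entirely on the pointwise hidden convexity inequality proved in the first step — without it, neither the uniform bound nor the approximation argument would go through.
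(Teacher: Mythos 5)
Your proposal is correct, but it is more self-contained than the paper's own proof: for \eqref{eq:hiddenconvexity} and the equality cases the paper gives no argument at all, it simply cites \cite[Lemma 4.1 \& Theorem 4.2]{FP}, and devotes its proof exclusively to the membership $\sigma\in\widetilde{W}^{s,p}_0(\Omega)$. Your first step (the discrete inequality, which is precisely the reverse triangle inequality for the weighted two-point $\ell^p$-norm, then integration against the kernel) reproves the cited inequality, and your equality analysis (strict componentwise monotonicity forcing $a\ge b$, $c\ge d$, proportionality in Minkowski, hence $u(x)\,v(y)=u(y)\,v(x)$ a.e., then Fubini) correctly reconstructs the Franzina--Palatucci equality statement. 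For the membership, the two arguments share the same skeleton (approximate, extract a uniform $W^{s,p}(\mathbb{R}^N)$ bound from the convexity inequality, use reflexivity and the weak closedness of $\widetilde{W}^{s,p}_0(\Omega)$), but differ technically: the paper uses the shifted approximants $\sigma_n=\bigl(\tfrac12(u_n+\tfrac1n)^p+\tfrac12(v_n+\tfrac1n)^p\bigr)^{1/p}-\tfrac1n$ to stay inside $C^\infty_0(\Omega)$ and then invokes the Br\'ezis--Lieb lemma to get strong $L^p$ convergence before identifying the weak limit, whereas you take the Lipschitz, compactly supported functions $\bigl(\tfrac12|u_n|^p+\tfrac12|v_n|^p\bigr)^{1/p}$, place them in $\widetilde{W}^{s,p}_0(\Omega)$ via Lemma \ref{lm:libeccio}, and identify the weak limit through a.e.\ convergence; this is leaner and equally valid. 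Two small cautions: the assertion that replacing $u_n,v_n$ by $|u_n|,|v_n|$ ``preserves convergence'' in $W^{s,p}$ is more than the inequality $\bigl||\alpha|-|\beta|\bigr|\le|\alpha-\beta|$ actually yields (it gives only $[\,|u_n|\,]_{W^{s,p}(\mathbb{R}^N)}\le[u_n]_{W^{s,p}(\mathbb{R}^N)}$), but your argument never needs strong convergence of the moduli --- a uniform seminorm bound plus a.e.\ convergence suffices, and both do follow; and in the degenerate case $v\equiv 0$, $u\not\equiv 0$ the conclusion $u=C\,v$ genuinely fails (equality in \eqref{eq:hiddenconvexity} always holds there), an issue inherited from the statement itself (and from \cite{FP}) and immaterial for its application in Lemma \ref{lm:basics}, where both minimizers are positive.
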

\begin{proof}
	The proof of \eqref{eq:hiddenconvexity} and the identification of equality cases are contained in \cite[Lemma 4.1 \& Theorem 4.2]{FP}. We just show here that $\sigma$ belongs to the relevant fractional Sobolev space, a fact that seems to have been overlooked in the literature. We first notice that 
	\[ 
	\int_{\Omega} |\sigma|^{p} \, dx = \frac{1}{2} \int_{\Omega} u^{p} \, dx + \frac{1}{2} \int_{\Omega} v^{p} \, dx < +\infty,
	\]
	and by \eqref{eq:hiddenconvexity} we have in particular
	\[
	[\sigma]_{W^{s,p}(\mathbb{R}^N)}<+\infty.
	\]
	This shows that $\sigma\in W^{s,p}(\mathbb{R}^N)$.
	We now consider $\{u_n\}_{n \in \mathbb{N}}, \{v_n\}_{n \in \mathbb{N}} \subset C^{\infty}_0(\Omega)$ two sequences which converge respectively to $u$ and $v$ in $W^{s,p}(\mathbb{R}^N)$. Since $u$ and $v$ are positive, without loss of generality we can take $u_n$ and $v_n$ to be non-negative. Moreover, up to pass to a subsequence, we can suppose to have almost everywhere convergence.
	\par
	We set
	\[ 
	\sigma_n = \left(\frac{1}{2}\,\left(u_n+\frac{1}{n}\right)^{p} + \frac{1}{2}\, \left(v_n+\frac{1}{n}\right)^p \right)^\frac{1}{p}-\frac{1}{n}, \qquad \mbox{ for every } n \in \mathbb{N}\setminus\{0\},
	\]
	and observe that $\{\sigma_n\}_{n\in\mathbb{N}}\subset C^\infty_0(\Omega)\subset \widetilde{W}^{s,p}_0(\Omega)$. Moreover, $\sigma_n$ converges to $\sigma$ almost everywhere, as $n$ goes to $\infty$. We claim that 
	\begin{equation}
	\label{BL}
	\lim_{n\to\infty} \|\sigma_n- \sigma\|_{L^p(\Omega)}=0.
	\end{equation}
	Indeed, thanks to Fatou's Lemma, it holds that
	\[ 
	\liminf_{n \to \infty} \int_{\Omega} |\sigma_n|^p \, dx \ge \int_{\Omega} |\sigma|^p \, dx.
	\]
	Conversely, we observe that\footnote{This follows by noticing that the function
	\[ h(\varepsilon)=\left(\frac{1}{2}\,\left(a+\varepsilon\right)^{p} + \frac{1}{2}\, \left(b+\varepsilon\right)^p \right)^\frac{1}{p}-\varepsilon, \quad \mbox{ for every } a, b\ge 0,
	\]
is monotone decreasing with respect to $\varepsilon\ge 0$, thus $h(\varepsilon)\le h(0)$.}
	\[
	\sigma_n\le \left(\frac{1}{2}\,u_n^{p} + \frac{1}{2}\, v_n^p \right)^\frac{1}{p},\qquad \mbox{ for every } n\in\mathbb{N}\setminus\{0\}.
	\]
By raising to the power $p$ and taking the limit, we get	
	\[ 
	\limsup_{n \to \infty} \int_{\Omega} |\sigma_n|^p \, dx \le \limsup_{n \to \infty}\left[ \frac{1}{2}\, \int_{\Omega}u_n^{p}\,dx + \frac{1}{2}\,\int_\Omega v_n^p \, dx\right] = \int_{\Omega} |\sigma|^p \, dx.
	\]
These facts entail that we have convergence of the $L^p$ norms. By joining this with the almost everywhere convergence, we get \eqref{BL} from the so-called {\it Br\'ezis-Lieb Lemma} (see \cite[Theorem 1]{BL}).
\par
	We also observe that $[\sigma_n]_{W^{s,p}(\mathbb{R}^N)}$ is bounded. Indeed, we can apply the convexity inequality \eqref{eq:hiddenconvexity} as follows
	\[
	\begin{split}
	[\sigma_n]^p_{W^{s,p}(\mathbb{R}^N)}=\left[\sigma_n+\frac{1}{n}\right]^p_{W^{s,p}(\mathbb{R}^N)}&\le \frac{1}{2}\,\left[u_n+\frac{1}{n}\right]_{W^{s,p}(\mathbb{R}^N)}^p+\frac{1}{2}\,\left[v_n+\frac{1}{n}\right]_{W^{s,p}(\mathbb{R}^N)}^p\\
	&=\frac{1}{2}\,[u_n]_{W^{s,p}(\mathbb{R}^N)}^p+\frac{1}{2}\,[v_n]_{W^{s,p}(\mathbb{R}^N)}^p,
	\end{split}
	\]
	and observe that the last terms are uniformly bounded, by construction. The uniform bound on $\|\sigma_n\|_{W^{s,p}(\mathbb{R}^N)}$ and the reflexivity of the space $W^{s,p}(\mathbb{R}^N)$ entail that $\sigma_n$ weakly converges, up to subsequences, to a function in $\widetilde{W}^{s,p}_0(\Omega)$, the latter being a weakly closed subspace of $W^{s,p}(\mathbb{R}^N)$. By the uniqueness of the limit, such a function must coincide with $\sigma$, which then belongs to $\widetilde{W}^{s,p}_0(\Omega)$.
\end{proof}

\section{The sharp fractional Hardy constant}
\label{sec:3}
Let $1<p<\infty$ and $0<s<1$. For an open set $\Omega\subsetneq\mathbb{R}^N$, we define its {\it sharp fractional $(s,p)-$Hardy constant}, i.e.
\[
\mathfrak{h}_{s,p}(\Omega):=\inf_{u\in C^\infty_0(\Omega)} \left\{[u]^p_{W^{s,p}(\mathbb{R}^N)}\, :\, \int_\Omega \frac{|u|^p}{d_\Omega^{s\,p}}\,dx=1\right\}.
\]
It is not difficult to see that 
\begin{equation}
\label{scalina}
\mathfrak{h}_{s,p}(\Omega)=\mathfrak{h}_{s,p}(t\,\Omega+x_0),\qquad \mbox{ for every } t>0,\ x_0\in\Omega.
\end{equation}
\begin{oss}
By definition of the space $\widetilde{W}^{s,p}_0(\Omega)$, one has the following equivalent definition 
\begin{equation}
\label{hardy!}
\mathfrak{h}_{s,p}(\Omega)=\inf_{u\in \widetilde{W}^{s,p}_0(\Omega)} \left\{[u]^p_{W^{s,p}(\mathbb{R}^N)}\, :\, \int_\Omega \frac{|u|^p}{d_\Omega^{s\,p}}\,dx=1\right\}.
\end{equation}
Indeed, the fact that the infimum over $\widetilde{W}^{s,p}_0(\Omega)$ is less than or equal to $\mathfrak{h}_{s,p}(\Omega)$ simply follows  from the fact that we enlarged the class of admissible functions.
\par 
To prove the converse inequality, we first observe that if $\mathfrak{h}_{s,p}(\Omega)=0$ then there is nothing to prove. If on the contrary $\mathfrak{h}_{s,p}(\Omega)>0$, then for every $u\in \widetilde{W}^{s,p}_0(\Omega)\setminus\{0\}$ we know that there exists a sequence $\{u_n\}_{n\in\mathbb{N}}\subset C^\infty_0(\Omega)$ converging to $u$ in $W^{s,p}(\mathbb{R}^N)$. Without loss of generality, we can assume that $u_n$ converges almost everywhere to $u$, as well. We then have
\[
[u]_{W^{s,p}(\mathbb{R}^N)}^p=\lim_{n\to\infty} [u_n]_{W^{s,p}(\mathbb{R}^N)}^p\ge \mathfrak{h}_{s,p}(\Omega)\,\liminf_{n\to\infty}\int_\Omega \frac{|u_n|^p}{d_\Omega^{s\,p}}\,dx\ge \mathfrak{h}_{s,p}(\Omega)\,\int_\Omega \frac{|u|^p}{d_\Omega^{s\,p}}\,dx,
\]
where we used Hardy's inequality in the first inequality and Fatou's Lemma in the second one.
\end{oss}

\begin{lm}
\label{lm:basics}
	Let $1<p<\infty$, $0<s<1$ and let $\Omega\subsetneq\mathbb{R}^N$ be an open set. If $\mathfrak{h}_{s,p}(\Omega)$ admits a non-trivial minimizer $u \in \widetilde{W}^{s,p}_0(\Omega)$, then this has constant sign in $\Omega$ and $u\not=0$ almost everywhere in $\Omega$. Moreover, the minimizer is unique, up to the choice of the sign and it is a weak solution of \eqref{equazione}, with 
$\lambda=\mathfrak{h}_{s,p}(\Omega)$.
	\end{lm}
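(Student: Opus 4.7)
The plan is to verify in turn the four claims: constant sign, the Euler-Lagrange equation, strict nonvanishing, and uniqueness, in a logical order that lets each step feed the next.

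\textbf{Step 1 (constant sign).} Since $\bigl||a|-|b|\bigr|\le |a-b|$ pointwise, we have $[|u|]_{W^{s,p}(\mathbb{R}^N)}\le[u]_{W^{s,p}(\mathbb{R}^N)}$, while the Hardy denominator is invariant under $u\mapsto|u|$. By minimality, equality must hold in the seminorm estimate, so
\[
|u(x)-u(y)|=\bigl||u(x)|-|u(y)|\bigr|\qquad\text{for a.\,e. }(x,y).
\]
This forbids strict sign changes of $u$, so after possibly replacing $u$ by $-u$, we may assume $u\ge 0$.

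\textbf{Step 2 (Euler-Lagrange equation).} For $\varphi\in C^\infty_0(\Omega)$ and small real $t$, Lemma \ref{lm:ammissibili1} yields $u+t\,\varphi\in \widetilde{W}^{s,p}_0(\Omega)$, hence
\[
F(t):=\frac{[u+t\,\varphi]^p_{W^{s,p}(\mathbb{R}^N)}}{\displaystyle\int_\Omega\frac{|u+t\,\varphi|^p}{d_\Omega^{s\,p}}\,dx}
\]
is well-defined and, by minimality, satisfies $F(t)\ge F(0)=\mathfrak{h}_{s,p}(\Omega)$. Differentiability at $t=0$ follows from the standard convexity estimate $\bigl||a+t|^p-|a|^p-p\,t\,J_p(a)\bigr|\le C(p)\,t^2(|a|^{p-2}+|t|^{p-2})$, which furnishes an integrable majorant after dividing by $t$ and passing to the limit. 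The equation $F'(0)=0$ then reads as the weak formulation of \eqref{equazione} with $\lambda=\mathfrak{h}_{s,p}(\Omega)$, and it extends to every test function with compact support in $\Omega$ by a standard density argument.

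\textbf{Step 3 (strict positivity).} By Step 2, $u\ge 0$ is a nontrivial local weak supersolution of $(-\Delta_p)^s u\ge 0$ in $\Omega$. The strong minimum principle for nonnegative, weakly $(s,p)$-superharmonic functions then forces $u>0$ almost everywhere in $\Omega$. As a less black-box alternative, one can plug a Picone-type competitor built from Lemma \ref{lm:picone} into the equation of Step 2 and deduce the same conclusion by contradiction; this is the step I expect to be the main obstacle, since it is the only one relying on a pointwise-positivity principle rather than on the variational structure directly.

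\textbf{Step 4 (uniqueness).} Let $u,v\in\widetilde{W}^{s,p}_0(\Omega)$ be two minimizers normalized by $\int_\Omega u^p/d_\Omega^{s\,p}\,dx=\int_\Omega v^p/d_\Omega^{s\,p}\,dx=1$; by Step 1 we may take $u,v\ge 0$. Set $\sigma=\bigl(\tfrac{1}{2}u^p+\tfrac{1}{2}v^p\bigr)^{1/p}$, which belongs to $\widetilde{W}^{s,p}_0(\Omega)$ by Lemma \ref{lm:FHC} and satisfies $\int_\Omega\sigma^p/d_\Omega^{s\,p}\,dx=1$. Lemma \ref{lm:FHC} and the variational definition \eqref{hardy!} give
\[
\mathfrak{h}_{s,p}(\Omega)\le [\sigma]^p_{W^{s,p}(\mathbb{R}^N)}\le \tfrac{1}{2}[u]^p_{W^{s,p}(\mathbb{R}^N)}+\tfrac{1}{2}[v]^p_{W^{s,p}(\mathbb{R}^N)}=\mathfrak{h}_{s,p}(\Omega),
\]
so the hidden-convexity inequality is saturated. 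The equality case in Lemma \ref{lm:FHC} then yields $u=C\,v$ a.e. for some constant $C\ge 0$, and the common normalization forces $C=1$. Hence $u=v$ a.e., and uniqueness up to sign is established.
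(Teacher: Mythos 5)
Your proof is correct and follows essentially the same route as the paper: constant sign via the triangle inequality for the seminorm, the Euler--Lagrange equation at the minimizer, the strong minimum principle for nonnegative supersolutions (as in \cite{BF}) for strict positivity, and the equality case of the hidden-convexity Lemma \ref{lm:FHC} plus the normalization for uniqueness. The only cosmetic differences are that the paper obtains the equation by noting that $u$ is an \emph{unconstrained} minimizer of the nonnegative functional $\mathcal{F}(\varphi)=\frac{1}{p}[\varphi]^p_{W^{s,p}(\mathbb{R}^N)}-\frac{\mathfrak{h}_{s,p}(\Omega)}{p}\int_\Omega |\varphi|^p\,d_\Omega^{-s\,p}\,dx$, which vanishes at $u$ (thus avoiding the differentiability-of-the-quotient estimates you sketch), and that it explicitly treats the case of a disconnected $\Omega$ in the positivity step (using the nonlocality, as in \cite{BP}), a point your appeal to the minimum principle should also record.
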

\begin{proof}
Let us suppose that \eqref{hardy!} admits a minimizer $u\in\widetilde{W}^{s,p}_0(\Omega)$, in particular this implies that $\mathfrak{h}_{s,p}(\Omega)>0$. 	We observe that 
	\[
	\Big||a|-|b|\Big|\le |a-b|,\qquad \mbox{ for every } a,b\in\mathbb{R},
	\]
	and the inequality is strict, whenever $a\,b<0$. This yields 
\[
\mathfrak{h}_{s,p}(\Omega)\le \big[|u|\big]^p_{W^{s,p}(\mathbb{R})}\le [u]_{W^{s,p}(\mathbb{R})}^p=\mathfrak{h}_{s,p}(\Omega),
\]
and thus it must result 
\[
u(x)\,u(y)\ge 0,\qquad \mbox{ for a.\,e. }(x,y)\in\mathbb{R}^N\times\mathbb{R}^N.
\]
This shows that $u$ has constant sign almost everywhere in $\Omega$. Without loss of generality, we can suppose that $u$ is non-negative.
\par
We then observe that $u$ must be a minimizer of the following functional
\[
	\mathcal{F}(\varphi) := \frac{1}{p} \, [ \varphi ]_{W^{s,p}(\mathbb{R}^N)}^p - \frac{\mathfrak{h}_{s,p}(\Omega)}{p} \int_{\Omega} \frac{|\varphi|^p}{d_{\Omega}^{s\,p}} \, dx, \qquad \text{ for every } \varphi \in \widetilde{W}^{s,p}_0(\Omega),
\]
	as well.
Indeed, by definition of $\mathfrak{h}_{s,p}(\Omega)$, we have $\mathcal{F}(\varphi) \ge 0$ for every admissible function and $\mathcal{F}(u)=0$. Moreover, $u$ is non-trivial, due to the normalization on the weighted $L^p$ norm.
\par
By minimality, we get that $u$ must be a non-trivial non-negative weak solution of the Euler-Lagrange equation, which is given by \eqref{equazione} with $\lambda=\mathfrak{h}_{s,p}(\Omega)$.
By the minimum principle (see \cite[Theorem A.1]{BF}), we directly obtain that $u >0$ almost everywhere in $\Omega$, if the latter is connected. If $\Omega$ has more than one connected component, the same conclusion can be drawn by proceeding as in \cite[Proposition 2.6]{BP}, thanks to the nonlocality of the operator.
\par
We now show the uniqueness for the positive minimizer of $\mathfrak{h}_{s,p}(\Omega)$. For this, it is sufficient to exploit Lemma \ref{lm:FHC}.
Let us take $u, v \in \widetilde{W}^{s,p}_0(\Omega)$ two positive minimizers of $\mathfrak{h}_{s,p}(\Omega)$ and set
	\[ 
	\sigma= \left( \frac{1}{2}\, u^p + \frac{1}{2}\, v^p \right)^\frac{1}{p}, 
	\]
Thanks to \eqref{eq:hiddenconvexity}, we get that $\sigma\in\widetilde{W}^{s,p}_0(\Omega)$ is still a minimizer for $\mathfrak{h}_{s,p}(\Omega)$. 
	Thus \eqref{eq:hiddenconvexity} holds as an identity. By Lemma \ref{lm:FHC}, this means that
	there exists a constant $C$ such that
	\[ 
	u=C\,v,\qquad \text{ a.\,e. in } \Omega.
	\]  
	Finally, the normalization on the weighted norm implies that $C=1$. This concludes the proof.	
\end{proof}
\begin{oss}
In the local case, the uniqueness of an extremal for $\mathfrak{h}_{s,p}$ (provided it exists) can be found for example in \cite[Proposition 3.2]{MS}. Differently from \cite{MS}, here we found useful to rely on a hidden convexity argument, rather than on Picone's inequality.
\end{oss}

\begin{defi}
Let $\Omega\subsetneq\mathbb{R}^N$ be an open set. We say that $\partial\Omega$ is {\it locally continuous at} $x_0\in\partial\Omega$ if there exist:
\begin{itemize}
\item an open $N-$dimensional hyper-rectangle $Q_{\delta_0,\delta_1}$ centered at the origin, defined by
\[
Q_{\delta_0,\delta_1}=(-\delta_0,\delta_0)^{N-1}\times(-\delta_1,\delta_1),\qquad \mbox{ with } \delta_0,\delta_1>0;
\]
\vskip.2cm
\item a linear isometry $\mathcal O:\mathbb{R}^N\to\mathbb{R}^N$ such that $\mathcal{O}(x_0)=0$;
\vskip.2cm
\item a continuous function $\Psi:(-\delta_0,\delta_0)^{N-1}\to(-\delta_1,\delta_1)$;
\end{itemize}
such that
\[
Q_{\delta_0,\delta_1}\cap \mathcal{O}(\Omega)=\Big\{x=(x',x_N)\in Q_{\delta_0,\delta_1}\, :\, \Psi(x')< x_N< \delta_1\Big\},
\]
and
\[
Q_{\delta_0,\delta_1}\cap \mathcal{O}(\partial\Omega)=\Big\{x=(x',x_N)\in Q_{\delta_0,\delta_1}\, :\, x_N=\Psi(x')\Big\}.
\]
Roughly speaking, this means that $\partial\Omega$ coincides with the graph of a continuous function, in a small rectangular neighborhood of $x_0$.
\end{defi}

\begin{prop}
\label{prop:nonne}
	Let $1<p<\infty$, $0<s<1$ and let $\Omega\subsetneq\mathbb{R}^N$ be an open set, which is locally continuous at a point $x_0\in\partial\Omega$. Let us suppose that there exists a positive local weak supersolution $u$ of \eqref{equazione} with $\lambda=\mathfrak{h}_{s,p}(\Omega)$, such that
	\begin{equation}
	\label{dalbasso}
	u\ge \frac{1}{C}\, d_\Omega^\frac{s\,p-1}{p},\qquad \mbox{ in }\Omega.
	\end{equation}
	Then the infimum $\mathfrak{h}_{s,p}(\Omega)$ is not attained.
\end{prop}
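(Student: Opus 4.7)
The plan is to argue by contradiction: assume that the infimum is attained by some non-trivial $v\in \widetilde{W}^{s,p}_0(\Omega)$. By Lemma \ref{lm:basics}, we may take $v>0$ almost everywhere in $\Omega$, and $v$ is a positive local weak solution of \eqref{equazione} with $\lambda=\mathfrak{h}_{s,p}(\Omega)$, normalized by $\int_\Omega v^p\,d_\Omega^{-s\,p}\,dx=1$.

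The core of the proof is a coupling between the supersolution $u$ and the minimizer $v$ via the discrete Picone inequality (Lemma \ref{lm:picone}). More precisely, fix $\varepsilon>0$ and approximate $v$ in $W^{s,p}(\mathbb{R}^N)$ by a sequence $\{v_n\}\subset C^\infty_0(\Omega)$. Set
\[
\varphi_n:=\frac{v_n^p}{(u+\varepsilon)^{p-1}},
\]
which is non-negative and compactly supported in $\Omega$ (with Sobolev regularity on $\mathbb{R}^N$ verifiable by a Leibniz-type argument as in the proof of Lemma \ref{lm:ammissibili1}, exploiting that $u+\varepsilon\ge\varepsilon$ is bounded away from zero). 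Testing the supersolution inequality for $u$ against $\varphi_n$ and applying Picone's inequality pointwise to the pair $(u+\varepsilon, v_n)$, one obtains
\[
\mathfrak{h}_{s,p}(\Omega)\,\int_\Omega \frac{v_n(x)^p}{d_\Omega(x)^{s\,p}}\,\left(\frac{u(x)}{u(x)+\varepsilon}\right)^{p-1}dx\le [v_n]^p_{W^{s,p}(\mathbb{R}^N)}.
\]
Passing to the limit first as $\varepsilon\to 0$ (by monotone convergence) and then as $n\to \infty$ (using the convergence of $v_n$ in $W^{s,p}(\mathbb{R}^N)$ together with the Hardy inequality, which ensures convergence in the weighted $L^p$ space), we deduce that
\[
\mathfrak{h}_{s,p}(\Omega)\,\int_\Omega \frac{v^p}{d_\Omega^{s\,p}}\,dx\le [v]^p_{W^{s,p}(\mathbb{R}^N)}.
\]
By minimality of $v$ this is an equality, hence the Picone inequality must hold as an equality pointwise almost everywhere. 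The equality statement in Lemma \ref{lm:picone} then forces the ratio $v_n/(u+\varepsilon)$ to be essentially constant on $\Omega$, and a limiting argument as $\varepsilon\to 0$ and $n\to\infty$ yields the rigidity
\[
v=c\,u,\qquad \text{almost everywhere in }\Omega,
\]
for some constant $c>0$.

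The final step is to derive a contradiction from this rigidity, using the pointwise lower bound \eqref{dalbasso}:
\[
1=\int_\Omega \frac{v^p}{d_\Omega^{s\,p}}\,dx=c^p\int_\Omega \frac{u^p}{d_\Omega^{s\,p}}\,dx\ge \frac{c^p}{C^p}\,\int_\Omega \frac{d_\Omega^{s\,p-1}}{d_\Omega^{s\,p}}\,dx=\frac{c^p}{C^p}\,\int_\Omega \frac{dx}{d_\Omega}.
\]
It then suffices to show that $1/d_\Omega \notin L^1(\Omega)$. This is precisely what the local continuity assumption at $x_0$ provides: in the rectangular neighborhood $Q_{\delta_0,\delta_1}$ associated with $x_0$ (after the isometry $\mathcal{O}$) one has $d_\Omega(x',x_N)\le x_N-\Psi(x')$ in the portion of $\Omega$ represented as a subgraph, so Fubini gives
\[
\int_\Omega \frac{dx}{d_\Omega}\ge \int_{(-\delta_0,\delta_0)^{N-1}}\left(\int_{\Psi(x')}^{\delta_1}\frac{dx_N}{x_N-\Psi(x')}\right)dx'=+\infty,
\]
the inner integral diverging logarithmically at the lower endpoint. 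This contradicts the finite bound above and concludes the proof.

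The main technical obstacle I expect is the Picone-rigidity step: both verifying that $\varphi_n$ is an admissible test function in the weak formulation of the supersolution inequality, and justifying the two successive passages to the limit on the left-hand side of the double integral. These are the standard dominated convergence arguments, relying on the uniform bound of $[v_n]_{W^{s,p}(\mathbb{R}^N)}$ and on Picone's inequality itself, applied together with the strict monotonicity of the map $\varepsilon\mapsto u/(u+\varepsilon)$. The remaining logarithmic divergence argument is elementary once local continuity is invoked.
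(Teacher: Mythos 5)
Your overall strategy coincides with the paper's: couple the supersolution $u$ with a putative minimizer $v$ through the discrete Picone inequality, deduce the rigidity $v=c\,u$, and then contradict the finiteness of the weighted norm by combining \eqref{dalbasso} with $1/d_\Omega\notin L^1(\Omega)$, which you prove exactly as the paper does from the local continuity of $\partial\Omega$ at $x_0$. The $\varepsilon$-regularization $v_n^p/(u+\varepsilon)^{p-1}$ is harmless but unnecessary: on the compact support of $v_n$ the bound \eqref{dalbasso} already gives $u\ge \delta>0$, so the test function $v_n^p/u^{p-1}$ is admissible directly, via Lemma \ref{lm:ammissibili1}; this is how the paper proceeds, and it spares you one passage to the limit.

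The genuine gap is in the rigidity step. After you discard the Picone deficit and let $\varepsilon\to 0$ and $n\to\infty$, the surviving inequality $\mathfrak{h}_{s,p}(\Omega)\int_\Omega v^p\,d_\Omega^{-s\,p}\,dx\le [v]^p_{W^{s,p}(\mathbb{R}^N)}$ is nothing but Hardy's inequality, which the minimizer saturates trivially; equality there gives no pointwise information, and at fixed $n$ and $\varepsilon$ Picone certainly does \emph{not} hold with equality, since equality would force $v_n/(u+\varepsilon)$ to be constant, which is impossible for a compactly supported $v_n\not\equiv 0$ against $u+\varepsilon\ge\varepsilon$. So the sentence ``hence the Picone inequality must hold as an equality pointwise almost everywhere'' does not follow from what precedes it. The correct implementation keeps the nonnegative remainder explicit: set $\mathcal{R}(v_n,u)=|v_n(x)-v_n(y)|^p-J_p(u(x)-u(y))\,\bigl(\varphi_n(x)-\varphi_n(y)\bigr)\ge 0$, rewrite the tested supersolution inequality as $\mathfrak{h}_{s,p}(\Omega)\int_\Omega v_n^p\,d_\Omega^{-s\,p}\,dx+\iint \mathcal{R}(v_n,u)\,|x-y|^{-N-s\,p}\,dx\,dy\le [v_n]^p_{W^{s,p}(\mathbb{R}^N)}$, pass to the limit using Fatou's Lemma on the remainder (Fatou also suffices for the weighted term, which sits on the favorable side), and deduce from the minimality of $v$ that $\iint \mathcal{R}(v,u)\,|x-y|^{-N-s\,p}\,dx\,dy=0$; only then does the equality case of Lemma \ref{lm:picone}, applied to the limit pair $(u,v)$, give $u=C\,v$ a.e. in $\Omega$. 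This is precisely the paper's argument. Once rigidity is in place, your concluding computation (the normalization against $\int_\Omega d_\Omega^{-1}\,dx=+\infty$) is correct and coincides with the paper's.
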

\begin{proof}
	We first show that for such a set, we have 
\begin{equation}
\label{nonL1}
1/d_\Omega\not\in L^1(\Omega).
\end{equation}
At this aim, we can assume without loss of generality that 
\[
x_0=(0,\dots,0)\qquad \mbox{ and }\qquad \mathcal{O}=\mathrm{Id},
\]
so that 
\[
Q_{\delta_0,\delta_1}(x_0)\cap \Omega=\Big\{x=(x',x_N)\in Q_{\delta_0,\delta_1}(x_0)\, :\, \Psi(x')< x_N< \delta_1\Big\}.
\]
We then observe that (see Figure \ref{fig:continuity})
\[
d_\Omega(x)\le |x_N-\Psi(x')|=(x_N-\Psi(x')),\qquad \mbox{ for every } x=(x',x_N)\in Q_{\delta_0,\delta_1}(x_0)\cap \Omega.
\]
\begin{figure}
\includegraphics[scale=.35]{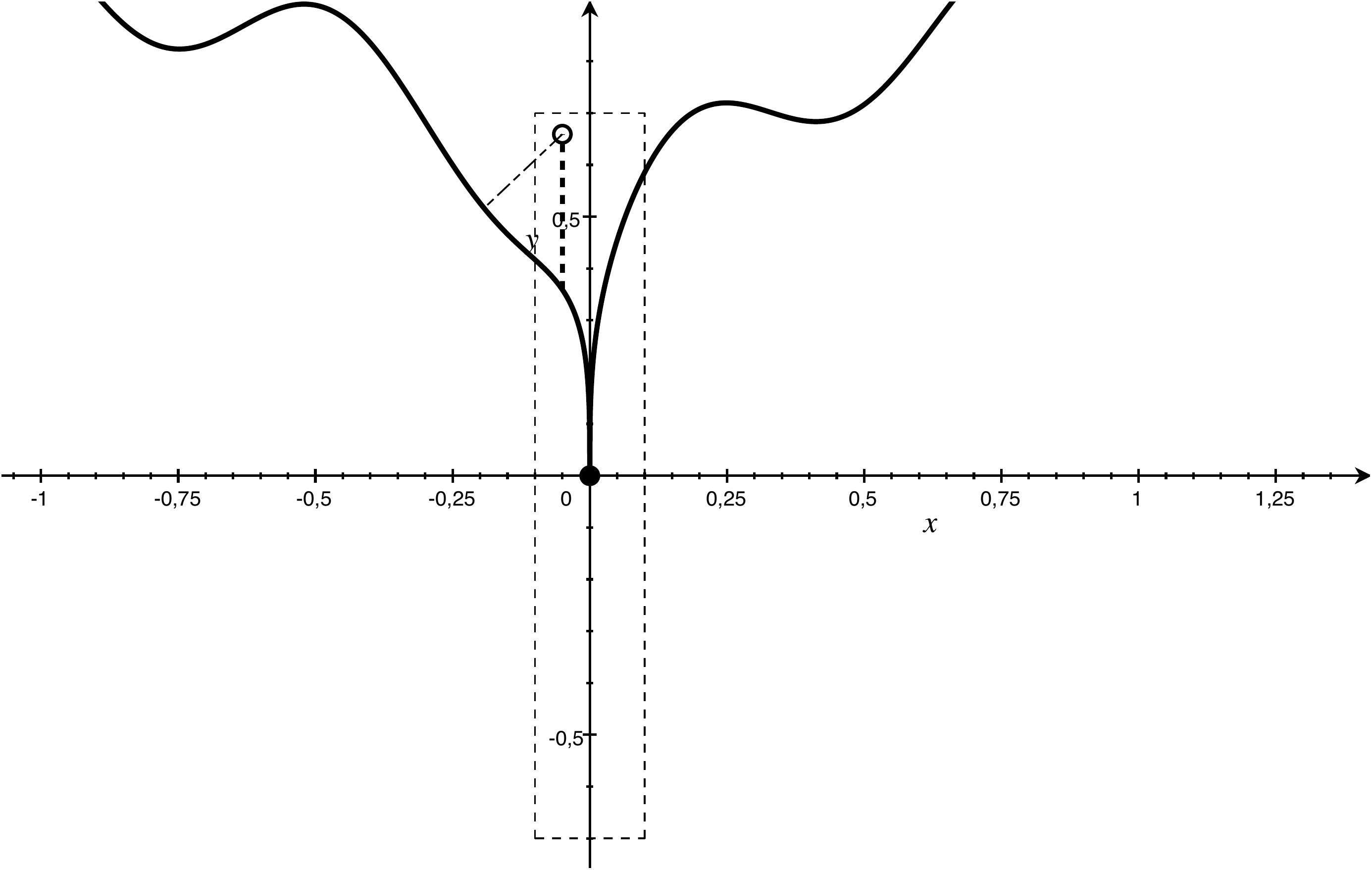}
\caption{For $(x',x_N)$ around a continuity point for the boundary, the ``vertical'' distance $x_N-\Psi(x')$ (in bold dashed line) is always larger than its distance from the boundary.}
\label{fig:continuity}
\end{figure}
This implies that 
\[
\int_\Omega \frac{1}{d_\Omega}\,dx\ge \int_{Q_{\delta_0,\delta_1}(x_0)\cap \Omega} \frac{1}{d_\Omega}\,dx\ge \int_{(-\delta_0,\delta_0)^{N-1}}\left(\int_{\Psi(x')}^{\delta_1} \frac{1}{x_N-\Psi(x')}\,dx_N\right)\,dx'.
\]
By observing that the last integral is diverging, we get \eqref{nonL1}.
\par
We now argue by contradiction and suppose that $v\in \widetilde{W}^{s,p}_0(\Omega)$ is a minimizer for $\mathfrak{h}_{s,p}(\Omega)$. This in particular implies that $\mathfrak{h}_{s,p}(\Omega)>0$. By Lemma \ref{lm:basics}, we can suppose that $v$ is positive. We then take a sequence $\{v_n\}_{n\in\mathbb{N}}\subset C^\infty_0(\Omega)$ approximating $v$ in $W^{s,p}(\mathbb{R}^N)$. Without loss of generality, we can take each $v_n$ to be non-negative and suppose that they converge to $v$ almost everywhere, as well.
	We then insert in the weak formulation of the equation for $u$ the test function
	\[
	\varphi=\frac{v_n^p}{u^{p-1}},
	\]
which is admissible thanks to Lemma \ref{lm:ammissibili1} and \eqref{dalbasso}. This leads to
	\begin{equation}
	\label{eq:stp}
	\begin{split}
	\iint_{\mathbb{R}^N\times\mathbb{R}^N}\frac{J_p(u(x)-u(y))}{|x-y|^{N+s\,p}}\,\left(\frac{v_n(x)^p}{u(x)^{p-1}}-\frac{v_n(y)^p}{u(y)^{p-1}}\right)\,dx\,dy\ge \mathfrak{h}_{s,p}(\Omega)\,\int_\Omega \frac{v_n^p}{d_\Omega^{s\,p}}\,dx.
	\end{split}
	\end{equation}
We now set
\[
\mathcal{R}(v_n,u):=|v_n(x)-v_n(y)|^p-J_p(u(x)-u(y))\,\left(\frac{v_n(x)^p}{u(x)^{p-1}}-\frac{v_n(y)^p}{u(y)^{p-1}}\right),
\]	
and observe that by Lemma \ref{lm:picone} this is always a non-negative quantity. With the previous notation, equation \eqref{eq:stp} can be rewritten as
\[
\mathfrak{h}_{s,p}(\Omega)\,\int_\Omega \frac{v_n^p}{d_\Omega^{s\,p}}\,dx+\iint_{\mathbb{R}^N\times\mathbb{R}^N} \frac{\mathcal{R}(v_n,u)}{|x-y|^{N+s\,p}}\,dx\,dy\le [v_n]^p_{W^{s,p}(\mathbb{R}^N)}.
\]
We now pass to the limit in the previous estimate and use Fatou's Lemma on the second term on the left-hand side: this yields
\[
\mathfrak{h}_{s,p}(\Omega)\,\int_\Omega \frac{v^p}{d_\Omega^{s\,p}}\,dx+\iint_{\mathbb{R}^N\times\mathbb{R}^N} \frac{\mathcal{R}(v,u)}{|x-y|^{N+s\,p}}\,dx\,dy\le [v]^p_{W^{s,p}(\mathbb{R}^N)}.
\]
By recalling that $v$ solves \eqref{hardy!}, the previous inequality gives
\[
\iint_{\mathbb{R}^N\times\mathbb{R}^N} \frac{\mathcal{R}(v,u)}{|x-y|^{N+s\,p}}\,dx\,dy=0.
\]
Since by Lemma \ref{lm:picone} we have $\mathcal{R}(v,u)\ge 0$ almost everywhere, this in turn implies that 
\[
0=\mathcal{R}(v,u)=|v(x)-v(y)|^p-J_p(u(x)-u(y))\,\left(\frac{v(x)^p}{u(x)^{p-1}}-\frac{v(y)^p}{u(y)^{p-1}}\right),\quad \mbox{ for a.\,e. } (x,y)\in \Omega\times\Omega.
\] 
By using the equality cases in the discrete Picone inequality, it follows that there exists a constant $C>0$ such that 
\[
u=C\,v,\qquad \mbox{ a.\,e. in }\Omega.
\]
This fact and the assumption \eqref{dalbasso} imply in particular that
	\[
	v\ge \frac{1}{C}\,d_\Omega^{\frac{sp-1}{p}},\qquad\mbox{ in } \Omega,
	\]
	possibly for a different constant $C>0$.
	By minimality of $v$, it follows 
	\[
	\begin{split}
	+\infty>[v]^p_{W^{s,p}(\mathbb{R}^N)}&= \mathfrak{h}_{s,p}(\Omega)\,\int_\Omega\frac{|v|^p}{d_\Omega^{s\,p}}\,dx\ge\frac{\mathfrak{h}_{s,p}(\Omega)}{C^p}\,\int_{\Omega}\frac{1}{d_\Omega}\,dx.
	\end{split}
	\]
This finally gives a contradiction with \eqref{nonL1}.	
\end{proof}

\section{Construction of supersolutions in dimension $1$}
\label{sec:4}

\subsection{The half-line}
In what follows, for $t>0$ we use the notation 
\begin{equation}
\label{intervallino}
I_\varepsilon(t):=\Big((1-\varepsilon)\,t,(1+\varepsilon)\,t\Big),\qquad \mbox{ for } 0<\varepsilon\ll 1.
\end{equation}
We still use the notation $\mathbb{H}^1_+=(0,+\infty)$.
Let $\beta\in\mathbb{R}$,
we set 
\[
U_\beta(t):=t^\beta,\qquad \mbox{ for } t\in\mathbb{H}^1_+,
\]
and extend it by $0$ to the complement of $\mathbb{H}^1_+$. In particular, in the borderline case $\beta=0$, this has to be intended as the characteristic function of $\mathbb{H}^1_+$. 
\par
The next result collects some properties of $U_\beta$ which will be useful in the sequel.
\begin{lm}
\label{lm:potenzesobolev}
Let $1<p<\infty$ and $0<s<1$. For every $\beta\in \mathbb{R}$
we have $U_\beta\in W^{s,p}_{\rm loc}(\mathbb{H}^1_+)$.
Moreover, $U_\beta$ has the following further properties:
\begin{itemize}
\item  for  
\[
\frac{s\,p-1}{p}<\beta,
\]
we have $U_\beta\in W^{s,p}((0,M))$, for every $M>0$;
\vskip.2cm
\item for
\[
-\frac{1}{p-1}<\beta<\frac{s\,p}{p-1},
\] 
we have $U_\beta\in L^{p-1}_{s\,p}(\mathbb{R})$. 
\end{itemize}
\end{lm}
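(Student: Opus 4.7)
My plan is to handle the three assertions separately by direct integration: the first is essentially trivial, the second reduces to a one-dimensional scaling computation, and the third is a routine integrability check at $0$ and at $\infty$.

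For the local assertion, any $\Omega'\Subset\mathbb{H}^1_+$ is contained in some compact interval $[a,b]\subset(0,+\infty)$ on which $U_\beta=t^\beta$ is smooth, hence Lipschitz. The $L^p$ norm is obviously finite, and the Gagliardo seminorm of a Lipschitz function on a bounded interval is controlled by splitting the double integral into a ``diagonal'' part bounded by $\|U_\beta'\|_{L^\infty([a,b])}^p\int_0^{b-a}r^{p-1-s\,p}\,dr$ (finite because $s<1$) and an ``off-diagonal'' part bounded by a constant multiple of $\|U_\beta\|_{L^\infty([a,b])}^p\int_\delta^{+\infty}r^{-1-s\,p}\,dr$. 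Hence $U_\beta\in W^{s,p}(\Omega')$ for every $\Omega'\Subset\mathbb{H}^1_+$.

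For the second assertion, the $L^p((0,M))$ norm equals $\int_0^M t^{p\,\beta}\,dt$, which is finite since the hypothesis $\beta>(s\,p-1)/p$ forces $p\,\beta>s\,p-1>-1$ (using $s>0$). For the Gagliardo seminorm, the natural move is to exploit symmetry and perform the substitution $\tau=\sigma\,t$ on $\{0<\tau<t<M\}$, which factorizes the integral as
\[
[U_\beta]^p_{W^{s,p}((0,M))}=2\left(\int_0^M t^{p\,\beta-s\,p}\,dt\right)\left(\int_0^1 \frac{|1-\sigma^\beta|^p}{(1-\sigma)^{1+s\,p}}\,d\sigma\right).
\]
The first factor is finite precisely when $\beta>(s\,p-1)/p$. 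For the second, the endpoint $\sigma\to 1^-$ is controlled via Lemma \ref{lm:lemmino}, which yields $|1-\sigma^\beta|\le C\,(1-\sigma)$ locally, producing an integrand of order $(1-\sigma)^{p\,(1-s)-1}$, integrable since $s<1$. At $\sigma\to 0^+$ the integrand is bounded when $\beta\ge 0$, and behaves like $\sigma^{p\,\beta}$ when $\beta<0$, which is integrable because the hypothesis forces $\beta>(s\,p-1)/p>-1/p$.

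For the third assertion, a direct change of variable gives
\[
\int_{\mathbb{R}}\frac{|U_\beta(x)|^{p-1}}{(1+|x|)^{1+s\,p}}\,dx=\int_0^{+\infty}\frac{t^{(p-1)\,\beta}}{(1+t)^{1+s\,p}}\,dt,
\]
which converges at $t=0$ iff $(p-1)\,\beta>-1$ and at $t=+\infty$ iff $(p-1)\,\beta-(1+s\,p)<-1$, yielding precisely the stated range $-1/(p-1)<\beta<s\,p/(p-1)$. The only mild subtlety in the whole argument is checking, in the second item, that the $t$-integral constraint $\beta>(s\,p-1)/p$ is strictly stronger than the $\sigma$-integral constraint $\beta>-1/p$ in the negative-$\beta$ regime, so that the hypothesis suffices on its own; all remaining steps are routine.
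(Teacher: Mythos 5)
Your argument is correct, and on the central item (membership in $W^{s,p}((0,M))$) it takes a genuinely different, and leaner, route than the paper. The paper performs the change of variable $y=\tau\,t$ over the whole square $(\varepsilon,M)\times(\varepsilon,M)$, so the inner integral runs over $\tau$ up to $M/t$; integrability of the tail $\tau>1$ forces the restriction $\beta<s$, which is why the paper treats $\beta>s$ separately (via H\"older/Lipschitz continuity of $t^\beta$ on $[0,M]$) and needs a dedicated logarithmic estimate for the borderline case $\beta=s$, together with an $\varepsilon$-truncation. You instead symmetrize first, reducing to the triangle $\{0<y<t<M\}$, and the substitution $y=\sigma\,t$ then yields the exact factorization
\[
[U_\beta]^p_{W^{s,p}((0,M))}=2\,\left(\int_0^M t^{\beta\,p-s\,p}\,dt\right)\,\left(\int_0^1\frac{|1-\sigma^\beta|^p}{(1-\sigma)^{1+s\,p}}\,d\sigma\right),
\]
valid by Tonelli's theorem (both sides possibly infinite) for every $\beta$: finiteness of the $t$-factor is precisely $\beta>(s\,p-1)/p$, while the $\sigma$-factor is finite for every $\beta>-1/p$ (endpoint $\sigma\to 1^-$ via Lemma \ref{lm:lemmino}, endpoint $\sigma\to 0^+$ by the behaviour $\sigma^{\beta\,p}$), and your hypothesis implies $\beta>-1/p$ since $s>0$; hence no case distinction and no borderline case arise. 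As a bonus, this identity recovers (with a slightly different but equally usable constant) the quantitative estimate \eqref{stima1dsharp} recorded in Remark \ref{oss:miserve!}, which the paper extracts from its computation and exploits later in the sharpness part of Theorem \ref{teo:half}; your constant $2\int_0^1|1-\sigma^\beta|^p\,(1-\sigma)^{-1-s\,p}\,d\sigma$ has the same limit as $\beta\to\left(\frac{s\,p-1}{p}\right)^+$, so it would serve that purpose just as well. The other two items of your proof (local Lipschitz character giving $W^{s,p}_{\rm loc}(\mathbb{H}^1_+)$, and the explicit weighted integral giving the $L^{p-1}_{s\,p}(\mathbb{R})$ range) coincide with the paper's argument.
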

\begin{proof}
We observe that $U_\beta$ is locally Lipschitz on $\mathbb{H}^1_+$, for every $\beta\in\mathbb{R}$. This easily implies that $U_\beta\in W^{s,p}_{\rm loc}(\mathbb{H}^1_+)$.
\vskip.2cm\noindent
Let us now suppose that $\beta>(s\,p-1)/p$. From the fact that $U_\beta\in W^{s,p}_{\rm loc}(\mathbb{H}^1_+)$, we get that for every $0<\varepsilon<M$ we have 
\[
\int_{\varepsilon}^M\int_{\varepsilon}^M \frac{|U_\beta(t)-U_\beta(y)|^p}{|t-y|^{1+s\,p}}\,dt\,dy<+\infty.
\]
We show that this is uniformly bounded with respect to $\varepsilon$. For $\beta>s$ this is straightforward, it is sufficient to use that $U_\beta$ is either $\beta-$H\"older continuous (for $s<\beta<1$) or even Lipschitz continuous (for $\beta\ge 1$) on $[0,M]$.
\par
We thus assume $(s\,p-1)/p<\beta\le s$. By using the definition of $U_\beta$, Fubini's Theorem and the change of variable $y=\tau\,t$, we get
\begin{equation}
\label{babbuaggini}
\begin{split}
\int_{\varepsilon}^M\int_{\varepsilon}^M \frac{|U_\beta(t)-U_\beta(y)|^p}{|t-y|^{1+s\,p}}\,dt\,dy&=\int_{\varepsilon}^M\left(\int_\frac{\varepsilon}{t}^\frac{M}{t} \frac{|1-\tau^\beta|^p}{|1-\tau|^{1+s\,p}}\,d\tau\right)\,t^{\beta\,p-s\,p}\,dt\\
&= \int_{\varepsilon}^M\left(\int_\frac{\varepsilon}{t}^1 \frac{|1-\tau^\beta|^p}{|1-\tau|^{1+s\,p}}\,d\tau\right)\,t^{\beta\,p-s\,p}\,dt\\
&+\int_{\varepsilon}^M\left(\int_1^\frac{M}{t} \frac{|1-\tau^\beta|^p}{|1-\tau|^{1+s\,p}}\,d\tau\right)\,t^{\beta\,p-s\,p}\,dt.
\end{split}
\end{equation}
We now observe that 
\[
\int_\frac{\varepsilon}{t}^1 \frac{|1-\tau^\beta|^p}{|1-\tau|^{1+s\,p}}\,d\tau\le \int_0^1 \frac{|1-\tau^\beta|^p}{|1-\tau|^{1+s\,p}}\,d\tau<+\infty.
\]
For second integral, we observe that 
\[
\frac{|1-\tau^\beta|^p}{|1-\tau|^{1+s\,p}}\sim \frac{1}{\tau^{1+s\,p-\beta\,p}},\qquad \mbox{ for } \tau\to +\infty,
\]
and the last function is integrable on $[1,+\infty)$, for $\beta<s$. Thus we get 
\[
\int_1^\frac{M}{t} \frac{|1-\tau^\beta|^p}{|1-\tau|^{1+s\,p}}\,d\tau\le \int_1^{+\infty} \frac{|1-\tau^\beta|^p}{|1-\tau|^{1+s\,p}}\,d\tau<+\infty.
\]
This discussion entails that 
\[
\int_{\varepsilon}^M\int_{\varepsilon}^M \frac{|U_\beta(t)-U_\beta(y)|^p}{|t-y|^{1+s\,p}}\,dt\,dy\le C\,\int_{\varepsilon}^Mt^{\beta\,p-s\,p}\,dt=C\,\frac{M^{\beta\,p-s\,p+1}-\varepsilon^{\beta\,p-s\,p+1}}{\beta\,p-s\,p+1},
\]
and the last quantity is bounded as $\varepsilon$ goes to $0$, thanks to the fact that $\beta>(s\,p-1)/p$. We thus proved the claimed property of $U_\beta$, for $(s\,p-1)/p<\beta<s$.
\par
We still miss the borderline case $\beta=s$. From \eqref{babbuaggini}, we can infer
\[
\begin{split}
\int_{\varepsilon}^M\int_{\varepsilon}^M \frac{|U_s(t)-U_s(y)|^p}{|t-y|^{1+s\,p}}\,dt\,dy&\le \int_{\varepsilon}^M\left(\int_{0}^1 \frac{|1-\tau^s|^p}{|1-\tau|^{1+s\,p}}\,d\tau\right)\,dt+\int_{\varepsilon}^M\left(\int_1^\frac{M}{t} \frac{|1-\tau^s|^p}{|1-\tau|^{1+s\,p}}\,d\tau\right)\,dt.
\end{split}
\]
The first integral on the right-hand side is uniformly bounded in $\varepsilon$, but now we have to pay attention to the fact that 
\[
\lim_{t\to 0^+}\int_1^\frac{M}{t} \frac{|1-\tau^s|^p}{|1-\tau|^{1+s\,p}}\,d\tau=+\infty.
\]
We can proceed as follows: we write
\[
\begin{split}
\int_{\varepsilon}^M\left(\int_1^\frac{M}{t} \frac{|1-\tau^s|^p}{|1-\tau|^{1+s\,p}}\,d\tau\right)\,dt&=\int_{\varepsilon}^\frac{M}{2}\left(\int_1^\frac{M}{t} \frac{|1-\tau^s|^p}{|1-\tau|^{1+s\,p}}\,d\tau\right)\,dt+\int_\frac{M}{2}^M\left(\int_1^\frac{M}{t} \frac{|1-\tau^s|^p}{|1-\tau|^{1+s\,p}}\,d\tau\right)\,dt
\end{split}
\]
and observe that for $0<t<M/2$, we have
\[
\int_{\varepsilon}^\frac{M}{2}\left(\int_1^\frac{M}{t} \frac{|1-\tau^s|^p}{|1-\tau|^{1+s\,p}}\,d\tau\right)\,dt\le \frac{M}{2}\,\int_1^2 \frac{|1-\tau^s|^p}{|1-\tau|^{1+s\,p}}\,d\tau+\int_{\varepsilon}^\frac{M}{2}\left(\int_2^\frac{M}{t} \frac{|1-\tau^s|^p}{|1-\tau|^{1+s\,p}}\,d\tau\right)\,dt
\]
and, at last
\[
\int_{\varepsilon}^\frac{M}{2}\left(\int_2^\frac{M}{t} \frac{|1-\tau^s|^p}{|1-\tau|^{1+s\,p}}\,d\tau\right)\,dt\le 2^{1+s\,p}\,\int_{\varepsilon}^\frac{M}{2}\left(\int_2^\frac{M}{t} \tau^{p\,s-1-s\,p}\,d\tau\right)\,dt= 2^{1+s\,p}\,\int_\varepsilon^M \log\left(\frac{M}{2\,t}\right)\,dt.
\]
The last integral is uniformly bounded, as $\varepsilon$ goes to $0$. This finally proves that $U_s\in W^{s,p}((0,M))$.
\vskip.2cm\noindent
Finally, we observe that 
\[
U_\beta\in L^{p-1}_{\rm loc}(\mathbb{R})\qquad \Longleftrightarrow \qquad \beta\,(p-1)>-1,
\]
and 
\[
\int_{\mathbb{R}} \frac{U_\beta^{p-1}}{(1+|t|)^{1+s\,p}}\,dt=\int_0^{+\infty} \frac{t^{\beta\,(p-1)}}{(1+t)^{1+s\,p}}\,dt<+\infty\qquad \Longleftrightarrow \qquad -1<\beta\,(p-1)<s\,p.
\]
This concludes the proof.
\end{proof}
\begin{oss}
\label{oss:miserve!}
For later reference, we observe that in the previous proof for 
\[
\frac{s\,p-1}{p}<\beta<s,
\]
 we proved the following upper bound
\[
[U_\beta]^p_{W^{s,p}((0,M))}\le \left(\int_0^1 \frac{|1-\tau^\beta|^p}{|1-\tau|^{1+s\,p}}\,d\tau+\int_1^{+\infty} \frac{|1-\tau^\beta|^p}{|1-\tau|^{1+s\,p}}\,d\tau\right)\frac{M^{\beta\,p-s\,p+1}}{\beta\,p-s\,p+1}.
\]
By making the change of variable $\tau=1/\xi$ in the second integral, this can also be rewritten as 
\begin{equation}
\label{stima1dsharp}
[U_\beta]^p_{W^{s,p}((0,M))}\le \left(\int_0^1 \frac{|1-\tau^\beta|^p}{|1-\tau|^{1+s\,p}}\,\left(1+\tau^{s\,p-\beta\,p-1}\right)\,d\tau\right)\frac{M^{\beta\,p-s\,p+1}}{\beta\,p-s\,p+1}.
\end{equation}
\end{oss}
In the next result, we compute the fractional $p-$Laplacian of order $s$ for $U_\beta$. This generalizes \cite[Lemma 3.1]{IMS} to the case $\beta\not=s$.
\begin{prop}
\label{prop:ACF}
Let $1<p<\infty$ and $0<s<1$. For every
\[
-\frac{1}{p-1}<\beta<\frac{s\,p}{p-1},
\] 
the function $U_\beta$ is a local weak solution of \eqref{equazione} in $\mathbb{H}^1_+$, with 
\begin{equation}
\label{lambda}
\lambda=\lambda(\beta)=2\,\int_0^1 \dfrac{J_p(1-t^\beta)}{(1-t)^{1+s\,p}}\,\left(1-t^{s\,p-1-\beta\,(p-1)}\right)\,dt+\dfrac{2}{s\,p}.
\end{equation}
Moreover, if we define the family of functions on $\mathbb{H}^1_+$ by
\begin{equation}
\label{Fe}
F_\varepsilon(t)=2\,\int_{\mathbb{R}\setminus I_\varepsilon(t)} \frac{J_p(U_\beta(t)-U_\beta(y))}{|t-y|^{1+s\,p}}\,dy,\qquad \mbox{ for } 0<\varepsilon< 1,
\end{equation}
where $I_\varepsilon(t)$ is defined by \eqref{intervallino}, we get that this converges to 
\[
F_0(t)=\lambda(\beta)\,\frac{U_\beta(t)^{p-1}}{t^{s\,p}},
\]
uniformly on compact subsets of $\mathbb{H}^1_+$, as $\varepsilon$ goes to $0$.
\end{prop}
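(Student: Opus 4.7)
The plan is to exploit the scale invariance of $U_\beta$ and of the cutoff $I_\varepsilon(t)$ to reduce the computation of $F_\varepsilon(t)$ to a one-dimensional integral in a rescaled variable, and then pass to the limit $\varepsilon \to 0$. First, splitting the defining integral at $y = 0$, the piece over $(-\infty, 0)$ is elementary and equals $\frac{2}{s\,p}\,t^{\beta(p-1)-s\,p}$. For the remaining piece over $(0,\infty)\setminus I_\varepsilon(t)$, I perform the substitution $y = \tau\,t$: since $J_p$ is $(p-1)$-homogeneous and $I_\varepsilon(t) = t\cdot(1-\varepsilon, 1+\varepsilon)$, all $t$-dependence factors out, giving
\[
F_\varepsilon(t) \,=\, 2\, t^{\beta(p-1) - s\,p}\, \left[\, \frac{1}{s\,p} \,+\, \int_{(0,\infty)\setminus(1-\varepsilon,\,1+\varepsilon)} \frac{J_p(1-\tau^\beta)}{|1-\tau|^{1+s\,p}}\, d\tau\, \right].
\]
Since the bracket is independent of $t$, uniform convergence on compact subsets of $\mathbb{H}^1_+$ will follow automatically from its pointwise convergence.

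Next, I split the integral inside the bracket at $\tau = 1$ and apply the involution $\tau = 1/\xi$ to the piece on $(1+\varepsilon, \infty)$. Using the homogeneity identity $J_p(1-\xi^{-\beta}) = -\xi^{-\beta(p-1)}\,J_p(1-\xi^\beta)$, which is the trick borrowed from \cite[Appendix A]{BMS}, that piece rewrites as
\[
-\int_0^{1/(1+\varepsilon)} \frac{J_p(1-\xi^\beta)}{(1-\xi)^{1+s\,p}}\, \xi^{s\,p-1-\beta(p-1)}\, d\xi.
\]
Setting $\alpha := s\,p - 1 - \beta(p-1)$ and combining with the piece on $(0, 1-\varepsilon)$, and noting that $1/(1+\varepsilon) > 1-\varepsilon$, the bracketed integral equals
\[
\int_0^{1-\varepsilon} \frac{J_p(1-\tau^\beta)\,(1-\tau^\alpha)}{(1-\tau)^{1+s\,p}}\, d\tau \,-\, \int_{1-\varepsilon}^{1/(1+\varepsilon)} \frac{J_p(1-\tau^\beta)\,\tau^\alpha}{(1-\tau)^{1+s\,p}}\, d\tau.
\]

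The main obstacle is passing to the limit in both terms. For the first, near $\tau = 1$ one has $|J_p(1-\tau^\beta)| \le C\,(1-\tau)^{p-1}$ (via Lemma \ref{lm:lemmino}) and $|1-\tau^\alpha|\le C\,(1-\tau)$, producing an integrable singularity of order $(1-\tau)^{p-1-s\,p}$ since $s<1$; integrability at $\tau = 0$ follows from $-1/(p-1) < \beta < s\,p/(p-1)$, which forces $\alpha \in (-1, s\,p)$ and makes the integrand a non-critical power at the origin in each of the four sign cases for $\beta$ and $\alpha$. Dominated convergence then yields the limit $\int_0^1 \frac{J_p(1-\tau^\beta)(1-\tau^\alpha)}{(1-\tau)^{1+s\,p}}\, d\tau$, matching the first summand of $\lambda(\beta)/2$. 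For the error term, the interval $(1-\varepsilon, 1/(1+\varepsilon))$ has length $\varepsilon^2/(1+\varepsilon)$ and lies inside $(1-\varepsilon, 1)$; using again $|J_p(1-\tau^\beta)| \le C(1-\tau)^{p-1}$ together with a short case analysis on whether $p-1-s\,p$ is positive, zero, or negative, one obtains $|E_\varepsilon| = O(\varepsilon^{(1-s)p})$, which vanishes. Hence $F_\varepsilon(t) \to F_0(t) = \lambda(\beta)\,t^{\beta(p-1)-s\,p}$ with $\lambda(\beta)$ as in \eqref{lambda}, uniformly on compact subsets of $\mathbb{H}^1_+$.

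Finally, to deduce the weak-solution property, fix $\varphi \in C^\infty_0(\mathbb{H}^1_+)$ with support contained in a compact set $K \subset \mathbb{H}^1_+$. From $U_\beta \in L^{p-1}_{s\,p}(\mathbb{R}) \cap W^{s,p}_{\rm loc}(\mathbb{H}^1_+)$ (Lemma \ref{lm:potenzesobolev}) and the smoothness of $\varphi$, the bilinear form is absolutely integrable. Then, by antisymmetry of $J_p(U_\beta(x)-U_\beta(y))$ in $(x,y)$, dominated convergence with respect to the scale-invariant cutoff $y \notin I_\varepsilon(x)$, and Fubini applied away from the diagonal, one obtains
\[
\iint_{\mathbb{R}\times\mathbb{R}} \frac{J_p(U_\beta(x)-U_\beta(y))\,(\varphi(x)-\varphi(y))}{|x-y|^{1+s\,p}}\,dx\,dy \,=\, \lim_{\varepsilon\to 0} \int_{\mathbb{R}} \varphi(x)\,F_\varepsilon(x)\,dx.
\]
Uniform convergence $F_\varepsilon \to F_0$ on $K$ then delivers the identity $\int_{\mathbb{R}} F_0\,\varphi\, dx = \lambda(\beta)\int_{\mathbb{H}^1_+} U_\beta^{p-1}\,x^{-s\,p}\,\varphi\,dx$, which is precisely the weak formulation of \eqref{equazione} with $\lambda = \lambda(\beta)$.
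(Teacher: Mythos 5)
Your proposal is correct and follows essentially the same route as the paper's proof: split off the $(-\infty,0)$ tail, scale out $t$ via $y=\tau\,t$ thanks to the multiplicative cutoff, use the substitution $\tau\mapsto 1/\tau$ together with the homogeneity of $J_p$ (the trick from \cite[Appendix A]{BMS}) to recognize $\lambda(\beta)$ plus an error over $\left(1-\varepsilon,\tfrac{1}{1+\varepsilon}\right)$ that vanishes, and then pass to the weak formulation by dominated convergence, Fubini away from the diagonal and the uniform convergence of $F_\varepsilon$ on the support of the test function. The differences (your explicit rate $O(\varepsilon^{(1-s)\,p})$ for the error term and the explicit check of integrability at $\tau=0$) are only cosmetic refinements of the same argument.
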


\begin{proof}
Let us take $\varphi\in C^\infty_0(\mathbb{H}^1_+)$, we observe that by the Dominated Convergence Theorem we have 
\[
\begin{split}
\iint_{\mathbb{R}\times\mathbb{R}}&\frac{J_p(U_\beta(t)-U_\beta(y))\,(\varphi(t)-\varphi(y))}{|t-y|^{1+s\,p}}\,dt\,dy=\lim_{\varepsilon\to 0^+} \iint_{(\mathbb{R}\times\mathbb{R})\setminus \mathcal{O}_\varepsilon}\!\!\frac{J_p(U_\beta(t)-U_\beta(y))\,(\varphi(t)-\varphi(y))}{|t-y|^{1+s\,p}}\,dt\,dy,\\
\end{split}
\]
where
\[
\mathcal{O}_\varepsilon=\Big\{(t,y)\in\mathbb{R}\times\mathbb{R}\, :\, \min\{(1-\varepsilon)\,t, (1+\varepsilon)\,t \} \le y \le \max\{(1-\varepsilon)\,t, (1+\varepsilon)\,t \} \Big\},
\]
see Figure \ref{fig:conical}.
\begin{figure}
\includegraphics[scale=.25]{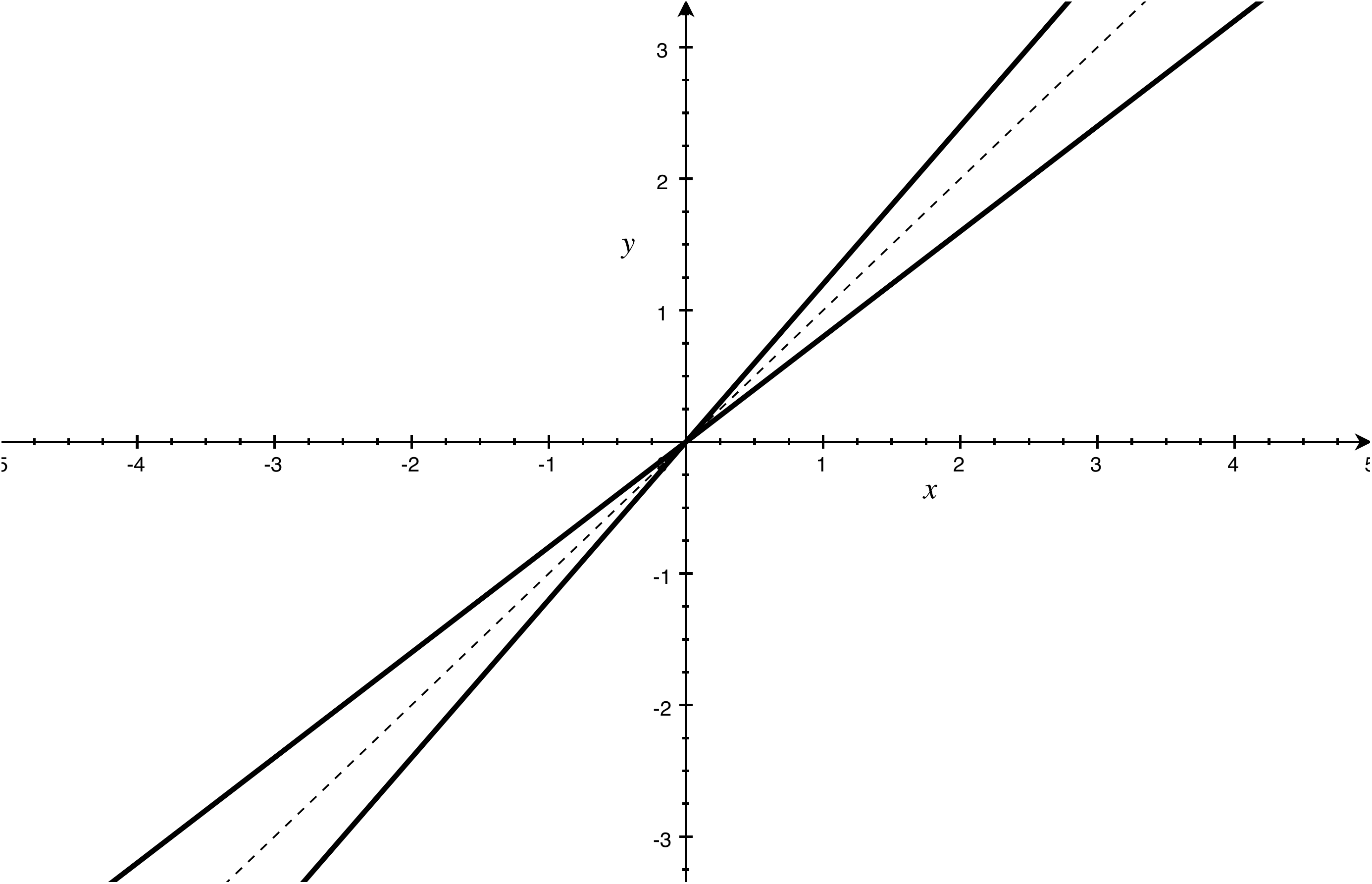}
\caption{The set $\mathcal{O}_\varepsilon$ is the conical region ``centered'' around the line $y=t$.}
\label{fig:conical}
\end{figure}
For every $0<\varepsilon<1$, by proceeding as in \cite[Lemma 2.3]{BC}, 
we have 
\[
\frac{J_p(U_\beta(t)-U_\beta(y))}{|t-y|^{1+s\,p}}\,\varphi(t)\in L^1((\mathbb{R}\times\mathbb{R})\setminus \mathcal{O}_\varepsilon).
\]
Thus we can use Fubini's Theorem and a change of variable, to write
\[
\begin{split}
\iint_{(\mathbb{R}\times\mathbb{R})\setminus \mathcal{O}_\varepsilon}&\frac{J_p(U_\beta(t)-U_\beta(y))\,(\varphi(t)-\varphi(y))}{|t-y|^{1+s\,p}}\,dt\,dy=2\,\int_{0}^{+\infty} \left(\int_{\mathbb{R}\setminus I_\varepsilon(t)}\frac{J_p(U_\beta(t)-U_\beta(y))}{|t-y|^{1+s\,p}}\,dy\right)\,\varphi(t)\,dt.\\
\end{split}
\]
Observe that we used that $\varphi$ is compactly supported on $\mathbb{H}^1_+$.
By recalling the definition \eqref{Fe}, up to now we have obtained 
\begin{equation}
\label{Fe2}
\iint_{\mathbb{R}\times\mathbb{R}}\frac{J_p(U_\beta(t)-U_\beta(y))\,(\varphi(t)-\varphi(y))}{|t-y|^{1+s\,p}}\,dt\,dy=\lim_{\varepsilon\to 0^+} \int_\mathbb{R} F_\varepsilon(t)\,\varphi(t)\,dt,
\end{equation}
for every $\varphi\in C^\infty_0(\mathbb{H}^1_+)$. We now manipulate this quantity, for a fixed $0<\varepsilon<1$: by recalling that $U_\beta$ identically vanishes in $(-\infty,0]$, for $t>0$ we have
	\[
	\begin{split}
	F_\varepsilon(t)&=2 \,\int_{\mathbb{R} \setminus I_\varepsilon(t)}\frac{J_p(U_\beta(t)-U_\beta(y))}{|t-y|^{1+s\,p}}\,dy\\
	&=2\,\int_{\mathbb{H}^1_+ \setminus I_\varepsilon(t)}\frac{J_p(t^\beta-y^\beta)}{|t-y|^{1+s\,p}}\,dy+2\,\int^0_{-\infty}\frac{t^{\beta\,(p-1)}}{|t-y|^{1+s\,p}}\,dy.
	\end{split}
	\]
	The second integral can be directly computed: this gives
	\[
\int^0_{-\infty}\frac{t^{\beta\,(p-1)}}{|t-y|^{1+s\,p}}\,dy
= \frac{1}{s\,p}\,\frac{U_\beta(t)^{p-1}}{t^{s\,p}},
	\]
	where we used the definition of $U_\beta(t)$. For the first integral in the definition of $F_\varepsilon$, by performing the change of variable $y=\tau\,t$, we obtain
	\[
	\begin{split}
	\int_{\mathbb{H}^1_+ \setminus I_\varepsilon(t)}\frac{J_p(t^\beta-y^\beta)}{|t-y|^{1+s\,p}}\,dy &=\frac{t^{\beta\,(p-1)}}{t^{s\,p}}\,\int_0^{1-\varepsilon}\frac{J_p(1-\tau^\beta)}{|1-\tau|^{1+s\,p}}\,d\tau +\frac{t^{\beta\,(p-1)}}{t^{s\,p}}\,\int_{1+\varepsilon}^{+\infty}\frac{J_p(1-\tau^\beta)}{|1-\tau|^{1+s\,p}}\,d\tau \\
	&=\frac{U_\beta(t)^{p-1}}{t^{s\,p}}\,\left(\int_0^{1-\varepsilon}\frac{J_p(1-\tau^\beta)}{|1-\tau|^{1+s\,p}}\,d\tau +\int_{1+\varepsilon}^{+\infty}\frac{J_p(1-\tau^\beta)}{|1-\tau|^{1+s\,p}}\,d\tau \right), 
	\end{split}
	\]
again thanks to the definition of $U_\beta$. Thus we have obtained 
\begin{equation}
\label{beh!}
F_\varepsilon(t)=\lambda_\varepsilon(\beta)\,\frac{U_\beta(t)^{p-1}}{t^{s\,p}},\qquad \mbox{ for every } t\in \mathbb{H}^1_+,\, 0<\varepsilon<1,
\end{equation}	
where 
\[
\lambda_\varepsilon(\beta)=2\,\int_0^{1-\varepsilon}\frac{J_p(1-\tau^\beta)}{|1-\tau|^{1+s\,p}}\,d\tau +2\,\int_{1+\varepsilon}^{+\infty}\frac{J_p(1-\tau^\beta)}{|1-\tau|^{1+s\,p}}\,d\tau +\frac{2}{s\,p}.
\]
By inserting this in \eqref{Fe2}, we have 
\begin{equation}
\label{Fe3}
\iint_{\mathbb{R}\times\mathbb{R}}\frac{J_p(U_\beta(t)-U_\beta(y))\,(\varphi(t)-\varphi(y))}{|t-y|^{1+s\,p}}\,dt\,dy=\left(\lim_{\varepsilon\to 0^+}  \lambda_\varepsilon(\beta)\right)\,\int_\mathbb{R}\frac{U_\beta(t)^{p-1}}{t^{s\,p}}\,\varphi(t)\,dt.
\end{equation}
	To conclude the proof, we only need to show that for $\lambda(\beta)$ defined by \eqref{lambda}, we have 
	\[ 
	\lambda(\beta)= \lim_{\varepsilon \to 0^+} \lambda_\varepsilon(\beta),\qquad \mbox{ for every } -\frac{1}{p-1}<\beta<\frac{s\,p}{p-1}.
	\]
We first observe that the case $\beta=0$ is simple: in this case we have 
\[
J_p(t^\beta-1)=0,\qquad \mbox{ for } t\in(0,1),
\]	
and thus we directly get 
\[
\lambda(0)=\lambda_\varepsilon(0)=\frac{2}{s\,p}.
\]
We can thus suppose that $\beta\not =0$. By recalling the definition of $\lambda_\varepsilon(\beta)$ above and performing the change of variable $\tau=1/\zeta$ in the second integral, we get for $0<\varepsilon<1$
\[
\begin{split}
\lambda_\varepsilon(\beta)&=2\,\int_0^{1-\varepsilon}\frac{J_p(1-\tau^\beta)}{|1-\tau|^{1+s\,p}}\,d\tau +2\,\int_0^\frac{1}{1+\varepsilon}\frac{J_p(1-\zeta^{-\beta})}{|1-\zeta^{-1}|^{1+s\,p}}\,\frac{d\zeta}{\zeta^2} +\frac{2}{s\,p}\\
&=2\,\int_0^{1-\varepsilon}\frac{J_p(1-\tau^\beta)}{|1-\tau|^{1+s\,p}}\,d\tau +2\,\int_0^\frac{1}{1+\varepsilon}\frac{J_p(\zeta^\beta-1)}{|\zeta-1|^{1+s\,p}}\,\zeta^{s\,p-1-\beta\,(p-1)}d\zeta+\frac{2}{s\,p}.
\end{split}
\]
By observing that 
\[
1-\varepsilon<\frac{1}{1+\varepsilon},\qquad \mbox{ for } 0<\varepsilon<1,
\]
we can write
\[
\begin{split}
\lambda_\varepsilon(\beta)&=2\,\int_0^{1-\varepsilon}\frac{J_p(1-\tau^\beta)}{|1-\tau|^{1+s\,p}}\,(1-\tau^{s\,p-1-\beta\,(p-1)})\,d\tau +2\,\int_{1-\varepsilon}^\frac{1}{1+\varepsilon}\frac{J_p(\tau^\beta-1)}{|\tau-1|^{1+s\,p}}\,\tau^{s\,p-1-\beta\,(p-1)}\,d\tau+\frac{2}{s\,p}.
\end{split}
\]
We claim that 
\begin{equation}
\label{azzero}
\lim_{\varepsilon\to 0^+} \int_{1-\varepsilon}^\frac{1}{1+\varepsilon}\frac{J_p(\tau^\beta-1)}{|\tau-1|^{1+s\,p}}\,\tau^{s\,p-1-\beta\,(p-1)}\,d\tau=0.
\end{equation}
Observe at first that for $0<\varepsilon<1/2$, we have
\[
\begin{split}
\tau^{s\,p-1-\beta\,(p-1)}&\le \max\left\{(1-\varepsilon)^{s\,p-1-\beta\,(p-1)},\left(\frac{1}{1+\varepsilon}\right)^{s\,p-1-\beta\,(p-1)}\right\}\\
&\le \max\left\{2^{-s\,p+1+\beta\,(p-1)},1\right\}=C,\qquad \mbox{ for } 1-\varepsilon<\tau<\frac{1}{1+\varepsilon}.
\end{split}
\]
Thus we get
\[
\begin{split}
\left|\int_{1-\varepsilon}^\frac{1}{1+\varepsilon}\frac{J_p(\tau^\beta-1)}{|\tau-1|^{1+s\,p}}\,\tau^{s\,p-1-\beta\,(p-1)}\,d\tau\right|&\le C\,\int_{1-\varepsilon}^\frac{1}{1+\varepsilon}\frac{|\tau^\beta-1|^{p-1}}{|\tau-1|^{1+s\,p}}\,d\tau.\\
\end{split}
\]
By using Lemma \ref{lm:lemmino}, we can further estimate for $0<\varepsilon<1/2$
\[
\begin{split}
\left|\int_{1-\varepsilon}^\frac{1}{1+\varepsilon}\frac{J_p(\tau^\beta-1)}{|\tau-1|^{1+s\,p}}\,\tau^{s\,p-1-\beta\,(p-1)}\,d\tau\right|
&\le C\,|\beta|^{p-1}\,\int_{1-\varepsilon}^\frac{1}{1+\varepsilon} \max\big\{\tau^{\beta-1},1\big\}^{p-1}\,(1-\tau)^{p-2-s\,p}\,d\tau\\
&\le C\,|\beta|^{p-1}\,\max\big\{2^{1-\beta},1\big\}^{p-1}\,\int_{1-\varepsilon}^\frac{1}{1+\varepsilon} (1-\tau)^{p-2-s\,p}\,d\tau.
\end{split}
\]
By a direct computation, we now get 
\[
\lim_{\varepsilon \to 0^+}\int_{1-\varepsilon}^\frac{1}{1+\varepsilon} (1-\tau)^{p-2-s\,p}\,d\tau=0,
\]
which in turn implies \eqref{azzero}. On the other hand, by a Taylor expansion, we have 
\[
\frac{J_p(1-\tau^\beta)}{|1-\tau|^{1+s\,p}}\,(1-\tau^{s\,p-1-\beta\,(p-1)})\sim \beta^{p-1}\,\big(s\,p-1-\beta\,(p-1)\big)\,(1-\tau)^{p\,(1-s)-1},\qquad \mbox{ for }\tau \nearrow 1^-,
\]
which shows that 
\[
\frac{J_p(1-\tau^\beta)}{|1-\tau|^{1+s\,p}}\,(1-\tau^{s\,p-1-\beta\,(p-1)})\in L^1((0,1)).
\]
These facts permit to establish that
	\[ 
	\lambda(\beta)= \lim_{\varepsilon \to 0^+} \lambda_\varepsilon(\beta),\qquad \mbox{ for every } -\frac{1}{p-1}<\beta<\frac{s\,p}{p-1},
	\]
thus from \eqref{Fe3} we get that $U_\beta$ is a local weak solution of the claimed equation.
\par
The last statement about the convergence of $F_\varepsilon$ is an easy consequence of formula \eqref{beh!}.
\end{proof}

The next result investigates some properties of the function $\lambda(\beta)$ defined in \eqref{lambda}.
This in particular permits to single out a special solution, among all the functions $U_\beta$: this corresponds to the choice
\[
\beta=\frac{s\,p-1}{p}.
\]
Indeed, for this function, the constant $\lambda$ is {\it the largest possible}. This extends to $1<p<\infty$ a similar discussion contained in the proof of \cite[Theorem 1]{BD}.
\begin{prop}
\label{lm:costantibeta}
Let $1<p<\infty$ and $0<s<1$. Let us consider the function
\[
\beta\mapsto \lambda(\beta),\qquad \mbox{ defined by \eqref{lambda} on the interval }\left(-\dfrac{1}{p-1},\dfrac{s\,p}{p-1}\right).
\]
Then this has the following properties:
\begin{enumerate}
\item it is monotonically decreasing for $\beta>(s\,p-1)/p$ and monotonically increasing for $\beta<(s\,p-1)/p$. In particular, we have
\[
\lambda(\beta)\le \lambda\left(\frac{s\,p-1}{p}\right)=2\,\int_0^1 \dfrac{\left|1-t^\frac{s\,p-1}{p}\right|^p}{(1-t)^{1+s\,p}}\,dt+\dfrac{2}{s\,p};
\]
\item there exists $\beta^*=\beta^*(s,p)$ such that
\[
-\frac{1}{p-1}<\beta^*<\frac{s\,p-1}{p}\qquad \mbox{ and }\qquad \lambda(\beta^*)=\lambda(s)=0.
\]
In particular, we have 
\[
\lambda(\beta)\ge0 \qquad \Longleftrightarrow \qquad \beta^*\le \beta\le s.
\]
\end{enumerate}
\end{prop}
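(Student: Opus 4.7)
The plan is to study $\lambda(\beta)$ by differentiating under the integral sign. Writing $\lambda(\beta) = 2\int_0^1 F(\beta,\tau)(1-\tau)^{-1-sp}\,d\tau + 2/(sp)$ with $F(\beta,\tau) = J_p(1-\tau^\beta)(1-\tau^{sp-1-\beta(p-1)})$, a short direct calculation, using $J_p(x) = |x|^{p-2}x$ and the identity $(1-a)b - a(1-b) = b - a$, will yield
\[
\partial_\beta F(\beta,\tau) = (p-1)\,\log\tau\, |1-\tau^\beta|^{p-2}\,(\tau^{sp-1-\beta(p-1)} - \tau^\beta).
\]
For $\tau \in (0,1)$, $\log \tau < 0$, and since $x \mapsto \tau^x$ is strictly decreasing, the factor $\tau^{sp-1-\beta(p-1)} - \tau^\beta$ has the opposite sign to $\beta - (sp-1)/p$. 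Consequently $\partial_\beta F$ is nonpositive for $\beta > (sp-1)/p$ and nonnegative for $\beta < (sp-1)/p$, giving the monotonicity announced in (1).

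The main obstacle will be justifying the differentiation under the integral on the whole interval $(-1/(p-1),\, sp/(p-1))$, since for $p < 2$ the factor $|1-\tau^\beta|^{p-2}$ is singular at $\tau = 1$. I would handle this by a local dominated convergence argument on each compact sub-interval of $\beta$: Taylor expanding near $\tau = 1$ gives $|1-\tau^\beta|^{p-2} \sim |\beta|^{p-2}(1-\tau)^{p-2}$, $\tau^{sp-1-\beta(p-1)} - \tau^\beta \sim -(sp-1-\beta p)(1-\tau)$, and $\log\tau \sim -(1-\tau)$, so the integrand behaves like $(1-\tau)^{p(1-s)-1}$, integrable since $s < 1$; near $\tau = 0$, the condition $\beta > -1/(p-1)$ secures integrability. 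Once monotonicity is proved, the explicit value at the critical point $\beta = (sp-1)/p$ follows by substitution, noting that at this value $sp-1-\beta(p-1) = \beta$, so the integrand collapses into $|1-\tau^\beta|^p/(1-\tau)^{1+sp}$.

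For part (2), the identity $\lambda(s) = 0$ reflects the well-known fact that $t^s$ is $(s,p)$-harmonic on the half-line (compare \cite[Lemma 3.1]{IMS}): by Proposition \ref{prop:ACF} applied with $\beta = s$, $U_s$ is a local weak solution of \eqref{equazione} with constant $\lambda(s)$, and the known harmonicity of $t^s$ on $\mathbb{H}^1_+$ forces $\lambda(s) = 0$. The existence of $\beta^*$ will then follow from the Intermediate Value Theorem: on one hand $\lambda((sp-1)/p) \geq 2/(sp) > 0$, while on the other, as $\beta \to -1/(p-1)^+$, the integrand of $\lambda(\beta)$ near $\tau = 0$ behaves like $-\tau^{\beta(p-1)}$ with exponent $\beta(p-1) \to -1$, so that $\lambda(\beta) \to -\infty$. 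Continuity of $\lambda$ on its domain thus produces a zero $\beta^* \in (-1/(p-1), (sp-1)/p)$; uniqueness on this sub-interval is immediate from the strict monotonicity of part (1). Finally, the sign characterization $\lambda(\beta) \geq 0 \Leftrightarrow \beta^* \leq \beta \leq s$ is a direct consequence of (1): on $[\beta^*, (sp-1)/p]$ the function $\lambda$ increases from $0$ up to its maximum, while on $[(sp-1)/p, s]$ it decreases back to $0$.
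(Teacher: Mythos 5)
Your proposal is correct, and its core computation coincides with the paper's: the derivative you write down is exactly the quantity $g'(\beta)$ in the paper's proof (there displayed before factoring out $|1-\tau^\beta|^{p-2}$), and the sign analysis, the collapse of the integrand at $\beta=(s\,p-1)/p$, and the treatment of part (2) via $\lambda(s)=0$, $\lambda((s\,p-1)/p)>0$, $\lambda(\beta)\to-\infty$ as $\beta\to(-1/(p-1))^+$ and the Intermediate Value Theorem are the same. The packaging differs in one useful way: the paper never differentiates under the integral sign. It fixes $t\in(0,1)$, shows that $\beta\mapsto g(\beta)=J_p(1-t^\beta)\,(1-t^{s\,p-1-\beta\,(p-1)})$ is increasing for $\beta<(s\,p-1)/p$ and decreasing for $\beta>(s\,p-1)/p$, and then integrates this pointwise monotonicity against the positive kernel $(1-t)^{-1-s\,p}$; this removes the dominated-convergence justification that occupies the middle of your argument. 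Two points in your write-up need repair. First, a sign slip: since $x\mapsto\tau^x$ is decreasing for $\tau\in(0,1)$, the factor $\tau^{s\,p-1-\beta\,(p-1)}-\tau^\beta$ has the \emph{same} sign as $\beta-(s\,p-1)/p$ (it is its product with $\log\tau<0$ that has the opposite sign); your final conclusion on the sign of $\partial_\beta F$ is the correct one, but it does not follow from the sentence as written. Second, for $1<p<2$ the pointwise derivative $\partial_\beta F(\beta,\tau)$ blows up at $\beta=0$, since $|1-\tau^\beta|^{p-2}\sim|\beta\,\log\tau|^{p-2}$ there, so the dominated-convergence argument cannot be run on compact $\beta$-intervals containing $0$; either work on compact intervals avoiding $0$ and recover monotonicity across $\beta=0$ by continuity of $\lambda$, or argue as the paper does, which handles $\beta=0$ separately through the observation $g(0)=0$ and differentiates only for $\beta\neq0$.
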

\begin{proof}
We proceed similarly as in \cite[Lemma B.1]{BF}, but making a more complete study.
For every $0<t<1$, we consider the function defined by
\[
g(\beta)=J_p(1-t^\beta)\,\left(1-t^{s\,p-1-\beta\,(p-1)}\right),\qquad \mbox{ for } 0\le \beta<\frac{s\,p}{p-1}.
\]
We discuss the monotonicity of such a function. We first observe that $g(0)=0$. Let us take $\beta\not =0$ and
differentiate $g$, we have
\begin{equation}
\label{derivata}
\begin{split}
g'(\beta)&=(p-1)\,t^{s\,p-1-\beta\,(p-1)}\,\log t\, J_p(1-t^\beta)\\
&-(p-1)\,t^\beta\, \log t\,\left(1-t^{s\,p-1-\beta\,(p-1)}\right)\,J_p'(1-t^\beta),
\end{split}
\end{equation}
By observing that $\log t<0$, $J_p'(1-t^\beta)>0$ and that $J_p(1-t^\beta)>0$ for $\beta>0$, we get
\[
\begin{split}
g'(\beta)\ge 0\quad& \Longleftrightarrow\quad t^{s\,p-1-\beta\,(p-1)}\,(1-t^\beta)-t^\beta\left(1-t^{s\,p-1-\beta\,(p-1)}\right)\le 0\\
& \Longleftrightarrow\quad t^{s\,p-1-\beta\,(p-1)}\le t^\beta\\
&\Longleftrightarrow \quad t^{s\,p-1-\beta\,p}\le 1.
\end{split}
\]
Since $0<t<1$, the last requirement is equivalent to 
\[
\beta\le \frac{s\,p-1}{p}.
\]
This implies that: 
\begin{itemize}
\item if $s\,p<1$, then $g$ is monotone decreasing on the whole interval $(0,(s\,p)/(p-1))$ and thus 
\[
g(\beta)\le g(0)=0,\qquad \mbox{ for every } 0\le \beta<\frac{s\,p}{p-1};
\]
\item if $s\,p\ge1$, then $g$ is monotone increasing on $(0,(s\,p-1)/p)$ and monotone decreasing on $((s\,p-1)/p,s\,p/(p-1))$. In particular, it is maximal at $\beta=(s\,p-1)/p$ and thus
\[
g(\beta)\le g\left(\frac{s\,p-1}{p}\right)=\left(1-t^\frac{s\,p-1}{p}\right)^p,\qquad \mbox{ for every } 0\le \beta<\frac{s\,p}{p-1}.
\]
We also observe that 
\[
\left(1-t^\frac{s\,p-1}{p}\right)^p\ge 0.
\]
\end{itemize}
We now perform a similar discussion for $\beta<0$: from \eqref{derivata}, by noticing that this time $J_p(1-t^\beta)=-J_p(t^\beta-1)<0$, while $J_p'(1-t^\beta)=J_p'(t^\beta-1)$, 
we get again
\[
g'(\beta)\ge 0\quad \Longleftrightarrow \quad \beta\le \frac{s\,p-1}{p}.
\]
As above, this implies that: 
\begin{itemize}
\item if $s\,p< 1$, then $g$ is monotone increasing on $(-1/(p-1),(s\,p-1)/p)$ and monotone decreasing on $((s\,p-1)/p,0)$. In particular, it is maximal at $\beta=(s\,p-1)/p$ and thus
\[
g(\beta)\le g\left(\frac{s\,p-1}{p}\right)=\left(t^\frac{s\,p-1}{p}-1\right)^p,\qquad \mbox{ for every } -\frac{1}{p-1}<\beta<0.
\]
We also observe that 
\[
\left(t^\frac{s\,p-1}{p}-1\right)^p\ge 0;
\]
\item if $s\,p\ge 1$, then $g$ is monotone increasing on the whole interval $(-1/(p-1),0)$ and thus
\[
g(\beta)\le \lim_{\tau\to 0^-}g(\tau)=0,\qquad \mbox{ for every } -\frac{1}{p-1}< \beta<0.
\]
\end{itemize}
In particular, this finally permits to infer that 
\[
\max_{-\frac{1}{p-1}<\beta<\frac{s\,p}{p-1}} g(\beta)\le \left|1-t^\frac{s\,p-1}{p}\right|^p.
\]
and such a maximal value is uniquely attained at $\beta=(s\,p-1)/p$.
By recalling that by definition 
\[
\lambda(\beta)=2\,\int_0^1\frac{g(\beta)}{(1-t)^{1+s\,p}}\,dt+\frac{2}{s\,p},
\]
the properties of $\lambda$ claimed in (1) follow from the above detailed discussion on $g$.
\vskip.2cm\noindent
Finally, the fact that $\lambda(s)=0$ has been proved in \cite[Lemma 3.1]{IMS}. The existence of the exponent $\beta^*$ now follows by using the monotonicity and continuity of $\lambda$, together with the fact that
\[
\lambda\left(\frac{s\,p-1}{p}\right)>0\qquad \mbox{ and }\qquad \lim_{\beta\to\left(-\frac{1}{p-1}\right)^{+}}\lambda(\beta)=-\infty.
\]
This concludes the proof.
\end{proof}
\begin{oss}[The exponent $\beta^*$]
\label{oss:exponent}
For $p=2$, the function $\lambda(\beta)$ is given by 
\[
\lambda(\beta)=\int_0^1 \dfrac{1-t^\beta}{(1-t)^{1+2\,s}}\,\left(1-t^{2\,s-1-\beta}\right)\,dt+\dfrac{1}{s}.
\]
It is not difficult to see that such a function is symmetric with respect to the maximum point $(2\,s-1)/2$, i.e. we have 
\begin{equation}
\label{symmetric}
\lambda(2\,s-1-\beta)=\lambda(\beta),\qquad \mbox{ for every } -1<\beta<2\,s.
\end{equation}
Accordingly, the exponent $\beta^*$ in this case is simply given by 
\[
\beta^*=2\,s-1-s=s-1.
\]
With such a choice, in view of \eqref{symmetric}, we have
\[
\lambda(s-1)=\lambda(s)=0.
\]
Another case where $\beta^*$ can be explicitly determined is when $s\,p=1$. In this case, we have 
\[
\lambda(\beta)=\int_0^1 \dfrac{J_p(1-t^\beta)}{(1-t)^{2}}\,\left(1-t^{-\beta\,(p-1)}\right)\,dt+2,
\]
and we observe that 
\[
\begin{split}
J_p(1-t^\beta)\,\left(1-t^{-\beta\,(p-1)}\right)
=J_p(1-t^{-\beta})\left(1-t^{\beta\,(p-1)}\right),
\end{split}
\]
thanks to the oddness and the homogeneity of $J_p$. This shows that $\beta\mapsto \lambda(\beta)$ is an even function, i.e.
\[
\lambda(\beta)=\lambda(-\beta),\qquad \mbox{ for every } -\frac{1}{p-1}<\beta<\frac{1}{p-1}.
\]
Thus, by recalling that $\lambda(s)=0$, we get in this case that $\beta^*=-s=-1/p$. 
\end{oss}

\subsection{The interval for $p=2$}
By using the results of the previous subsection and the properties of the {\it fractional Kelvin transform}, in the case $p=2$ we can ``transplant'' supersolutions on $\mathbb{H}^1_+$ to construct suitable supersolutions in a bounded interval. We refer for example to \cite{BZ} and \cite[Appendix A]{ROS} for the definition and properties of the fractional Kelvin transform, in connection with the fractional Laplacian.
\par
In what follows, we use the notation
\[
I=(0,1)\qquad \mbox{ and }\qquad d_I(t)=\min\{t,1-t\},\ \mbox{ for }t\in I.
\]
\begin{lm}
\label{lm:dyda}
Let $0<s<1$ and let $-1<\beta<2\,s$. We consider the function defined by
\[
f_\beta(t):=t^{2\,s-1-\beta}\,(1-t)^\beta,\qquad \mbox{ for } t\in I,
\]
extended by $0$ to the complement of $I$. Then this is a positive local weak solution of the equation
\[
(-\Delta)^s u=\lambda(\beta)\,\frac{u}{\big(t\,(1-t)\big)^{2\,s}},\qquad \mbox{ in }  I.
\]
In particular, it is a positive local weak supersolution of the equation \eqref{equazione}, with $\lambda=\lambda(\beta)$.
\end{lm}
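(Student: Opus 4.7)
The plan is to obtain $f_\beta$ as the image of the known solution $U_\beta(t)=t^\beta\,\mathbf{1}_{(0,+\infty)}(t)$ from Proposition \ref{prop:ACF} (applied with $p=2$) under the fractional Kelvin transform attached to a Möbius map sending $\mathbb{H}^1_+$ onto $I$. The linearity of $(-\Delta)^s$, peculiar to the Hilbertian setting $p=2$, makes such a transplantation available through the conformal covariance discussed in \cite{BZ} and \cite[Appendix A]{ROS}.

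Concretely, I will use the Möbius map $\phi\colon(0,1)\to(0,+\infty)$ given by $\phi(t)=(1-t)/t$, with $|\phi'(t)|=t^{-2}$, and consider the Kelvin-type transform
\[
K[u](t):=t^{\,2s-1}\,u\bigl(\phi(t)\bigr),\qquad t>0,
\]
extended by $0$ to $(-\infty,0]$. Since $U_\beta$ vanishes on $(-\infty,0]$ and $\phi(t)\le 0$ for $t\ge 1$, a direct evaluation gives $K[U_\beta](t)=t^{\,2s-1-\beta}(1-t)^\beta=f_\beta(t)$ for $t\in(0,1)$, and $K[U_\beta]\equiv 0$ on $\mathbb{R}\setminus(0,1)$. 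The conformal identity I intend to invoke reads
\[
(-\Delta)^s K[u](t)=t^{-1-2s}\,\bigl((-\Delta)^s u\bigr)\bigl(\phi(t)\bigr),\qquad t\in(0,1),
\]
and follows from the classical Kelvin rule for the inversion $y\mapsto 1/y$ combined with the translation invariance of $(-\Delta)^s$. Substituting the identity $(-\Delta)^s U_\beta(y)=\lambda(\beta)\,y^{\beta-2s}$ from Proposition \ref{prop:ACF} then yields, for $t\in(0,1)$,
\[
(-\Delta)^s f_\beta(t)=\lambda(\beta)\,t^{-1-2s}\Bigl(\frac{1-t}{t}\Bigr)^{\beta-2s}=\lambda(\beta)\,\frac{f_\beta(t)}{\bigl(t(1-t)\bigr)^{2s}},
\]
which is exactly the equation in the statement.

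The supersolution property for equation \eqref{equazione} with $d_I$ then drops out of the elementary bound $t(1-t)\le\min\{t,1-t\}=d_I(t)$ on $I$, which gives $(t(1-t))^{-2s}\ge d_I(t)^{-2s}$, together with $\lambda(\beta)\ge 0$ in the range $\beta\in[\beta^\ast,s]$ identified in Proposition \ref{lm:costantibeta} and Remark \ref{oss:exponent}. The main technical hurdle I anticipate is upgrading the pointwise Kelvin identity to the weak formulation of Definition \ref{defi:subsuper}: one has to check that the transform maps compactly supported admissible test functions on one side to admissible test functions on the other, and that the tail condition $f_\beta\in L^1_{s,p}(\mathbb{R})$ is met, the latter being encoded in the assumption $-1<\beta<2s$ through Lemma \ref{lm:potenzesobolev}. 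Once these verifications are in place, the weak identity follows from the pointwise one by a standard density argument.
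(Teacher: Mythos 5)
Your proposal is correct in substance and rests on the same idea as the paper's proof: $f_\beta$ is obtained by transplanting $U_\beta$ from the half-line through the very same map $t\mapsto 1/t-1$, i.e. $f_\beta(t)=t^{2\,s-1}\,U_\beta\left(\frac{1}{t}-1\right)$, and the equation follows from the conformal covariance of $(-\Delta)^s$ under the Kelvin transform. The difference is procedural. The paper never invokes the pointwise Kelvin rule: it establishes the covariance directly in the weak formulation, by changing variables $t=1/x$, $\tau=1/y$ in the double integral, splitting the integrand through an elementary algebraic identity, discarding one term because $|x|^{2\,s-1}$ is weakly $s-$harmonic in $\mathbb{R}\setminus\{0\}$ (\cite[Theorem A.4]{BMS}), and then testing the weak equation of Proposition \ref{prop:ACF} with $\psi(x)=|x|^{2\,s-1}\,\varphi(1/x)\in C^\infty_0((1,+\infty))$; this is tailored to the fact that both the input (Proposition \ref{prop:ACF}) and the conclusion are weak-solution statements. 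Your route instead uses the pointwise identity $(-\Delta)^s K[u](t)=t^{-1-2\,s}\,\bigl((-\Delta)^s u\bigr)(\phi(t))$ from \cite{BZ, ROS} — legitimate here, since $U_\beta(\cdot-1)$ is smooth on $(1,+\infty)$, lies in $L^1_{2\,s}(\mathbb{R})$ for $-1<\beta<2\,s$, and Proposition \ref{prop:ACF} provides the locally uniform principal value — and must then convert the resulting pointwise identity for $f_\beta$ into the weak one. That conversion is the only place where your write-up is loose: it is not a ``density argument'', but the symmetrization/principal-value argument of the type carried out in Proposition \ref{prop:ACF} (or Theorem \ref{teo:supersoluzioniconv}), relying on $f_\beta\in W^{s,2}_{\rm loc}(I)\cap L^{1}_{2\,s}(\mathbb{R})$ (local Lipschitz regularity in $I$ plus $-1<\beta<2\,s$) and on the locally uniform convergence of the truncated integrals on the support of the test function; once phrased this way, it goes through. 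Finally, your observation that passing from the weight $\bigl(t\,(1-t)\bigr)^{-2\,s}$ to $d_I^{-2\,s}$ requires $\lambda(\beta)\ge 0$ is a correct caveat which the paper glosses over; it is harmless for the sequel, since the lemma is applied with $\beta=(2\,s-1)/2$, where $\lambda(\beta)=\Lambda_{s,2}>0$.
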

\begin{proof}
We first notice that the last part of the statement easily follows from the fact that 
\[
t\,(1-t)\le \min\{t,1-t\},\qquad \mbox{ for } t\in I.
\]
Let us focus on proving that $f_\beta$ is a solution of the claimed equation.
By still using the notation of the previous subsection, we see that 
\[
f_\beta(t)=t^{2\,s-1}\,U_\beta\left(\frac{1}{t}-1\right),\qquad\mbox{ for }t\in I.
\]
Thus, $f_\beta$ coincides with the {\it fractional Kelvin transform} of the ``shifted'' function 
\[
x\mapsto U_\beta(x-1),
\] 
defined on the half-line $(1,+\infty)$ and extended by $0$ to its complement. Then the proof of the statement above consists in computing the fractional Laplacian of such a Kelvin transform. For every $\varphi\in C^\infty_0((0,1))$, we write
\[
\begin{split}
\iint_{\mathbb{R}\times\mathbb{R}}& \frac{\big(f_\beta(t)-f_\beta(\tau)\big)\,\big(\varphi(t)-\varphi(\tau)\big)}{|t-\tau|^{1+2\,s}}\,dt\,d\tau\\
&=\iint_{\mathbb{R}\times\mathbb{R}} \frac{\left(|t|^{2\,s-1}\,U_\beta\left(\dfrac{1}{t}-1\right)-|\tau|^{2\,s-1}\,U_\beta\left(\dfrac{1}{\tau}-1\right)\right)\,\big(\varphi(t)-\varphi(\tau)\big)}{|t-\tau|^{1+2\,s}}\,dt\,d\tau.
\end{split}
\]
We then make the change of variable $t=1/x$ and $\tau=1/y$, so to get
\[
\begin{split}
\iint_{\mathbb{R}\times\mathbb{R}}& \frac{\big(f_\beta(t)-f_\beta(\tau)\big)\,\big(\varphi(t)-\varphi(\tau)\big)}{|t-\tau|^{1+2\,s}}\,dt\,d\tau\\
&=\iint_{\mathbb{R}\times\mathbb{R}} \frac{\left(|x|^{1-2\,s}\,U_\beta\left(x-1\right)-|y|^{1-2\,s}\,U_\beta\left(y-1\right)\right)\,\left(\varphi\left(\dfrac{1}{x}\right)-\varphi\left(\dfrac{1}{y}\right)\right)}{|x-y|^{1+2\,s}}\,|x|^{2\,s-1}\,|y|^{2\,s-1}\,dx\,dy\\
&=\iint_{\mathbb{R}\times\mathbb{R}} \frac{\left(|y|^{2\,s-1}\,U_\beta\left(x-1\right)-|x|^{2\,s-1}\,U_\beta\left(y-1\right)\right)\,\left(\varphi\left(\dfrac{1}{x}\right)-\varphi\left(\dfrac{1}{y}\right)\right)}{|x-y|^{1+2\,s}}\,dx\,dy.
\end{split}
\]
We now observe that we have the following pointwise identity
\[
\begin{split}
\big(|y|^{2\,s-1}\,U_\beta\left(x-1\right)&-|x|^{2\,s-1}\,U_\beta\left(y-1\right)\big)\,\left(\varphi\left(\dfrac{1}{x}\right)-\varphi\left(\dfrac{1}{y}\right)\right)\\
&=\left(U_\beta\left(x-1\right)-U_\beta\left(y-1\right)\right)\,\left(|x|^{2\,s-1}\,\varphi\left(\dfrac{1}{x}\right)-|y|^{2\,s-1}\,\varphi\left(\dfrac{1}{y}\right)\right)\\
&-\left(U_\beta\left(x-1\right)\,\varphi\left(\frac{1}{x}\right)-U_\beta\left(y-1\right)\,\varphi\left(\frac{1}{y}\right)\right)\, \left(|x|^{2\,s-1}-|y|^{2\,s-1}\right).
\end{split}
\]
This implies that 
\[
\begin{split}
\iint_{\mathbb{R}\times\mathbb{R}}& \frac{\big(f_\beta(t)-f_\beta(\tau)\big)\,\big(\varphi(t)-\varphi(\tau)\big)}{|t-\tau|^{1+2\,s}}\,dt\,d\tau\\
&=\iint_{\mathbb{R}\times\mathbb{R}} \frac{\left(U_\beta\left(x-1\right)-U_\beta\left(y-1\right)\right)\,\left(|x|^{2\,s-1}\,\varphi\left(\dfrac{1}{x}\right)-|y|^{2\,s-1}\,\varphi\left(\dfrac{1}{y}\right)\right)}{|x-y|^{1+2\,s}}\,dx\,dy\\
&-\iint_{\mathbb{R}\times\mathbb{R}} \frac{\left(U_\beta\left(x-1\right)\,\varphi\left(\dfrac{1}{x}\right)-U_\beta\left(y-1\right)\,\varphi\left(\dfrac{1}{y}\right)\right)\, \left(|x|^{2\,s-1}-|y|^{2\,s-1}\right)}{|x-y|^{1+2\,s}}\,dx\,dy.
\end{split}
\]
The second integral on the right-hand side vanishes, thanks to the fact that the function $x\mapsto |x|^{2\,s-1}$ is a local weak solution of 
\[
(-\Delta)^s u=0,\qquad \mbox{ in } \mathbb{R}\setminus \{0\},
\]
(see \cite[Theorem A.4]{BMS}),
once we observe that 
\[
x\mapsto U_\beta\left(x-1\right)\,\varphi\left(\dfrac{1}{x}\right),
\]
is an element of $C^\infty_0((1,+\infty))$.
\par
We can now use the equation solved by $x\mapsto U_\beta(x-1)$: indeed, by Proposition \ref{prop:ACF} for every $\psi\in C^\infty_0((1,+\infty))$ we have 
\[
\iint_{\mathbb{R}\times\mathbb{R}} \frac{\left(U_\beta(x-1)-U_\beta(y-1)\right)\,\big(\psi(x)-\psi(y)\big)}{|x-y|^{1+2\,s}}\,dx\,dy=\lambda(\beta)\,\int_1^{+\infty} \frac{U_\beta(x-1)}{(x-1)^{2\,s}}\,\psi(x)\,dx.
\]
In particular, by choosing 
\[
\psi(x)=|x|^{2\,s-1}\,\varphi\left(\dfrac{1}{x}\right),
\]
and observing that this belongs to $C^\infty_0((1,+\infty))$ if $\varphi\in C^\infty_0((0,1))$, we get
\[
\begin{split}
\iint_{\mathbb{R}\times\mathbb{R}} &\frac{\left(U_\beta\left(x-1\right)-U_\beta\left(y-1\right)\right)\,\left(|x|^{2\,s-1}\,\varphi\left(\dfrac{1}{x}\right)-|y|^{2\,s-1}\,\varphi\left(\dfrac{1}{y}\right)\right)}{|x-y|^{1+2\,s}}\,dx\,dy\\
&=\lambda(\beta)\,\int_1^{+\infty} \frac{U_\beta(x-1)}{(x-1)^{2\,s}}\,|x|^{2\,s-1}\,\varphi\left(\dfrac{1}{x}\right)\,dx.
\end{split}
\]
Finally, by changing back variable $x=1/t$ in the last integral, we get with simple manipulations
\[
\begin{split}
\lambda(\beta)\,\int_1^{+\infty} \frac{U_\beta(x-1)}{(x-1)^{2\,s}}\,|x|^{2\,s-1}\,\varphi\left(\dfrac{1}{x}\right)\,dx
&=\lambda(\beta)\,\int_0^1 \frac{f_\beta(t)}{t^{2\,s}\,(1-t)^{2\,s}}\,\varphi(t)\,dt.
\end{split}
\]
This concludes the proof.
\end{proof}
\begin{oss}
The previous result has been greatly inspired to us by the reading of \cite{Dy}. More precisely, in \cite[Lemma 2.1]{Dy} it is computed the fractional Laplacian of order $s$ of the function
\begin{equation}
\label{dydaw}
w(x)=(1-x^2)^\beta,\qquad \mbox{ for } x\in(-1,1).
\end{equation}
In \cite{Dy} the equation obtained is similar, though a bit different: the computation
uses the Kelvin transformation, as well, even if in a slightly implicit fashion. In other words, in \cite{Dy} the function $w$ is not displayed as the {\it conformal transplantation} of a solution on the half-line: in this respect, we believe that our proof above has its own interest.
\par
In order to compare our function $f_\beta$ with Dyda's one \eqref{dydaw}, we observe that by making the change of variable $x=2\,t-1$, we get
\[
W(t)=w(2\,t-1)=(1-(2\,t-1)^2)^\beta=4^{\beta}\,t^\beta\,(1-t)^\beta,\qquad \mbox{ for } t\in I.
\]
Up to the unessential multiplicative factor $4^\beta$, we see that Dyda's function coincides with ours if and only if 
\[
2\,s-1-\beta=\beta\qquad \mbox{ i.\,e. }\qquad\beta=\frac{2\,s-1}{2}.
\]
Incidentally, we notice that this is the value of $\beta$ which makes $\lambda(\beta)$ the largest possible, by Proposition \ref{lm:costantibeta}.
\end{oss}
\begin{oss}
By recalling Remark \ref{oss:exponent}, from Lemma \ref{lm:dyda} we get in particular that, with the choices $\beta=s$ and $\beta=s-1$, the two functions
\[
t\mapsto t^{s-1}\,(1-t)^s\qquad \mbox{ and }\qquad t\mapsto t^{s}\,(1-t)^{s-1},
\]
are locally weakly $s-$harmonic on $I$.
\end{oss}

\section{Construction of supersolutions in convex sets}
\label{sec:5}

In what follows, for an open set $\Omega\subsetneq\mathbb{R}^N$ we will use the shortcut notation
\[
U_\beta:=d_\Omega^\beta,
\]
where this function is extended by $0$ to the complement $\mathbb{R}^N\setminus \Omega$. In particular, in the borderline case $\beta=0$, this has to be intended as the characteristic function of $\Omega$.
\begin{lm}
\label{lm:sobolevbase}
Let $1<p<\infty$ and $0<s<1$. Let $\Omega\subset\mathbb{R}^N$ be an open bounded convex set. For every
\[
-\frac{1}{p-1}<\beta,
\] 
we have 
\[
U_\beta\in W^{s,p}_{\rm loc}(\Omega)\cap L^{p-1}_{s\,p}(\mathbb{R}^N).
\]
If $\Omega$ is unbounded, then this property is still true, provided that we further assume
\[
\beta<\frac{s\,p}{p-1}.
\] 
\end{lm}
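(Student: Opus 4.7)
\smallskip

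The statement splits naturally into two independent claims: the local Sobolev regularity $U_\beta \in W^{s,p}_{\rm loc}(\Omega)$ and the weighted integrability $U_\beta \in L^{p-1}_{s\,p}(\mathbb{R}^N)$. The first is purely local and does not use convexity in an essential way, while the second is where the convexity hypothesis and the two distinct ranges on $\beta$ enter.

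\smallskip

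For the first claim, I would fix $\Omega' \Subset \Omega$ and observe that $d_\Omega$ is $1$--Lipschitz globally and satisfies $\delta \le d_\Omega \le D$ on $\Omega'$, for some constants $0 < \delta \le D < +\infty$. Since $t\mapsto t^\beta$ is $C^1$ on $[\delta,D]$, it follows that $U_\beta$ is bounded and Lipschitz on $\Omega'$. A standard short-range/long-range split then gives
\[
[U_\beta]^p_{W^{s,p}(\Omega')} \le \mathrm{Lip}(U_\beta)^p\iint_{|x-y|<1,\,x,y\in\Omega'} |x-y|^{p-N-s\,p}\,dx\,dy + (2\|U_\beta\|_\infty)^p\iint_{|x-y|\ge 1,\,x,y\in\Omega'} |x-y|^{-N-s\,p}\,dx\,dy,
\]
both finite since $p\,(1-s)>0$ and $s\,p>0$, and $\Omega'$ is bounded.

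\smallskip

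For the second claim, I need
\[
\mathcal{I} := \int_\Omega \frac{d_\Omega(x)^{\beta(p-1)}}{(1+|x|)^{N+s\,p}}\,dx < +\infty.
\]
If $\Omega$ is bounded, the weight $(1+|x|)^{-N-s\,p}$ is bounded above, so $\mathcal{I}$ reduces (up to constants) to $\int_\Omega d_\Omega^{\beta(p-1)}\,dx$. Here I would use the coarea formula together with the key convex geometry fact that the inner parallel sets $\{d_\Omega>t\}$ are again convex and nested in $\Omega$, hence $\mathcal{H}^{N-1}(\{d_\Omega=t\}) \le \mathcal{H}^{N-1}(\partial\Omega) < +\infty$. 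This yields
\[
\int_\Omega d_\Omega^{\beta(p-1)}\,dx \le \mathcal{H}^{N-1}(\partial\Omega)\int_0^{r(\Omega)} t^{\beta(p-1)}\,dt,
\]
which is finite precisely because $\beta(p-1) > -1$. If $\Omega$ is unbounded, I would separate $\beta\ge 0$ from $-1/(p-1)<\beta<0$. For $\beta \ge 0$, invoke Hahn--Banach to find a half-space $\mathbb{H}\supseteq\Omega$ (WLOG $\mathbb{H}=\{x_N>0\}$), so that $d_\Omega(x)\le d_{\mathbb{H}}(x)=x_N\le|x|$; this reduces the estimate to
\[
\mathcal{I}\le C\int_{\mathbb{R}^N}\frac{|x|^{\beta(p-1)}}{(1+|x|)^{N+s\,p}}\,dx,
\]
which is finite iff $\beta(p-1)<s\,p$, that is $\beta<s\,p/(p-1)$. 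For $-1/(p-1)<\beta<0$, split $\Omega = \{d_\Omega\ge 1\}\cup\{d_\Omega<1\}$: on the first set $d_\Omega^{\beta(p-1)}\le 1$ and the weight is integrable, while on the boundary layer I would use the map $(y,t)\mapsto y+t\,\nu(y)$ from $\partial\Omega\times(0,1)$, whose Jacobian is $\le 1$ by the monotonicity of perimeters of convex parallel bodies, combined with $(1+|y+t\nu(y)|)\ge c(1+|y|)$ and the estimate $\mathcal{H}^{N-1}(\partial\Omega\cap B_R)=O(R^{N-1})$ to handle the boundary integral.

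\smallskip

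The main obstacle I anticipate is precisely this last sub-case: the unbounded convex setting with negative $\beta$. Here I simultaneously need the singularity of $d_\Omega^{\beta(p-1)}$ at $\partial\Omega$ (tamed by $\beta>-1/(p-1)$) and the decay at infinity (provided by the weight $(1+|x|)^{-N-s\,p}$), mediated by the $(N-1)$--dimensional spread of $\partial\Omega$. The coarea/parametrization argument using convexity of inner parallel sets is the natural tool, but requires a careful treatment of the behavior of $\partial\Omega$ near infinity. The other cases reduce cleanly to one-dimensional power integrals.
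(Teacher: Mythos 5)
Your proposal is correct and, for the most part, follows the same route as the paper: the local $W^{s,p}$ membership is dispatched exactly as you do (local Lipschitz character of $d_\Omega^\beta$ away from $\partial\Omega$), and in the bounded case the paper uses precisely your coarea argument, bounding $\mathcal{H}^{N-1}(\{d_\Omega=t\})$ by $\mathcal{H}^{N-1}(\partial\Omega)$ via the monotonicity of surface area of nested convex sets, which yields finiteness exactly for $\beta\,(p-1)>-1$. For the unbounded case with $\beta\ge 0$ the paper is slightly more economical than your half-space reduction: it simply fixes $x_0\in\partial\Omega$ and uses $U_\beta(x)\le |x-x_0|^\beta$ on all of $\mathbb{R}^N$, which gives the same condition $\beta\,(p-1)<s\,p$ without any ``WLOG'' normalization (your translation does change the weight $(1+|x|)^{-N-s\,p}$, albeit only by a bounded factor, so this should at least be remarked). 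The genuine difference is the unbounded case with $-1/(p-1)<\beta<0$: the paper dismisses it as straightforward, whereas you supply an actual argument, and your strategy does work -- the boundary-layer estimate reduces to $\int_0^1 t^{\beta(p-1)}\,dt<\infty$ times $\int_{\partial\Omega}(1+|y|)^{-N-s\,p}\,d\mathcal{H}^{N-1}(y)$, which converges by $\mathcal{H}^{N-1}(\partial\Omega\cap B_R)=O(R^{N-1})$ (itself a consequence of $\partial\Omega\cap B_R\subset\partial(\Omega\cap B_R)$ and surface-area monotonicity). The one point needing care is your Jacobian bound for $(y,t)\mapsto y+t\,\nu(y)$: on a general convex boundary $\nu$ is only defined $\mathcal{H}^{N-1}$-a.e.\ and the area formula requires Alexandrov-type differentiability or an approximation argument. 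You can avoid this entirely, and stay closer to the paper's own bounded-case computation, by a dyadic decomposition: on $\Omega\cap(B_{2^{k+1}}\setminus B_{2^k})$ use $d_\Omega\ge d_{\Omega\cap B_{2^{k+2}}}$, hence $d_\Omega^{\beta(p-1)}\le d_{\Omega\cap B_{2^{k+2}}}^{\beta(p-1)}$ for $\beta<0$, apply the coarea bound on the bounded convex set $\Omega\cap B_{2^{k+2}}$ whose perimeter is $O(2^{k(N-1)})$, and sum the resulting series $\sum_k 2^{k(\beta(p-1)-s\,p)}$, which converges since $\beta(p-1)<0<s\,p$.
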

\begin{proof}
The fact that $U_\beta\in W^{s,p}_{\rm loc}(\Omega)$ easily follows from its local Lipschitz character. In order to show that $U_\beta\in L^{p-1}_{s\,p}(\mathbb{R}^N)$, if $\Omega$ is bounded it is enough to show that $U_\beta\in L^{p-1}(\Omega)$. 
To prove this, we can confine ourselves to consider $\beta<0$ (otherwise there is nothing to prove). By using the Coarea Formula and indicating by $r_\Omega$ the supremum of $d_\Omega$ over $\Omega$, we have 
\[
\begin{split}
\int_\Omega U_\beta^{p-1}\,dx&=\int_0^{r_\Omega} t^{\beta\,(p-1)}\,\mathcal{H}^{N-1}(\{x\in\Omega\, :\, d_\Omega=t\})\,dt\le \mathcal{H}^{N-1}(\partial\Omega)\,\int_0^{r_\Omega} t^{\beta\,(p-1)}\,dt.
\end{split}
\]
We then observe that the last integral is finite, provided $\beta>-1/(p-1)$. Observe that we used the monotonicity of the surface area of {\it convex sets} with respect to set inclusion (see \cite[Lemma 2.2.2]{BB}), in the last estimate.
\par
If $\Omega$ is an unbounded convex set, not coinciding with $\mathbb{R}^N$, the proof above still shows that $U_\beta\in W^{s,p}_{\rm loc}\cap L^{p-1}_{\rm loc}(\Omega)$. In order to conclude, we need to prove that 
\[
\int_{\mathbb{R}^N} \frac{U_\beta^{p-1}}{(1+|x|)^{N+s\,p}}\,dx<+\infty,\qquad \mbox{ if } \beta<\frac{s\,p}{p-1}.
\]
For $\beta\le 0$ such a property is straightforward. For $\beta>0$, it is sufficient to fix $x_0\in\partial\Omega$ and observe that (recall that $U_\beta$ vanishes outside $\Omega$)
\[
U_\beta(x)\le |x-x_0|^\beta,\qquad \mbox{ for every } x\in\mathbb{R}^N.
\]
We then obtain
\[
\int_{\mathbb{R}^N} \frac{U_\beta^{p-1}}{(1+|x|)^{N+s\,p}}\,dx\le \int_{\mathbb{R}^N} \frac{|x-x_0|^{\beta\,(p-1)}}{(1+|x|)^{N+s\,p}}\,dx.
\]
It is easily seen that the last integral converges if $\beta\,(p-1)<s\,p$.
\end{proof}
For every $k\in\mathbb{N}$ and $\alpha>0$, we recall that we set
\[
\mathcal{I}(k;\alpha)=\int_0^{+\infty} t^k\,(1+t^2)^{-\frac{k+2+\alpha}{2}}\,dt.
\]
Then we observe that for $N\ge 2$ and every $m>0$, by using the $(N-1)-$dimensional spherical coordinates and a change of variable, we have
\begin{equation}
\label{magic}
\int_{\mathbb{R}^{N-1}} \frac{dy'}{(m^2+|x'-y'|^2)^\frac{N+s\,p}{2}}=\frac{(N-1)\,\omega_{N-1}}{m^{1+s\,p}}\,\mathcal{I}(N-2;s\,p).
\end{equation}
In what follows, we still denote by $\lambda(\beta)$ the constant given by \eqref{lambda}, while $C_{N,s\,p}$ is defined in \eqref{costd}.
 We refer to Remark \ref{oss:meglio} below, for a comment about the sharpness of the restriction $\beta\ge 0$.
\begin{teo}
\label{teo:supersoluzioniconv}
Let $1<p<\infty$ and $0<s<1$. Let $\Omega\subsetneq \mathbb{R}^N$ be an open convex set. 
Then: 
\begin{enumerate}
\item if 
\[
0\le \beta<\frac{s\,p}{p-1},
\]
the function $U_\beta$ is a local weak supersolution of \eqref{equazione}, with $\lambda=C_{N,s\,p}\,\lambda(\beta)$;
\vskip.2cm
\item if $\Omega$ is a half-space and 
\[
-\frac{1}{p-1}<\beta<\frac{s\,p}{p-1},
\]
the function $U_\beta$ is a local weak solution of \eqref{equazione}, still with $\lambda=C_{N,s\,p}\,\lambda(\beta)$.
\end{enumerate} 
\end{teo}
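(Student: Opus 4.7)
The plan is to first establish part (2) for the half-space by reducing to the one-dimensional result of Proposition \ref{prop:ACF} via Fubini and the identity \eqref{magic}, and then to derive part (1) from a pointwise comparison with a supporting half-space.

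For part (2), with $\Omega=\mathbb{H}^N_+$ we have $d_\Omega(x',x_N)=x_N$, so $U_\beta$ depends only on the last coordinate. For $\varphi\in C^\infty_0(\mathbb{H}^N_+)$ I would follow the strategy of Proposition \ref{prop:ACF} and excise from $\mathbb{R}^N\times\mathbb{R}^N$ the conical set $\{(x,y):y_N\in I_\varepsilon(x_N)\}$, so that the integrand becomes absolutely integrable (Lemma \ref{lm:sobolevbase} guarantees $U_\beta\in W^{s,p}_{\rm loc}(\mathbb{H}^N_+)\cap L^{p-1}_{s\,p}(\mathbb{R}^N)$, so the weak formulation is well-posed). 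On the complement of this set Fubini allows the $(x',y')$ integration to be done first; applying \eqref{magic} with $m=|x_N-y_N|$ produces the factor $C_{N,s\,p}/|x_N-y_N|^{1+s\,p}$, leaving a one-dimensional weak formulation for $U_\beta$ on $\mathbb{H}^1_+$ tested against the slice $y_N\mapsto \int_{\mathbb{R}^{N-1}}\varphi(x',y_N)\,dx'\in C^\infty_0(\mathbb{H}^1_+)$. The uniform convergence of $F_\varepsilon$ in Proposition \ref{prop:ACF} lets me pass to the limit $\varepsilon\to 0^+$ and identifies the coefficient as $C_{N,s\,p}\,\lambda(\beta)$, showing that $U_\beta$ is a local weak solution; the case $N=1$ is exactly Proposition \ref{prop:ACF}.

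For part (1), fix $x_0\in\Omega$ and a nearest boundary point $x_0^*\in\partial\Omega$, so that $d_\Omega(x_0)=|x_0-x_0^*|$. Convexity of $\Omega$ yields a closed half-space $\mathbb{H}\supseteq\Omega$ with $x_0^*\in\partial\mathbb{H}$, and then $d_\Omega\le d_{\mathbb{H}}$ on $\Omega$ with $d_\Omega(x_0)=d_{\mathbb{H}}(x_0)$. Let $\widetilde{U}_\beta:=d_{\mathbb{H}}^\beta$, extended by zero. Since $\beta\ge 0$ and $\widetilde{U}_\beta$ vanishes outside $\mathbb{H}$, one has the pointwise bound $U_\beta\le \widetilde{U}_\beta$ on $\mathbb{R}^N$ with equality at $x_0$. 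Monotonicity of $J_p$ then yields
\[
J_p\big(U_\beta(x_0)-U_\beta(y)\big)\ge J_p\big(\widetilde{U}_\beta(x_0)-\widetilde{U}_\beta(y)\big)\qquad\text{for every }y\in\mathbb{R}^N,
\]
and integrating against $|x_0-y|^{-N-s\,p}\,dy$ on the complement of a shrinking strip around the diagonal (as in Proposition \ref{prop:ACF}) gives the pointwise supersolution inequality $(-\Delta_p)^s U_\beta(x_0)\ge (-\Delta_p)^s \widetilde{U}_\beta(x_0)=C_{N,s\,p}\,\lambda(\beta)\,U_\beta(x_0)^{p-1}/d_\Omega(x_0)^{s\,p}$, the last equality being part (2) applied to $\widetilde{U}_\beta$ on $\mathbb{H}$. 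Multiplying by a non-negative $\varphi\in C^\infty_0(\Omega)$, integrating in $x_0$, and symmetrizing the $(x,y)$ integration (using oddness of $J_p$) transfers this slice-by-slice inequality to the weak supersolution formulation of Definition \ref{defi:subsuper}.

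The main obstacle is the same as in Proposition \ref{prop:ACF}: rigorously handling the interchange of the (non-absolutely-convergent) principal value with the $\varphi$-integration. For part (2) the identity \eqref{magic} decouples the $(N-1)$-dimensional integration completely, so the convergence is governed by the already-established 1D convergence of $F_\varepsilon$. For part (1) the comparison $U_\beta\le\widetilde{U}_\beta$ is purely pointwise and stable under excision of the singular set, so the weak supersolution inequality follows by non-negativity of $\varphi$. The restriction $\beta\ge 0$ in part (1) is essential for this strategy, since negative powers would reverse the inequality $d_\Omega\le d_{\mathbb{H}}$; this aligns with the discussion in the introduction and with the counterexample carried out in Appendix \ref{app:B}.
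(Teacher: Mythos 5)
Your proposal is correct and follows essentially the same route as the paper: comparison of $d_\Omega$ with the distance to a supporting half-space at the nearest boundary point (where $\beta\ge 0$ gives the right sign), monotonicity of $J_p$, dimensional reduction to Proposition \ref{prop:ACF} via the identity \eqref{magic}, and passage to the limit in the excised (conical) integrals after testing with a non-negative $\varphi$, justified by Lemma \ref{lm:sobolevbase}. The only difference is organizational — you prove the half-space case first and then compare, while the paper runs the comparison at fixed $\varepsilon$ for a general convex set and obtains the half-space statement by noting all inequalities become equalities — which does not change the substance of the argument.
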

\begin{proof}
We will use a simple geometric construction, already exploited in the proof of \cite[Proposition 3.2]{BC}, in conjunction with the formula \eqref{magic}.
We take $x\in\Omega$ and let $\overline{x}\in\partial\Omega$ be a point such that 
\[
d_\Omega(x)=|x-\overline{x}|.
\]
Since $\Omega$ is convex, there exists a supporting hyperplane for it at the point $\overline{x}$. Without loss of generality, we can suppose that such a supporting hyperplane coincides with 
\[
\mathbb{H}^N_+:=\mathbb{R}^{N-1}\times(0,+\infty),
\]
and thus 
\[
x=(x',x_N)\ \mbox{ with } x_N>0,\qquad \overline{x}=(x',0)\qquad \mbox{ and }\qquad d_\Omega(x)=x_N.
\]
\begin{figure}
\includegraphics[scale=.25]{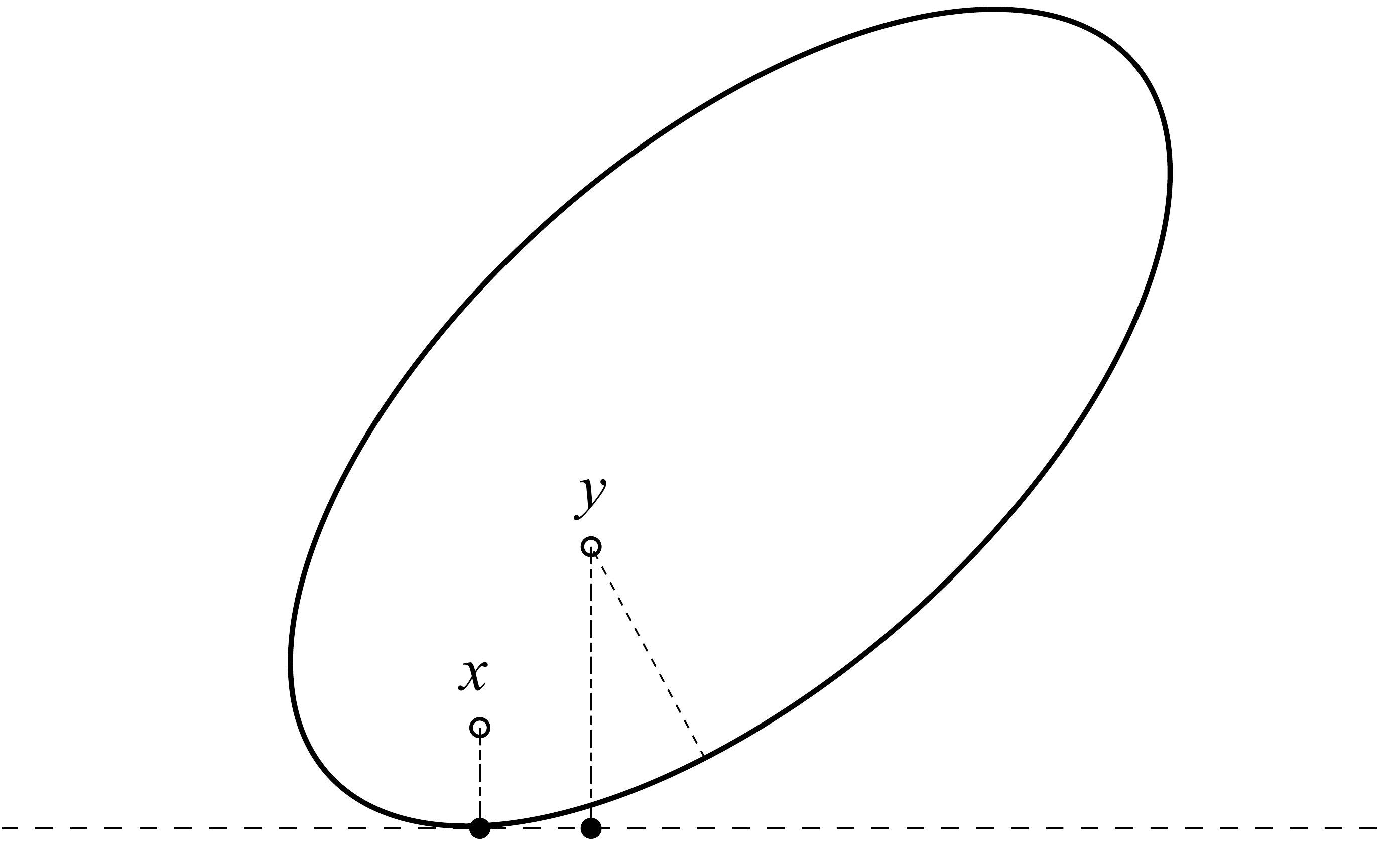}
\caption{The supporting hyperplane for $\Omega$ at $\overline{x}$.}
\label{fig:iperpiano}
\end{figure}
Moreover, we have $\Omega\subset \mathbb{H}_+^N$.
We now observe that for every other $y=(y',y_N)\in\Omega$, by convexity it results 
\[
d_\Omega(y)\le y_N,
\]
see Figure \ref{fig:iperpiano}. By observing that the distance function vanishes in the complement of $\Omega$, we actually have
\[
d_\Omega(y)\le (y_N)_+,\qquad \mbox{ for every } y=(y',y_N)\in\mathbb{R}^N.
\]
By recalling the definition of $U_\beta$, we thus get that for every $y\in \mathbb{R}^N$ and $\beta\ge 0$
\begin{equation}
\label{hop!}
U_\beta(x)-U_\beta(y)=d_\Omega(x)^\beta-d_\Omega(y)^\beta \ge (x_N)_+^\beta-(y_N)_+^\beta.
\end{equation}
For every $0<\varepsilon\ll 1$ and for every $x\in\mathbb{R}^N$, we introduce the conical set
\[
\mathcal{K}_\varepsilon(x)=\Big\{y\in\mathbb{R}^N\, :\, \min\{(1-\varepsilon)\,x_N, (1+\varepsilon)\,x_N \} \le y_N \le \max\{(1-\varepsilon)\,x_N, (1+\varepsilon)\,x_N \} \Big\}.
\]
Recalling that $x_N>0$, we now use \eqref{hop!} and the monotonicity of $\tau\mapsto J_p(\tau)$: we obtain for $\beta\ge 0$ and $0<\varepsilon\ll 1$
\begin{equation}
\label{intermedio}
\begin{split}
\int_{\mathbb{R}^N\setminus \mathcal{K}_\varepsilon(x)} \frac{J_p(U_\beta(x)-U_\beta(y))}{|x-y|^{N+s\,p}}\,dy&\ge \int_{\mathbb{R}^N\setminus \mathcal{K}_\varepsilon(x)} \frac{J_p\left((x_N)_+^\beta-(y_N)_+^\beta\right)}{|x-y|^{N+s\,p}}\,dy.
\end{split}
\end{equation}
If $N\ge 2$, the last integral can be written as
\[
\begin{split}
\int_{\mathbb{R}^N\setminus \mathcal{K}_\varepsilon(x)} \frac{J_p(U_\beta(x)-U_\beta(y))}{|x-y|^{N+s\,p}}\,dy&\ge \int_{\mathbb{R}^N\setminus \mathcal{K}_\varepsilon(x)} \frac{J_p\left((x_N)_+^\beta-(y_N)_+^\beta\right)}{|x-y|^{N+s\,p}}\,dy \\
&=\int_{\mathbb{R}\setminus I_\varepsilon(x_N)} J_p\left((x_N)_+^\beta-(y_N)_+^\beta\right)\\
&\times \left(\int_{\mathbb{R}^{N-1}} \frac{dy'}{(|x_N-y_N|^2+|x'-y'|^2)^\frac{N+s\,p}{2}}\right)\,dy_N,\\
\end{split}
\]
where $I_\varepsilon(x_N)$ is the same interval as in \eqref{intervallino}.
If we now use \eqref{magic} with $m=|x_N-y_N|$, we get 
\[
\int_{\mathbb{R}^{N-1}} \frac{dy'}{(|x_N-y_N|^2+|x'-y'|^2)^\frac{N+s\,p}{2}}=\frac{C_{N,s\,p}}{|x_N-y_N|^{1+s\,p}}.
\]
Thus, we obtain from \eqref{intermedio}
\[
\begin{split}
\int_{\mathbb{R}^N\setminus \mathcal{K}_\varepsilon(x)} \frac{J_p(U_\beta(x)-U_\beta(y))}{|x-y|^{N+s\,p}}\,dy\ge C_{N,s\,p}\,\int_{\mathbb{R}\setminus I_\varepsilon(x_N)} \frac{J_p\left((x_N)_+^\beta-(y_N)_+^\beta\right)}{|x_N-y_N|^{1+s\,p}}\,dy_N.
\end{split}
\]
By recalling that we set $C_{1,s,p}=1$, the above formula obviously holds for $N=1$, as well: actually, it coincides with \eqref{intermedio}.
\par
By the definition \eqref{Fe} and the identity \eqref{beh!}, we have for $x_N>0$
\[
2\,\int_{\mathbb{R}\setminus I_\varepsilon(x_N)} \frac{J_p\left((x_N)_+^\beta-(y_N)_+^\beta\right)}{|x_N-y_N|^{1+s\,p}}\,dy_N=
F_\varepsilon(x_N)=\lambda_\varepsilon(\beta)\,\frac{x_N^{\beta(p-1)}}{x_N^{s\,p}}=\lambda_\varepsilon(\beta)\,\frac{U_\beta(x)^{p-1}}{d_\Omega(x)^{s\,p}}.
\]
Moreover, we recall that (see the proof of Proposition \ref{prop:ACF})
\[
\begin{split}
\lambda_\varepsilon(\beta)&=2\,\int_0^{1-\varepsilon}\frac{J_p(1-\tau^\beta)}{|1-\tau|^{1+s\,p}}\,(1-\tau^{s\,p-\beta\,(p-1)})\,d\tau \\
&+2\,\int_{1-\varepsilon}^\frac{1}{1+\varepsilon}\frac{J_p(\tau^\beta-1)}{|\tau-1|^{1+s\,p}}\,\tau^{s\,p-1-\beta\,(p-1)}d\tau+\frac{2}{s\,p},
\end{split}
\]
and
\[
\lim_{\varepsilon\to 0^+} \lambda_\varepsilon(\beta)=\lambda(\beta).
\]
Thus we have 
\[
2\,C_{N,s\,p}\,\int_{\mathbb{R}\setminus I_\varepsilon(x_N)} \frac{J_p\left((x_N)_+^\beta-(y_N)_+^\beta\right)}{|x_N-y_N|^{1+s\,p}}\,dy_N=C_{N,s\,p}\,\lambda_\varepsilon(\beta)\,\frac{U_{\beta}(x)^{p-1}}{d_\Omega(x)^{s\,p}}.
\]
This in turn leads to
\[
2\,\int_{\mathbb{R}^N\setminus \mathcal{K}_\varepsilon(x)} \frac{J_p(U_\beta(x)-U_\beta(y))}{|x-y|^{N+s\,p}}\,dy\ge C_{N,s\,p}\,\lambda_\varepsilon(\beta)\,\frac{U_\beta(x)^{p-1}}{d_\Omega(x)^{s\,p}}.
\]
We take $\varphi\in C^\infty_0(\Omega)$ non-negative, multiply the previous inequality by $\varphi(x)$ and integrate over $\Omega$. We get 
\begin{equation}
\label{stanco1}
\begin{split}
2\,\int_\Omega \left(\int_{\mathbb{R}^N\setminus \mathcal{K}_\varepsilon(x)} \frac{J_p(U_\beta(x)-U_\beta(y))}{|x-y|^{N+s\,p}}\,dy\right)\,\varphi(x)\,dx&\ge C_{N,s\,p}\,\lambda_\varepsilon(\beta)\,\int_\Omega \frac{U_\beta(x)^{p-1}}{d_\Omega(x)^{s\,p}}\,\varphi(x)\,dx.
\end{split}
\end{equation}
On the other hand, we have
\[
\frac{J_p(U_\beta(x)-U_\beta(y))\,(\varphi(x)-\varphi(y))}{|x-y|^{N+s\,p}}\in L^1(\mathbb{R}^N\times\mathbb{R}^N),
\]
thanks to Lemma \ref{lm:sobolevbase}. Thus by the Dominated Convergence Theorem, we get
\[
\begin{split}
\iint_{\mathbb{R}^N\times\mathbb{R}^N}&\frac{J_p(U_\beta(x)-U_\beta(y))\,(\varphi(x)-\varphi(y))}{|x-y|^{N+s\,p}}\,dx\,dy\\
&=\lim_{\varepsilon\to 0^+} \iint_{(\mathbb{R}^N\times\mathbb{R}^N)\setminus \mathcal{C}_\varepsilon}\frac{J_p(U_\beta(x)-U_\beta(y))\,(\varphi(x)-\varphi(y))}{|x-y|^{N+s\,p}}\,dx\,dy,\\
\end{split}
\]
where
\[
\mathcal{C}_\varepsilon=\Big\{(x,y)\in\mathbb{R}^N\times\mathbb{R}^N\, :\, \min\{(1-\varepsilon)\,x_N, (1+\varepsilon)\,x_N \} \le y_N \le \max\{(1-\varepsilon)\,x_N, (1+\varepsilon)\,x_N \} \Big\}.
\]
Moreover, for every $0<\varepsilon\ll 1$ we have
\[
\frac{J_p(U_\beta(x)-U_\beta(y))}{|x-y|^{N+s\,p}}\,\varphi(x)\in L^1((\mathbb{R}^N\times\mathbb{R}^N)\setminus \mathcal{C}_\varepsilon).
\]
Thus with a simple change of variables, we get
\begin{equation}
\label{stanco2}
\begin{split}
\iint_{\mathbb{R}^N\times\mathbb{R}^N}&\frac{J_p(U_\beta(x)-U_\beta(y))\,(\varphi(x)-\varphi(y))}{|x-y|^{N+s\,p}}\,dx\,dy\\
&=\lim_{\varepsilon\to 0^+} \iint_{(\mathbb{R}^N\times\mathbb{R}^N)\setminus \mathcal{C}_\varepsilon}\frac{J_p(U_\beta(x)-U_\beta(y))\,(\varphi(x)-\varphi(y))}{|x-y|^{N+s\,p}}\,dx\,dy.\\
&=2\,\lim_{\varepsilon\to 0^+} \int_{\Omega} \left(\int_{\mathbb{R}^N\setminus \mathcal{K}_\varepsilon(x)}\frac{J_p(U_\beta(x)-U_\beta(y))}{|x-y|^{N+s\,p}}\,dy\right)\,\varphi(x)\,dx.
\end{split}
\end{equation}
Observe that we also used Fubini's Theorem for every fixed $0<\varepsilon\ll 1$, in order to arrive at the last integral. By joining \eqref{stanco1} and \eqref{stanco2}, we finally get
\[
\begin{split}
\iint_{\mathbb{R}^N\times\mathbb{R}^N}&\frac{J_p(U_\beta(x)-U_\beta(y))\,(\varphi(x)-\varphi(y))}{|x-y|^{N+s\,p}}\,dx\,dy\ge C_{N,s\,p}\,\lambda(\beta)\,\int_\Omega \frac{U_\beta(x)^{p-1}}{d_\Omega(x)^{s\,p}}\,\varphi(x)\,dx,
\end{split}
\]
which is the desired conclusion for $\beta\ge 0$. 
\par In order to prove the second statement, we first observe that if $\Omega$ is a half-space, we can assume for simplicity that
\[
\Omega=\mathbb{H}^N_+.
\] 
Then, in the case $N=1$, the statement has been proved in Proposition \ref{prop:ACF}. For $N\ge 2$, it suffices to observe that
 we have equalities everywhere in the previous argument, even for $\beta<0$, provided it is an admissible exponent.
\end{proof}

\begin{oss}[Optimality of Theorem \ref{teo:supersoluzioniconv}]
\label{oss:meglio}
As a consequence of Proposition \ref{lm:costantibeta}, we have that 
\[
C_{N,s\,p}\,\lambda(\beta)\le C_{N,s\,p}\,\lambda\left(\frac{s\,p-1}{p}\right),\qquad \mbox{ for every } -\frac{1}{p-1}<\beta<\frac{s\,p}{p-1}.
\]
Thus, even in the more general case of a convex subset $\Omega$, the choice 
\[
\beta=\frac{s\,p-1}{p},
\]
still produces a supersolution of \eqref{equazione}, which has the largest possible $\lambda$, among supersolutions of this type. However, it should be noticed that, in light of Theorem \ref{teo:supersoluzioniconv}, such a choice is now feasible {\it only for} 
\[
\frac{s\,p-1}{p}\ge 0\qquad \mbox{ i.\,e. }\qquad s\,p\ge 1,
\]
unless $\Omega$ is a half-space.
Moreover, if the convex set $\Omega$ is not a half-space, {\it such a result is optimal} in the following sense: already in the borderline case $s\,p=1$, the function $U_\beta$ with $\beta<0$
{\it is not} a supersolution of \eqref{equazione}. See Lemma \ref{lm:bum!} below for a simple counter-example.
\end{oss}

\section{The sharp fractional Hardy inequality for convex sets}
\label{sec:6}

In this section we still use the notation \eqref{halfspaces}. We start with the following general fact.
\begin{prop}
\label{prop:convessiH}
Let $1<p<\infty$ and $0<s<1$. For every $\Omega\subsetneq\mathbb{R}^N$ open convex set, we have 
\[
\mathfrak{h}_{s,p}(\Omega)\le \mathfrak{h}_{s,p}(\mathbb{H}^N_+).
\]
\end{prop}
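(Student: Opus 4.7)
The plan is a test-function argument: I will transplant a near-optimizer of the Hardy quotient on $\mathbb{H}^N_+$ into $\Omega$ via a rigid motion together with a dilation, and verify that the ratio only decreases. First, since $\Omega$ is open, non-empty and different from $\mathbb{R}^N$, I fix any interior point $z_0\in\Omega$ and select a nearest point $x_0\in\partial\Omega$ to $z_0$ (the minimum is attained, by local compactness of $\partial\Omega$). Setting $r=|z_0-x_0|>0$, the open ball $B_r(z_0)$ lies in $\Omega$ and touches $\partial\Omega$ only at $x_0$, so $\Omega$ admits a unique supporting hyperplane at $x_0$, orthogonal to $z_0-x_0$. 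Composing with a rigid motion (which preserves both $[\,\cdot\,]_{W^{s,p}(\mathbb{R}^N)}$ and $d_\Omega$, hence $\mathfrak{h}_{s,p}$) I may assume $x_0=0$ and $z_0=r\,e_N$, so that $B_r(r\,e_N)\subset\Omega\subset\mathbb{H}^N_+$. In particular
\[
d_\Omega(y)\le y_N=d_{\mathbb{H}^N_+}(y),\qquad\text{for every }y\in\Omega.
\]

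Next, I introduce the rescaled family $\Omega_\lambda:=\Omega/\lambda$ for $\lambda>0$. Since $0\in\partial\Omega$ and $\Omega$ is convex, one checks that the family is monotone: $\Omega_\lambda\subset\Omega_\mu$ whenever $\mu\le\lambda$. Each $\Omega_\lambda$ is contained in $\mathbb{H}^N_+$ and contains the dilate $B_{r/\lambda}\big((r/\lambda)\,e_N\big)$; an elementary calculation (the condition $|y-t\,e_N|<t$ with $t=r/\lambda$ reduces to $|y|^2<2\,t\,y_N$, which is solvable in $t>0$ iff $y_N>0$) gives
\[
\bigcup_{\lambda>0}B_{r/\lambda}\big((r/\lambda)\,e_N\big)=\mathbb{H}^N_+,\qquad\text{and thus}\qquad\bigcup_{\lambda>0}\Omega_\lambda=\mathbb{H}^N_+.
\]

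I can now execute the transplant. Given any $\varphi\in C^\infty_0(\mathbb{H}^N_+)$, its support $K$ is a compact subset of $\mathbb{H}^N_+$; by the monotone increasing union above, there exists $\lambda_0>0$ with $K\subset\Omega_{\lambda_0}$, i.e.\ $\lambda_0\,K\subset\Omega$. Setting $\widetilde{\varphi}(x)=\varphi(x/\lambda_0)$ puts $\widetilde{\varphi}$ in $C^\infty_0(\Omega)$. Scaling of the Gagliardo seminorm gives $[\widetilde{\varphi}]^p_{W^{s,p}(\mathbb{R}^N)}=\lambda_0^{N-s\,p}\,[\varphi]^p_{W^{s,p}(\mathbb{R}^N)}$, while the change of variable $x=\lambda_0\,y$, together with the identity $d_\Omega(\lambda_0\,y)=\lambda_0\,d_{\Omega_{\lambda_0}}(y)\le\lambda_0\,y_N$ and the fact that $\mathrm{supp}(\varphi)\subset\Omega_{\lambda_0}$, yields
\[
\int_\Omega\frac{|\widetilde{\varphi}|^p}{d_\Omega^{s\,p}}\,dx\ge\lambda_0^{N-s\,p}\int_{\mathbb{H}^N_+}\frac{|\varphi|^p}{y_N^{s\,p}}\,dy.
\]
Taking the quotient, the Hardy ratio of $\widetilde{\varphi}$ on $\Omega$ is bounded above by that of $\varphi$ on $\mathbb{H}^N_+$; passing to the infimum in $\varphi$ gives $\mathfrak{h}_{s,p}(\Omega)\le\mathfrak{h}_{s,p}(\mathbb{H}^N_+)$.

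The only non-trivial ingredient is the identity $\bigcup_\lambda\Omega_\lambda=\mathbb{H}^N_+$; choosing $x_0$ as a nearest point to an interior point (rather than as an arbitrary boundary point) is essential, as it secures the genuine interior ball $B_r(r\,e_N)\subset\Omega$ whose dilates exhaust the whole half-space and avoids any discussion of the tangent cone at a non-smooth boundary point. Everything else is routine bookkeeping of the two scalings.
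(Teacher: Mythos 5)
Your argument is correct: every step checks out, including the two points that actually require care, namely that choosing $x_0$ as the foot of a nearest point from an interior point forces the (then unique) supporting hyperplane to be the tangent plane of the interior ball $B_r(z_0)$, and that the dilates $\Omega_\lambda=\Omega/\lambda$ of a convex set with $0\in\partial\Omega$ contain the tangent balls $B_{r/\lambda}\big((r/\lambda)e_N\big)$, which exhaust $\mathbb{H}^N_+$, so every compact support fits inside some $\Omega_{\lambda_0}$. The scalings of the seminorm and of the weighted norm, and the comparison $d_{\Omega_{\lambda_0}}\le y_N$ coming from $\Omega_{\lambda_0}\subset\mathbb{H}^N_+$, are all correctly bookkept, and the weight inequality goes in the right direction.

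The underlying idea — blow up $\Omega$ at a boundary point admitting a supporting hyperplane, so that the Hardy quotient degenerates to the half-space one — is the same as in the paper, but your execution differs in two respects. The paper first treats $N=1$ by shrinking a fixed $\psi\in C^\infty_0(\mathbb{H}^1_+)$ into a bounded interval and letting the scale go to zero (obtaining the half-line quotient as a limit, in fact an exact identity for small scales), and then for $N\ge 2$ it refers to the local-case argument of Marcus--Mizel--Pinchover, which rests on the a.e.\ existence of a tangent hyperplane for convex boundaries. Your proof is instead self-contained and uniform in the dimension: the interior-ball trick produces a single ``good'' boundary point without invoking a.e.\ differentiability of $\partial\Omega$, the monotone exhaustion $\bigcup_\lambda \Omega_\lambda=\mathbb{H}^N_+$ replaces the limiting procedure, and for each fixed test function you get a one-shot inequality (via $d_\Omega\le d_{\mathbb{H}^N_+}$) rather than a limit of quotients. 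What the paper's route buys is a direct link to the classical local result and slightly less geometric set-up; what yours buys is a shorter, fully explicit argument in all dimensions with no external citation and no limiting step.
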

\begin{proof}
In dimension $N=1$, we suppose $\Omega\subsetneq\mathbb{R}$ to be a bounded interval. Thanks to \eqref{scalina}, we can assume that $\Omega=I=(0,1)$. We take $\psi\in C^\infty_0(\mathbb{H}^1_+)$ and define the rescaled function
\[
\psi_\varepsilon(t)=\psi\left(\frac{t}{\varepsilon}\right).
\]
We observe that for $\varepsilon>0$ sufficiently small, we have $\psi_\varepsilon\in C^\infty_0(I)$. We compute
\[
[\psi_\varepsilon]^p_{W^{s,p}(\mathbb{R})}=\varepsilon^{1-s\,p}\,[\psi]^p_{W^{s,p}(\mathbb{R})},
\]
and 
\[
\int_I \frac{|\psi_\varepsilon|^p}{d_I^{s\,p}}\,dt=\int_0^1 \frac{\left|\psi\left(\dfrac{t}{\varepsilon}\right)\right|^p}{(\min\{t,1-t\})^{s\,p}}\,dt=\varepsilon^{1-s\,p}\,\int_0^\frac{1}{\varepsilon} \frac{|\psi(\tau)|^p}{\left(\min\left\{\tau,\varepsilon^{-1}-\tau\right\}\right)^{s\,p}}\,d\tau.
\]
By recalling that $\psi$ is compactly supported, we get that for $0<\varepsilon\ll 1$ we have
\[
\min\left\{\tau,\varepsilon^{-1}-\tau\right\}=\tau,\qquad \mbox{ for } \tau \mbox{ in the support ot } \psi.
\]
In conclusion, we get for every $\psi\in C^\infty_0(\mathbb{H}^1_+)$
\[
\mathfrak{h}_{s,p}(I)\le\lim_{\varepsilon\to 0^+} \frac{[\psi_\varepsilon]^p_{W^{s,p}(\mathbb{R})}}{\displaystyle\int_I \frac{|\psi_\varepsilon|^p}{d_I^{s\,p}}\,dt}=\lim_{\varepsilon\to 0^+}\frac{[\psi]^p_{W^{s,p}(\mathbb{R})}}{\displaystyle\int_0^\frac{1}{\varepsilon} \frac{|\psi(\tau)|^p}{\tau^{s\,p}}\,d\tau}=\frac{[\psi]^p_{W^{s,p}(\mathbb{R})}}{\displaystyle\int_0^{+\infty} \frac{|\psi(\tau)|^p}{\tau^{s\,p}}\,d\tau}.
\]
By arbitrariness of $\psi$, this gives the claimed inequality. 
\par
For the case $N\ge 2$, we can repeat the same proof of \cite[Theorem 5]{MMP}, which deals with the local case, up to some very minor modifications. We just recall that the proof in \cite{MMP} is based on a scaling argument as in the one-dimensional case exposed above, together with the fact that a convex set admits a tangent hyperplane at almost every boundary point.
\end{proof}

\subsection{The case of the half-space}
For an half-space, we can determine the sharp Hardy constant without restrictions on the product $s\,p$.
\begin{teo}
\label{teo:half}
Let $1<p<\infty$ and $0<s<1$. For every $N\ge 1$  we have
\[
\mathfrak{h}_{s,p}(\mathbb{H}^N_+)=C_{N,s\,p}\,\Lambda_{s,p},
\]
where $\Lambda_{s,p}$ and $C_{N,s\,p}$ are defined by \eqref{cost1d} and \eqref{costd}, respectively.
Moreover, such a constant is not attained.
\end{teo}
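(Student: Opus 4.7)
The plan is to prove the equality by combining the supersolution bound of Theorem \ref{teo:supersoluzioniconv} with a direct construction of recovery sequences, and then to rule out attainment via Proposition \ref{prop:nonne}.

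\textbf{Lower bound.} I would first observe that for every $N\ge 1$ and every exponent $\beta$ with $-1/(p-1)<\beta<sp/(p-1)$, Theorem \ref{teo:supersoluzioniconv}(2) yields that $U_\beta=d_{\mathbb{H}^N_+}^\beta=(x_N)_+^\beta$ is a positive local weak (super)solution of \eqref{equazione} on $\mathbb{H}^N_+$ with constant $\lambda=C_{N,sp}\,\lambda(\beta)$. The characterization \eqref{charintro} (Theorem 1.1 of \cite{BBZ}, recalled in the Introduction) then gives
\[
\mathfrak{h}_{s,p}(\mathbb{H}^N_+)\ge C_{N,sp}\,\lambda(\beta),\qquad \text{for every admissible }\beta.
\]
Optimising in $\beta$ via Proposition \ref{lm:costantibeta}(1), the supremum on the right is attained at $\beta=(sp-1)/p$ (which lies in the admissible interval for any $1<p<\infty$ and $0<s<1$) and equals $C_{N,sp}\,\Lambda_{s,p}$. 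This yields $\mathfrak{h}_{s,p}(\mathbb{H}^N_+)\ge C_{N,sp}\,\Lambda_{s,p}$.

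\textbf{Upper bound (sharpness).} For the matching upper bound, I would use the family of trial functions
\[
u_{\varepsilon,M}(x)=U_{\beta_\varepsilon}(x_N)\,\eta_M(x_N)\,\chi(x'),\qquad \beta_\varepsilon:=\frac{sp-1}{p}+\varepsilon,
\]
where $\eta_M\in C^\infty_0((-\infty,M))$ is a cutoff in the normal variable equal to $1$ on $[0,M/2]$, and $\chi\in C^\infty_0(\mathbb{R}^{N-1})$ is a cutoff in the tangential variables (to be sent to the constant $1$). The shift $\varepsilon>0$ in the exponent makes the power integrable near the boundary: by Lemma \ref{lm:potenzesobolev}, $U_{\beta_\varepsilon}\in W^{s,p}((0,M))$, so Lemma \ref{lm:lemmaproduct} guarantees $u_{\varepsilon,M}\in \widetilde{W}^{s,p}_0(\mathbb{H}^N_+)$. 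The key computation is then to show
\[
\lim_{M\to\infty}\lim_{\varepsilon\to 0^+}\frac{[u_{\varepsilon,M}]^p_{W^{s,p}(\mathbb{R}^N)}}{\displaystyle\int_{\mathbb{H}^N_+}\frac{|u_{\varepsilon,M}|^p}{x_N^{sp}}\,dx}=C_{N,sp}\,\Lambda_{s,p}.
\]
The numerator is estimated using Lemma \ref{lm:lemmaproduct} and the sharp bound \eqref{stima1dsharp} from Remark \ref{oss:miserve!}; the remainder terms produced by $\eta_M$ and $\chi$ become negligible as $\varepsilon\to 0$ and then $M\to\infty$, since the leading divergence is of order $M^{\beta_\varepsilon p-sp+1}/(\beta_\varepsilon p-sp+1)=M^{\varepsilon p}/(\varepsilon p)$, which matches the behaviour of the denominator. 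For $N\ge 2$, the tangential cutoffs are handled by a Fubini-type argument, using precisely the integral identity \eqref{magic} to produce the constant $C_{N,sp}$. Passing to the limit $\varepsilon\to 0$ in the one-dimensional integrals in \eqref{stima1dsharp} yields $\Lambda_{s,p}$ by Monotone/Dominated convergence. This is the main technical step: the principal obstacle is to show that all the remainder contributions (from cutoffs and from the region $\{x_N>M\}$) are of lower order relative to the divergence coming from the boundary behaviour $x_N^{\beta_\varepsilon p}$ near $x_N=0$.

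\textbf{Non-attainment.} Finally, the half-space $\mathbb{H}^N_+$ has continuous (flat) boundary at every point, so it is locally continuous in the sense of Proposition \ref{prop:nonne}. By Theorem \ref{teo:supersoluzioniconv}(2) applied with $\beta=(sp-1)/p$, the function $U_{(sp-1)/p}=d_{\mathbb{H}^N_+}^{(sp-1)/p}$ is a positive local weak solution of \eqref{equazione} with $\lambda=C_{N,sp}\,\Lambda_{s,p}=\mathfrak{h}_{s,p}(\mathbb{H}^N_+)$ and satisfies the two-sided bound \eqref{dalbasso} with equality (hence with constant $C=1$). Proposition \ref{prop:nonne} therefore applies and concludes that $\mathfrak{h}_{s,p}(\mathbb{H}^N_+)$ is not attained.
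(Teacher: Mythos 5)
Your proposal is correct and follows essentially the same route as the paper: the lower bound via the supersolution characterization \eqref{charintro} with Theorem \ref{teo:supersoluzioniconv} optimized at $\beta=(s\,p-1)/p$, sharpness via trial functions built from $x_N^{(s\,p-1)/p+\varepsilon}$ with cutoffs and the slicing identity \eqref{magic}, and non-attainment via Proposition \ref{prop:nonne}. The only (harmless) difference is organizational: the paper first proves the $N=1$ sharpness and then reduces $N\ge 2$ to it using a generic profile $\eta(x_N)$ together with an $L^p$-normalized tangential cutoff $\chi_M$ sent to infinity, whereas you merge everything into a single trial family and let the diverging weighted denominator absorb the cutoff remainders.
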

\begin{proof}
By combining \eqref{charintro} and Theorem \ref{teo:supersoluzioniconv} for $\Omega=\mathbb{H}^N_+$, we immediately obtain
\[
\mathfrak{h}_{s,p}(\mathbb{H}_+^N)\ge C_{N,s\,p}\,\lambda(\beta),\qquad \mbox{ for every } -\frac{1}{p-1}<\beta<\frac{s\,p}{p-1}.
\]
Moreover, by Proposition \ref{lm:costantibeta}, we know that the right-hand side is maximal for $\beta=(s\,p-1)/p$ and thus 
\[
\mathfrak{h}_{s,p}(\mathbb{H}_+^N)\ge C_{N,s\,p}\,
\lambda\left(\frac{s\,p-1}{p}\right)=C_{N,s\,p}\,
\Lambda_{s,p}.
\]
In order to prove that the right-hand side actually gives the sharp constant, we distinguish two cases: $N=1$ and $N\ge 2$. We will show that the latter reduces to the former: this is quite a standard fact for the Hardy inequality, but we prefer to give the details, since some non-trivial computations are needed. For the case $N=1$, we will use a slightly different family of trial functions with respect to \cite{FS, FSspace}: this permits to treat the cases $s\,p<1$ and $s\,p\ge 1$ at the same time.
\vskip.2cm\noindent
{\it Sharpness: case $N=1$.} We need to prove that
\[
\mathfrak{h}_{s,p}(\mathbb{H}^1_+) \le \Lambda_{s,p}.
\]
We take a cut-off function $\psi\in C^\infty_0\left(\left(-\infty,\,2\right)\right)$ such that
\[
0\le\psi\le1,\qquad\psi\equiv 1, \mbox{ on } \in [0,1],\qquad|\psi'|\le \widetilde{C},
\]
and we use the trial function
\[
\phi_{\beta}=U_\beta\,\psi,\qquad \mbox{ with } \frac{s\,p-1}{p}<\beta<s.
\]
According to Lemma \ref{lm:potenzesobolev} and Lemma \ref{lm:lemmaproduct}, this function belongs to $\widetilde{W}^{s,p}_0(\mathbb{H}^1_+)$. In light of the estimate \eqref{eq:stimaseminorma1} and the properties of the cut-off, we get
\begin{equation}\label{eq:quoziente1d}
\begin{split}
\mathfrak{h}_{s,p}(\mathbb{H}^1_+)&\le\frac{[\phi_\beta]^p_{W^{s,p}(\mathbb{R})}}{\displaystyle\int_0^{+\infty}\dfrac{|\phi_\beta(x)|^p}{x^{s\,p}}\,dx}\le  \frac{[U_\beta\,\psi]^p_{W^{s,p}((0,2))}}{\displaystyle\int_0^{2} \frac{(U_\beta\,\psi)^p}{x^{s\,p}}\,dx}+\frac{2}{s\,p}+\frac{2^{1+p-s\,p}}{s\,p}\,\widetilde{C}^p\,\frac{\|U_\beta\|^p_{L^p((0,2))}}{\displaystyle\int_0^{2} \frac{(U_\beta\,\psi)^p}{x^{s\,p}}\,dx}.
\end{split}
\end{equation}
We evaluate separately the two quotients on the right-hand side. For the first one, by using the estimate \eqref{eq:stimaseminorma2}, we have
\[
\begin{split}
\frac{[U_\beta\,\psi]_{W^{s,p}((0,2))}}{\displaystyle\left(\int_0^{2} \frac{(U_\beta\,\psi)^p}{x^{s\,p}}\,dx\right)^\frac{1}{p}}\le  \frac{[U_\beta]_{W^{s,p}((0,2))}}{\displaystyle\left(\int_0^{2} \frac{(U_\beta\,\psi)^p}{x^{s\,p}}\,dx\right)^\frac{1}{p}}+\left(\frac{C}{s\,(1-s)}\right)^{\frac{1}{p}}\,\widetilde{C}^s\,\frac{\|U_\beta\|_{L^p((0,2))}}{\displaystyle\left(\int_0^{2} \frac{(U_\beta\,\psi)^p}{x^{s\,p}}\,dx\right)^\frac{1}{p}}.
\end{split}
\]
Moreover, by recalling the definition of $U_\beta$, we note that
\[ 
\lim_{\beta\to \left(\frac{s\,p-1}{p}\right)^+}\int_0^{2} \frac{\left(U_\beta\,\psi\right)^p}{x^{s\,p}}\,dx =
\lim_{\beta\to \left(\frac{s\,p-1}{p}\right)^+}\int_0^{2}\dfrac{\psi^{p}}{x^{s\,p-\beta\,p}}\,dx=+\infty,
 \]
thus the denominator diverges as $\beta$ goes to $(s\,p-1)/p$. Coming back to \eqref{eq:quoziente1d}, this entails that
\[
\begin{split}
\mathfrak{h}_{s,p}(\mathbb{H}^1_+)&\le \limsup_{\beta\to \left(\frac{s\,p-1}{p}\right)^+}\left[\frac{[U_\beta\,\psi]^p_{W^{s,p}((0,2))}}{\displaystyle\int_0^{2} \frac{(U_\beta\,\psi)^p}{x^{s\,p}}\,dx}+\frac{2}{s\,p}+\frac{2^{p+1-s\,p}}{s\,p}\,\widetilde{C}^p\,\frac{\|U_\beta\|^p_{L^p((0,2))}}{\displaystyle\int_0^{2} \frac{(U_\beta\,\psi)^p}{x^{s\,p}}\,dx}\right]\\
&\le \limsup_{\beta\to \left(\frac{s\,p-1}{p}\right)^+}\frac{[U_\beta]^p_{W^{s,p}((0,2))}}{\displaystyle\int_0^{2} \frac{(U_\beta\,\psi)^p}{x^{s\,p}}\,dx}+\frac{2}{s\,p}\le \limsup_{\beta\to \left(\frac{s\,p-1}{p}\right)^+}\frac{[U_\beta]^p_{W^{s,p}((0,2))}}{\displaystyle\int_0^{1} \frac{(U_\beta)^p}{x^{s\,p}}\,dx}+\frac{2}{s\,p}.
\end{split}
\]
We claim that 
\begin{equation}
\label{manca}
\limsup_{\beta\to \left(\frac{s\,p-1}{p}\right)^+} \frac{[U_\beta]^p_{W^{s,p}((0,2))}}{\displaystyle\int_0^1 \frac{(U_\beta)^p}{x^{s\,p}}\,dx}\le 2\,\int_0^1 \dfrac{\left|1-t^\frac{s\,p-1}{p}\right|^p}{(1-t)^{1+s\,p}}\,dt,
\end{equation}
this would conclude the proof, by recalling the definition \eqref{cost1d} of $\Lambda_{s,p}$.
By using the form of $U_\beta$ we have 
\[
\int_0^1 \frac{(U_\beta)^p}{x^{s\,p}}\,dx=\int_0^1 x^{\beta\,p-s\,p}\,dt=\frac{1}{\beta\,p-s\,p+1}.
\]
Thus in order to prove \eqref{manca}, we just need to show that 
\[
\limsup_{\beta\to \left(\frac{s\,p-1}{p}\right)^+}(\beta\,p-s\,p+1)\,[U_\beta]^p_{W^{s,p}(0,2)}\le 2\,\int_0^1 \dfrac{\left|1-t^\frac{s\,p-1}{p}\right|^p}{(1-t)^{1+s\,p}}\,dt.
\]
By recalling the estimate \eqref{stima1dsharp} from Remark \ref{oss:miserve!}, we have 
\[
\begin{split}
[U_\beta]^p_{W^{s,p}((0,2))}\le \left(\int_0^1\dfrac{|1-t^\beta|^p}{|1-t|^{1+s\,p}}\,\left(1+t^{s\,p-p\,\beta-1}\right)\,dt\right)\,\frac{2^{\beta\,p-s\,p+1}}{\beta\,p-s\,p+1}.
\end{split}
\]
Hence, by taking the limit as $\beta$ goes to $(s\,p-1)/p$ and using the Dominated Convergence Theorem, we get \eqref{manca}, as desired. This proves the sharpness for $N=1$.
\vskip.2cm\noindent
{\it Sharpness: case $N\ge 2$.} We will show that this can reduced to the previous case, 
by proceeding as in \cite[Theorem 1.1]{FS} and \cite[Proposition 3.2]{MSK}. Let $\eta\in C^\infty_0((0, +\infty))$ and $\chi\in C^\infty_0((-1,1)^{N-1})$, we use the test function
\[
\varphi=\chi_M(x')\, \eta(x_N),\qquad \mbox{ where } \chi_M(x'):=\frac{M^{\frac{1-N}{p}}}{\|\chi\|_{L^p(\mathbb{R}^{N-1})}}\, \chi\left(\frac{x'}{M}\right).
\]
Observe that by construction the function $\chi_M$ has compact support on $(-M,M)^{N-1}$ and unit $L^p$ norm. We thus obtain
	\[ 
	\begin{split}
	\mathfrak{h}_{s,p}(\mathbb{H}^N_+) \le \dfrac{\left[ \eta\, \chi_M \right]^p_{W^{s,p}(\mathbb{R}^N)}}{\displaystyle\int_{\mathbb{H}^N_+} \dfrac{\left(\eta\, \chi_M\right)^p}{x_N^{s\,p}} \, dx}=\dfrac{\left[ \eta \, \chi_M \right]^p_{W^{s,p}(\mathbb{R}^N)}}{\displaystyle\int_{0}^{+\infty} \dfrac{\eta^p}{x_N^{s\,p}} \, dx_N},
	\end{split}
	\]
	where in the last identity we used Fubini's Theorem and the properties of $\chi_M$. 
	In order to estimate the seminorm, we first use Minkowski's inequality
\[
	\begin{split}
	\big[ \eta \, \chi_M \big]_{W^{s,p}(\mathbb{R}^N)} \le \left( \iint_{\mathbb{R}^N\times\mathbb{R}^N}\dfrac{|\chi_M(x')|^p \,|\eta(x_N)-\eta(y_N)|^p}{|x-y|^{N+s\,p}}\,dx\,dy \right)^{\frac{1}{p}}\\
	 + \left( \iint_{\mathbb{R}^N\times\mathbb{R}^N}\dfrac{|\eta(y_N)|^p \, |\chi_M(x')-\chi_M(y')|^p}{|x-y|^{N+s\,p}}\,dx\,dy \right)^{\frac{1}{p}},
	 \end{split}
\]
	and we focus separately on the two integrals on the right-hand side. For the first integral, we use Fubini's Theorem and the identity \eqref{magic}, so to get
	\[
	\begin{split}
	\iint_{\mathbb{R}^N\times\mathbb{R}^N}&\dfrac{|\chi_M(x')|^p \,|\eta(x_N)-\eta(y_N)|^p}{|x-y|^{N+s\,p}}\,dx\,dy \\
	&= C_{N,s\,p}\,\left( \int_{\mathbb{R}^{N-1}} |\chi_M(x')|^p \, dx'\right)\,\left(\iint_{\mathbb{R}\times \mathbb{R}} \frac{|\eta(x_N)-\eta(y_N)|^p}{|x_N-y_N|^{1+s\,p}}dx_N\,dy_N\right) =C_{N,s\,p} \,[\eta]^p_{W^{s,p}(\mathbb{R})}.
	\end{split}
	\]
	On the other hand, by using a computation similar to \eqref{magic}, we have
\[ \int_{\mathbb{R}} \frac{1}{\left( |x_N-y_N|^2 + |x'-y'|^2 \right)^{\frac{N+s\,p}{2}}} \, dx_N = \frac{2\,\displaystyle\int_0^{+\infty} (1+t^2)^{-\frac{N+s\,p}{2}}\,dt}{|x'-y'|^{N-1+s\,p}}=\frac{C}{|x'-y'|^{N-1+s\,p}}.
	\]
	Thus, it holds 
	\[
	\begin{split}
	\iint_{\mathbb{R}^N\times\mathbb{R}^N}&\dfrac{|\eta(y_N)|^p \, |\chi_M(x')-\chi_M(y')|^p}{|x-y|^{N+s\,p}}\,dx\,dy \\
	&= C \int_{\mathbb{R}} |\eta(y_N)|^p \, dy_N\,\left( \iint_{\mathbb{R}^{N-1}\times\mathbb{R}^{N-1}} \frac{|\chi_M(x')-\chi_M(y')|^p}{|x'-y'|^{N-1+s\,p}}\,dx'\,dy'\right)\\
	&= C\, \frac{\| \eta \|^p_{L^p(\mathbb{R})}}{\|\chi\|^p_{L^p(\mathbb{R}^{N-1})}}\, \frac{[\chi ]^p_{W^{s,p}(\mathbb{R}^{N-1})}}{M^{s\,p}}.
	\end{split}
	\]
	In the last identity we used the definition of $\chi_M$ and a change of variable.
	Then, it follows that
	\[
	\begin{split}
	\Big(\mathfrak{h}_{s,p}(\mathbb{H}^N_+)\Big)^\frac{1}{p} &\le \left(C_{N,s\,p}\right)^\frac{1}{p}\,\dfrac{[\eta]_{W^{s,p}(\mathbb{R})}}{\displaystyle \left(\int_{0}^{+\infty}\dfrac{\eta^{p}}{x_N^{s\,p}} \, dx \right)^\frac{1}{p}}+ C^\frac{1}{p}\,\frac{\| \eta \|_{L^p(\mathbb{R})}}{M^s\,\|\chi\|_{L^p(\mathbb{R}^{N-1})}}\, \frac{[ \chi]_{W^{s,p}(\mathbb{R}^{N-1})}}{\displaystyle \left(\int_{0}^{+\infty}\dfrac{\eta^{p}}{x_N^{s\,p}} \, dx \right)^\frac{1}{p}} \\
	\end{split}
	\]
	By letting $M$ go to $+\infty$ and thanks to the arbitrariness of $\eta \in C^{\infty}_0((0, +\infty))$, we obtain
	\[ 
	\mathfrak{h}_{s,p}(\mathbb{H}^N_+) \le C_{N,s\,p} \; \mathfrak{h}_{s,p}(\mathbb{H}^1_+)=C_{N,s\,p}\,\Lambda_{s,p},
	\]
as desired. The last identity follows from the sharpness for $N=1$.
\vskip.2cm\noindent
The fact that $\mathfrak{h}_{s,p}(\mathbb{H}^N_+)$ is not attained follows directly from Proposition \ref{prop:nonne}, since by Theorem \ref{teo:supersoluzioniconv} we found a local weak solution of \eqref{equazione} with $\lambda=\mathfrak{h}_{s,p}(\mathbb{H}^N_+)$, of the form
\[
u=d_{\mathbb{H}^N_+}^\frac{s\,p-1}{p}.
\]
The proof is over.

\end{proof}

\subsection{The case $s\,p\ge 1$}
\begin{teo}
\label{teo:sp}
Let $1<p<\infty$ and $0<s<1$ be such that $s\,p\ge 1$. Then for every $\Omega\subsetneq\mathbb{R}^N$ open convex set, we have
\[
\mathfrak{h}_{s,p}(\Omega)=C_{N,s\,p}\,\Lambda_{s,p},
\]
where $\Lambda_{s,p}$ and $C_{N,s\,p}$ are defined by \eqref{cost1d} and \eqref{costd}, respectively.
Moreover, such a constant is not attained.
\end{teo}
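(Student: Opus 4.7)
The strategy is to combine the upper bound from Proposition \ref{prop:convessiH}, valid for any open convex set, with a matching lower bound obtained through the characterization \eqref{charintro} and the supersolution constructed in Theorem \ref{teo:supersoluzioniconv}. Non-attainment will then follow from the general criterion in Proposition \ref{prop:nonne}.

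First, I would observe that the assumption $s\,p \ge 1$ is exactly what is needed to place the exponent
\[
\beta_0 := \frac{s\,p-1}{p}
\]
in the admissible range $0 \le \beta < s\,p/(p-1)$ of part (1) of Theorem \ref{teo:supersoluzioniconv}. Applying that theorem to the convex set $\Omega$, the function $U_{\beta_0} = d_\Omega^{\beta_0}$ (extended by zero outside $\Omega$) is a positive local weak supersolution of \eqref{equazione} with
\[
\lambda = C_{N,s\,p}\,\lambda(\beta_0) = C_{N,s\,p}\,\Lambda_{s,p},
\]
where the identification $\lambda(\beta_0) = \Lambda_{s,p}$ is precisely the statement of part (1) of Proposition \ref{lm:costantibeta}. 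Invoking the characterization \eqref{charintro} from \cite{BBZ}, this yields
\[
\mathfrak{h}_{s,p}(\Omega) \ge C_{N,s\,p}\,\Lambda_{s,p} = \mathfrak{h}_{s,p}(\mathbb{H}^N_+),
\]
the last equality being Theorem \ref{teo:half}. Combined with the reverse inequality $\mathfrak{h}_{s,p}(\Omega) \le \mathfrak{h}_{s,p}(\mathbb{H}^N_+)$ from Proposition \ref{prop:convessiH}, this proves the value of the sharp constant.

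For non-attainment, I would apply Proposition \ref{prop:nonne}. Since $\Omega$ is a proper open convex subset of $\mathbb{R}^N$, its boundary is locally the graph of a convex (hence continuous) function at every one of its points, so in particular $\partial\Omega$ is locally continuous at some $x_0\in\partial\Omega$. Moreover, the supersolution $u = d_\Omega^{\beta_0}$ exhibited above corresponds to the critical value $\lambda = \mathfrak{h}_{s,p}(\Omega)$ and trivially satisfies the pointwise lower bound \eqref{dalbasso} (with constant $C=1$), because it coincides with $d_\Omega^{(s\,p-1)/p}$. The hypotheses of Proposition \ref{prop:nonne} are therefore met, and the sharp constant is not attained.

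There is no genuine obstacle to overcome here, since the heavy lifting has already been done: the sharp value was computed for the half-space in Theorem \ref{teo:half}, the scaling/projection argument giving $\mathfrak{h}_{s,p}(\Omega)\le\mathfrak{h}_{s,p}(\mathbb{H}^N_+)$ is established in Proposition \ref{prop:convessiH}, and the core construction of the supersolution $d_\Omega^{\beta_0}$ on a general convex set is contained in Theorem \ref{teo:supersoluzioniconv}. The only subtle point worth emphasizing in the write-up is why the construction breaks down for $s\,p<1$: in that regime the optimal exponent $\beta_0=(s\,p-1)/p$ becomes negative and falls outside the range allowed by part (1) of Theorem \ref{teo:supersoluzioniconv} (cf.\ Remark \ref{oss:meglio}), which is precisely why a separate argument, available only for $p=2$, is needed in Theorem \ref{teo:p2}.
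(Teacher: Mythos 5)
Your proposal is correct and follows essentially the same route as the paper: the lower bound via the characterization \eqref{charintro} together with the supersolution $d_\Omega^{(s\,p-1)/p}$ from Theorem \ref{teo:supersoluzioniconv} and the maximality of $\lambda\left(\frac{s\,p-1}{p}\right)=\Lambda_{s,p}$ from Proposition \ref{lm:costantibeta}, the upper bound from Proposition \ref{prop:convessiH} combined with Theorem \ref{teo:half}, and non-attainment via Proposition \ref{prop:nonne}. Your extra remark on why the boundary of a convex set is locally continuous, and on the failure of the construction for $s\,p<1$, matches the paper's discussion in Remark \ref{oss:meglio}.
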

\begin{proof}
We suppose that $\Omega$ is not a half-space, otherwise there is nothing to prove. By appealing to \eqref{charintro} and Theorem \ref{teo:supersoluzioniconv}, we get  
\begin{equation}
\label{bazo}
\mathfrak{h}_{s,p}(\Omega)\ge C_{N,s\,p}\,\lambda(\beta),\qquad \mbox{ for every } 0\le \beta<\frac{s\,p}{p-1}.
\end{equation}
Again by Proposition \ref{lm:costantibeta}, we know that the right-hand side is maximal for $\beta=(s\,p-1)/p$: such a choice is feasible, thanks to the assumption $s\,p\ge 1$. We thus get
\[
\mathfrak{h}_{s,p}(\Omega)\ge C_{N,s\,p}\,\lambda\left(\frac{s\,p-1}{p}\right)=C_{N,s\,p}\,\Lambda_{s,p}.
\]
On the other hand, by Proposition \ref{prop:convessiH} we know that
\[
\mathfrak{h}_{s,p}(\Omega) \le \mathfrak{h}_{s,p}(\mathbb{H}_+^N).
\]
By combining the latter with Theorem \ref{teo:half}, we finally get
\[
\mathfrak{h}_{s,p}(\Omega)\le C_{N,s\,p}\,\Lambda_{s,p},
\]
as well. This proves that $\mathfrak{h}_{s,p}(\Omega)$ has the claimed expression.
\par
Finally, the fact that $\mathfrak{h}_{s,p}(\Omega)$ is not attained follows directly from Proposition \ref{prop:nonne}, since by Theorem \ref{teo:supersoluzioniconv} we found a local weak supersolution of \eqref{equazione} with $\lambda=\mathfrak{h}_{s,p}(\Omega)$, having the form
\[
u=d_\Omega^\frac{s\,p-1}{p}.
\]
This concludes the proof.
\end{proof}

\begin{oss}[A lower bound in the case $s\,p<1$]
\label{oss:forbice}
As already said, in the case $s\,p<1$ the maximal choice for $\beta$ is not feasbile. In this case we can choose in \eqref{bazo} the exponent $\beta=0$ and get at least a lower bound, i.e.
\[
\mathfrak{h}_{s,p}(\Omega)\ge C_{N,s\,p}\,\lambda(0)=C_{N,s\,p}\,\frac{2}{s\,p}.
\] 
\end{oss}
\subsection{The case $0<s<1$ and $p=2$}
We first highlight the following consequence of Lemma \ref{lm:dyda}. The resulting inequality is the same as \cite[Corollary 1.3]{Dy} by Bart\l omiej Dyda.
\begin{prop}
\label{prop:dyda}
Let $0<s<1$ and let $a<b$ be two real numbers. We have the following one-dimensional Hardy-type inequality
\begin{equation}
\label{dydone}
\Lambda_{s,2}\,\int_a^b |u|^2\,\left[\frac{1}{t-a}+\frac{1}{b-t}\right]^{2\,s}\,dt\le [u]_{W^{s,2}(\mathbb{R})}^2,
\end{equation}
for every $u\in \widetilde{W}^{s,2}_0((a,b))$.
In particular, we have 
\[
\mathfrak{h}_{s,2}((a,b))=\Lambda_{s,2}.
\]
and such a constant is not attained.
\end{prop}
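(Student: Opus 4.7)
The plan is to reduce, by translation and dilation, to the unit interval $I=(0,1)$, since both sides of \eqref{dydone} scale as $\mu^{1-2s}$ under $t\mapsto \mu\,t+\tau$; the target inequality then becomes
\[
\Lambda_{s,2}\int_I \frac{|u|^2}{\big(t\,(1-t)\big)^{2s}}\,dt \le [u]^2_{W^{s,2}(\mathbb{R})},\qquad u\in \widetilde{W}^{s,2}_0(I).
\]
The engine is Lemma \ref{lm:dyda} applied with the maximizer $\beta=(2s-1)/2$ of $\lambda(\beta)$ identified in Proposition \ref{lm:costantibeta}, which in the Hilbertian case gives $\lambda((2s-1)/2)=\Lambda_{s,2}$. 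This supplies
\[
f_\ast(t):=\big(t\,(1-t)\big)^{\frac{2s-1}{2}},\qquad t\in I,
\]
extended by $0$ outside $I$, as a positive local weak solution of
\[
(-\Delta)^s f_\ast=\Lambda_{s,2}\,\frac{f_\ast}{(t(1-t))^{2s}}\quad\text{in }I.
\]

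To derive the inequality, I would proceed by a Picone-type test. For a non-negative $\varphi\in C^\infty_0(I)$, the function $\varphi^2/f_\ast$ is Lipschitz and compactly supported in $I$ (because $f_\ast$ is bounded below by a positive constant on every compact subset of $I$), hence belongs to $\widetilde{W}^{s,2}_0(I)$ and is admissible as a test function in the equation solved by $f_\ast$. Applying the discrete Picone inequality (Lemma \ref{lm:picone} with $p=2$) pointwise under the double integral gives
\[
\Lambda_{s,2}\int_I \frac{\varphi^2}{(t(1-t))^{2s}}\,dt\le [\varphi]^2_{W^{s,2}(\mathbb{R})},
\]
which extends to signed $\varphi$ and then to every $u\in \widetilde{W}^{s,2}_0(I)$ by the standard density plus Fatou argument already used in the Remark after \eqref{hardy!}.

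For the identity $\mathfrak{h}_{s,2}(I)=\Lambda_{s,2}$: the lower bound is immediate from \eqref{dydone} together with the pointwise bound $t\,(1-t)\le \min\{t,1-t\}=d_I(t)$, while the upper bound is Proposition \ref{prop:convessiH} combined with Theorem \ref{teo:half} (noting $C_{1,2s}=1$). Non-attainment follows from Proposition \ref{prop:nonne}: condition (a) holds since $1/d_I\notin L^1(I)$ by direct integration, and condition (b) is realized by $u=f_\ast$, which is a local weak supersolution of \eqref{equazione} with $\Omega=I$, $p=2$, and $\lambda=\mathfrak{h}_{s,2}(I)$ (because the weight $(t(1-t))^{-2s}$ dominates $d_I^{-2s}$), and whose quotient
\[
\frac{f_\ast(t)}{d_I(t)^{\frac{2s-1}{2}}}=\big(\max\{t,1-t\}\big)^{\frac{2s-1}{2}}
\]
is bounded below by a positive constant on $I$, irrespective of the sign of $2s-1$.

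The main obstacle is really only the packaging of Lemma \ref{lm:dyda} into the Picone step, namely verifying admissibility of $\varphi^2/f_\ast$ and the finiteness of the relevant singular integrals; once that bookkeeping is in place, the rest of the argument is an assembly of already-established ingredients (Lemma \ref{lm:picone}, Proposition \ref{prop:convessiH}, Theorem \ref{teo:half}, Proposition \ref{prop:nonne}).
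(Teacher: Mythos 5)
Your proposal is correct and follows essentially the same route as the paper: after rescaling to $(0,1)$, it uses the Kelvin-transplanted solution $f_\beta$ of Lemma \ref{lm:dyda} with the optimal exponent $\beta=(2s-1)/2$ together with the discrete Picone inequality to get \eqref{dydone}, and then obtains $\mathfrak{h}_{s,2}=\Lambda_{s,2}$ and non-attainment exactly as the paper does, via the pointwise bound $t(1-t)\le d_I(t)$, Proposition \ref{prop:convessiH} with Theorem \ref{teo:half}, and Proposition \ref{prop:nonne}. The only detail worth keeping in mind is that in the Picone step the pairs with $y\notin I$ (where $f_\beta=0$) must be handled as in the proof of Proposition \ref{prop:nonne}, but there $\varphi$ vanishes too, so the inequality is trivial and your bookkeeping goes through.
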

\begin{proof}
By recalling \eqref{scalina}, we can suppose that $(a,b)=(0,1)=I$.
Then the proof of the inequality \eqref{dydone} is the same as in \cite{Dy} and it follows the same lines as that of \cite[Lemma 4.1, point (i)]{BBZ}: it is sufficient to take 
\[
f_\beta(t):=t^{2\,s-1-\beta}\,(1-t)^\beta,\qquad \mbox{ with } \beta=\frac{2\,s-1}{2},
\]
observe that this weakly solves 
\[
(-\Delta)^s u=\Lambda_{s,2}\,\frac{u}{\big(t\,(1-t)\big)^{2\,s}},\qquad \mbox{ in }  I,
\]
and then make a suitable application of the discrete Picone inequality.
\par
As for the sharp fractional Hardy constant, by observing that 
\[
\frac{1}{t-a}+\frac{1}{b-t}\ge \frac{1}{\min\{t-a,b-t\}}=\frac{1}{d_I(t)},\qquad \mbox{ for } t\in I,
\]
we easily get from \eqref{dydone} that
\[
\mathfrak{h}_{s,2}((a,b))\ge \Lambda_{s,2}.
\]
The reverse inequality can be obtained from Proposition \ref{prop:convessiH} and Theorem \ref{teo:half}. Finally, the fact that the sharp constant is not attained can be inferred again from Proposition \ref{prop:nonne}.
\end{proof}
By combining the previous one-dimensional result with a decomposition of the Gagliardo-Slobodecki\u{\i} seminorm, taken from \cite{LS} (see also \cite[Chapter 1, Section 5]{Da}), we can finally compute the sharp fractional Hardy constant of a convex set for $p=2$, {\it without restrictions on} $0<s<1$. 
\par
This complements \cite[Theorem 5, points (i) \& (ii)]{FMT}, where the case $0<s<1/2$ was left open. 
\begin{teo}
\label{teo:p2}
Let $0<s<1$, then for every $\Omega\subsetneq\mathbb{R}^N$ open convex set, we have
\[
\mathfrak{h}_{s,2}(\Omega)=C_{N,s,2}\,\Lambda_{s,2},
\]
where $\Lambda_{s,2}$ and $C_{N,s,2}$ are defined by \eqref{cost1d} and \eqref{costd}, respectively.
Moreover, such a constant is not attained.
\end{teo}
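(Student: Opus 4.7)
The plan is to obtain matching upper and lower bounds on $\mathfrak{h}_{s,2}(\Omega)$, then deduce non-attainment from the rigidity of the chain of inequalities. The upper bound comes for free from Proposition \ref{prop:convessiH} combined with Theorem \ref{teo:half}, so I concentrate on the lower bound.

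The key tool is a slicing of the Gagliardo seminorm along lines. Passing to polar coordinates at the base point $x$ and then splitting $\mathbb{R}^N = \omega^\perp \oplus \mathbb{R}\omega$ for each $\omega\in S^{N-1}$, Fubini's theorem yields the identity
\[
[u]^2_{W^{s,2}(\mathbb{R}^N)} = \frac{1}{2}\int_{S^{N-1}}\int_{\omega^\perp} \big[v_{x_\omega,\omega}\big]^2_{W^{s,2}(\mathbb{R})}\,dx_\omega\,d\omega,\qquad v_{x_\omega,\omega}(t):=u(x_\omega+t\omega).
\]
For $u\in C^\infty_0(\Omega)$, each one-dimensional slice $v_{x_\omega,\omega}$ is smooth and compactly supported in the open interval $I_\omega(x_\omega):=\{t\in\mathbb{R}:x_\omega+t\omega\in\Omega\}$, which by convexity is a (possibly empty, possibly unbounded) interval. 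Applying the sharp one-dimensional Hardy inequality supplied by Proposition \ref{prop:dyda} on bounded chords and by Theorem \ref{teo:half} (case $N=1$) on half-line chords gives, on every such proper chord,
\[
\big[v_{x_\omega,\omega}\big]^2_{W^{s,2}(\mathbb{R})} \ge \Lambda_{s,2}\int_{I_\omega(x_\omega)}\frac{|v_{x_\omega,\omega}(t)|^2}{\delta_\omega(x_\omega+t\omega)^{2s}}\,dt,
\]
where $\delta_\omega(x)$ is the $1$D distance from $x$ along its chord to the chord's endpoints. Integrating and exchanging integrals leads to
\[
[u]^2_{W^{s,2}(\mathbb{R}^N)} \ge \frac{\Lambda_{s,2}}{2}\int_\Omega |u(x)|^2\left(\int_{S^{N-1}}\frac{d\omega}{\delta_\omega(x)^{2s}}\right)dx.
\]

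To convert $\delta_\omega$ into the Euclidean distance $d_\Omega$, I would use the same supporting-hyperplane comparison already exploited in the proof of Theorem \ref{teo:supersoluzioniconv}: at a nearest boundary point $\overline{x}$ of $x$, let $H$ be a supporting half-space with inward unit normal $\nu$, so that $\Omega\subset H$ and $d_H(x)=d_\Omega(x)$. The inclusion $\Omega\subset H$ forces $\delta_\omega(x)\le \delta_\omega^H(x) = d_\Omega(x)/|\omega\cdot\nu|$ for a.e. $\omega\in S^{N-1}$, hence
\[
\int_{S^{N-1}}\frac{d\omega}{\delta_\omega(x)^{2s}} \ge \frac{1}{d_\Omega(x)^{2s}}\int_{S^{N-1}}|\omega\cdot\nu|^{2s}\,d\omega.
\]
The remaining spherical integral is computed explicitly by switching to the polar angle $\phi=\arccos(\omega\cdot\nu)$ and then substituting $t=\tan\phi$, which reduces it to $\mathcal{I}(N-2;2s)$ and yields the value $2\,C_{N,s,2}$ (with $C_{N,s,2}$ as in \eqref{costd} for $p=2$). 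This gives $\mathfrak{h}_{s,2}(\Omega)\ge C_{N,s,2}\,\Lambda_{s,2}$, matching the upper bound.

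For the non-attainment, suppose $u\in\widetilde{W}^{s,2}_0(\Omega)\setminus\{0\}$ were a minimizer; by Lemma \ref{lm:basics} we may take $u>0$ a.e. in $\Omega$. Then the whole chain of inequalities above must hold with equality, so for a.e. pair $(x_\omega,\omega)$ whose chord $I_\omega(x_\omega)$ is a proper interval, the slice $v_{x_\omega,\omega}$ would be a non-trivial extremal of the sharp 1D Hardy inequality on that chord. Proposition \ref{prop:dyda} and Theorem \ref{teo:half} rule this out and force $v_{x_\omega,\omega}\equiv 0$. The only directions $\omega\in S^{N-1}$ along which chords through $\Omega$ are improper (i.e.\ coincide with all of $\mathbb{R}$) are those lying in the lineal subspace $L(\Omega)$ of $\Omega$, a proper linear subspace whenever $\Omega\neq\mathbb{R}^N$ and hence of zero surface measure in $S^{N-1}$. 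A final application of Fubini therefore gives $u\equiv 0$ a.e. in $\Omega$, the desired contradiction. The main technical delicacy is verifying the slicing identity and the validity of the 1D Hardy inequality on every slice for general $u\in\widetilde{W}^{s,2}_0(\Omega)$, but a standard density argument reduces everything to the smooth compactly supported case already discussed.
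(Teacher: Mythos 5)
Your proposal is correct and takes essentially the same route as the paper: the upper bound from Proposition \ref{prop:convessiH} and Theorem \ref{teo:half}, and the lower bound by the directional reduction formula of Loss--Sloane combined with the sharp one-dimensional inequality of Proposition \ref{prop:dyda}, which is precisely the argument the paper sketches and leaves to the reader (your supporting-hyperplane comparison and the identity $\int_{\mathbb{S}^{N-1}}|\omega\cdot\nu|^{2s}\,d\mathcal{H}^{N-1}(\omega)=2\,C_{N,s,2}$ are exactly the omitted details). Your slicing rigidity argument for non-attainment, using the non-attainment of the one-dimensional constants together with the fact that the lineality directions of a convex set $\Omega\subsetneq\mathbb{R}^N$ form a null subset of $\mathbb{S}^{N-1}$, is a sound completion of a point the paper does not spell out, modulo the routine verification that almost every slice of a putative minimizer in $\widetilde{W}^{s,2}_0(\Omega)$ belongs to $\widetilde{W}^{s,2}_0$ of its chord.
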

\begin{proof}
By joining Proposition \ref{prop:convessiH} and Theorem \ref{teo:half}, we already know that 
\[
\mathfrak{h}_{s,2}(\Omega)\le \mathfrak{h}_{s,2}(\mathbb{H}^N_+)=C_{N,s,2}\,\Lambda_{s,2}.
\]
In order to prove the reverse estimate, it is sufficient to reproduce verbatim the proof of \cite[Theorem 1.1]{LS} for convex sets, by replacing the $W^{s,p}$ seminorm on $\Omega$ there with that on $\mathbb{R}^N$. In particular, we need to use the following
\textit{reduction formula}
\[
\begin{split}
\iint_{\mathbb{R}^N\times\mathbb{R}^N}&\frac{|u(x)-u(y)|^p}{|x-y|^{N+s\,p}} \, dx \, dy \\
&= \frac{1}{2} \int_{\mathbb{S}^{N-1}} \int_{\{h\in\mathbb{R}^N:\langle h, \omega \rangle = 0\}}\left( \iint_{\mathbb{R}\times\mathbb{R}} \frac{|u(h+t\, \omega)-u(h + \tau\, \omega)|^p}{|t-\tau|^{1+s\,p}} \, dt \, d\tau\right) \, dh \, d\mathcal{H}^{N-1}(\omega),
\end{split}
\]
where $dh$ denotes the $(N-1)-$dimensional Lebesgue measure on the hyperplane $\{h\in\mathbb{R}^N:\langle h, \omega \rangle = 0\}$.
Such a formula can be proved exactly as \cite[Lemma 2.4]{LS}. By starting from this, it is sufficient to use the one-dimensional Hardy inequality of Proposition \ref{prop:dyda} for the integral on $\mathbb{R}\times\mathbb{R}$, in place of the inequality of \cite[Theorem 2.1]{LS}. 
We leave the details to the reader.
\end{proof}

\appendix

\section{Negative powers of the distance in the borderline case $s=1/2$}
\label{app:B}
We consider for $-1<\beta<0$
\[
U_\beta(t)=d_I(t)^\beta=\left(\min\{t,1-t\}\right)^\beta,\qquad \mbox{ for }t\in I=(0,1).
\]
We extend this function by $0$ outside the interval $I$. We want to estimate its fractional Laplacian of order $1/2$.
\begin{lm}
\label{lm:bum!}
Under the assumptions above, we have 
\begin{equation}
\label{claim}
(-\Delta)^\frac{1}{2} U_\beta\le \beta\,H(t)\,U_\beta(t)+\frac{2}{t\,(1-t)}\,U_\beta(t),\qquad \mbox{ in } I,
\end{equation}
in weak sense,
where $H$ is the continuous function on $I\setminus\{1/2\}$ defined by
\[
H(t)=-\frac{2}{t\,(1-t)}+\frac{2}{d_I(t)}\,\log\left(\frac{4\,t\,(1-t)}{(1-2\,t)^2}\right).
\] 
This has the following properties:
\begin{itemize}
\item it is symmetric with respect to $1/2$, i.e.
\[
H(t)=H(1-t),\qquad \mbox{ for } t\in I\setminus\left\{\frac{1}{2}\right\};
\]
\item it belongs to $L^q_{\rm loc}(I)$ for every $1\le q<\infty$;
\vskip.2cm
\item it satisfies
\[
H(t)\sim - 4\,\log\left(\frac{1}{2}-t\right)^2,\qquad \mbox{ for } t\to \frac{1}{2},
\]
thus the right-hand side of \eqref{claim} diverges to $-\infty$ as $t$ approaches $1/2$.
\end{itemize}
In particular, in this case the function $U_\beta$ is not even locally weakly $(1/2)-$superharmonic on $I$.
\end{lm}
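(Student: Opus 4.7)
The plan is to compute $(-\Delta)^{1/2}U_\beta$ directly on $I\setminus\{1/2\}$, exploiting the evenness of $d_I$ about $t=1/2$ to reduce to $t\in(0,1/2)$, and then to majorize the regional (interior) part by convexity of $\tau\mapsto\tau^\beta$. Fix $t\in(0,1/2)$, where $U_\beta(t)=t^\beta$. Splitting the defining singular integral into the four pieces on $(-\infty,0)$, $(0,1/2)$, $(1/2,1)$ and $(1,+\infty)$, the two outer integrals are elementary and sum exactly to $2\,U_\beta(t)/(t(1-t))$, producing the second term on the right-hand side of \eqref{claim}. In the third piece, the substitution $y\mapsto 1-y$ turns it into an integral on $(0,1/2)$ with weight $1/(1-t-y)^2$, yielding the pointwise identity
\[
(-\Delta)^{1/2}U_\beta(t) \;=\; \frac{2\,U_\beta(t)}{t(1-t)} \;+\; J(t),\qquad J(t):=2\,\mathrm{P.V.}\!\!\int_0^{1/2}\!(t^\beta - y^\beta)\Bigl[\tfrac{1}{(t-y)^2}+\tfrac{1}{(1-t-y)^2}\Bigr]\,dy.
\]

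For the majorization I would use the convexity of $\tau\mapsto\tau^\beta$ on $(0,\infty)$ (since $\beta<0$): the tangent-line inequality $t^\beta-y^\beta\le\beta\,t^{\beta-1}(t-y)$ holds for every $y>0$. The bracketed weight is non-negative and the residual $(t^\beta-y^\beta)-\beta\,t^{\beta-1}(t-y)$ vanishes quadratically at $y=t$, so the principal value survives the substitution of the tangent-line bound, and
\[
J(t) \;\le\; 2\beta\,t^{\beta-1}\,\mathrm{P.V.}\!\!\int_0^{1/2}\!(t-y)\Bigl[\tfrac{1}{(t-y)^2}+\tfrac{1}{(1-t-y)^2}\Bigr]\,dy.
\]
The remaining integral is now explicit: $\mathrm{P.V.}\!\int_0^{1/2}dy/(t-y)=\log\bigl(2t/(1-2t)\bigr)$ and, via $u=1-t-y$, $\int_0^{1/2}(t-y)/(1-t-y)^2\,dy=\log\bigl(2(1-t)/(1-2t)\bigr)-1/(1-t)$. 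Summing and multiplying by $2\beta\,t^{\beta-1}$ reproduces precisely $\beta\,H(t)\,U_\beta(t)$. This closes the pointwise bound for $t\in(0,1/2)$; the symmetries $U_\beta(1-t)=U_\beta(t)$ and $H(1-t)=H(t)$ extend it to $I\setminus\{1/2\}$, and multiplying against a non-negative $\varphi\in C^\infty_0(I)$ and invoking Fubini (justified by Lemma \ref{lm:sobolevbase}) delivers the weak form of \eqref{claim}.

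The listed properties of $H$ are then quick to read off: the formula is manifestly invariant under $t\leftrightarrow 1-t$; $H$ is continuous on $I\setminus\{1/2\}$ and bounded on every compact subset of $I$ avoiding $1/2$; near $t=1/2$ the dominant term is $(2/d_I(t))\cdot(-2\log|1-2t|)\sim -8\log|1-2t|$, matching $-4\log(\tfrac{1}{2}-t)^2$ up to an additive constant and lying in $L^q$ locally for every finite $q$. Since $\beta<0$, this forces the right-hand side of \eqref{claim} to diverge to $-\infty$ at $t=1/2$. If $U_\beta$ were locally weakly $(1/2)$-superharmonic, then testing against an arbitrary non-negative $\varphi\in C^\infty_0(I)$ and combining with \eqref{claim} would force $\beta H(t)+2/(t(1-t))\ge 0$ a.e.\ in $I$ (as $U_\beta>0$), in sharp contradiction with the divergence to $-\infty$ at the midpoint.

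\textbf{Main obstacle.} The delicate point is the interplay between the convexity inequality and the principal value: one must verify that the odd singularity $1/(t-y)$ is tamed by the quadratic vanishing of the tangent-line remainder at $y=t$, so that the bound survives the P.V.\ symmetrization. The remaining steps are boundary bookkeeping and elementary logarithmic integration.
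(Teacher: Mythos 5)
Your computational core is correct and coincides with the paper's: the two outer integrals sum exactly to $2\,U_\beta(t)/(t(1-t))$, the reflection $y\mapsto 1-y$ of the piece on $(1/2,1)$, the tangent-line (convexity) bound $t^\beta-y^\beta\le\beta\,t^{\beta-1}(t-y)$ — which indeed survives the principal value, since the residual is nonpositive and vanishes quadratically at $y=t$ — and your two logarithmic integrals reproduce precisely the paper's quantity $G(t)=-\tfrac{1}{1-t}+\log\bigl(4\,t\,(1-t)/(1-2\,t)^2\bigr)$, hence $\beta\,H(t)\,U_\beta(t)$. The paper does the same computation with an $\varepsilon$-truncation instead of your P.V.\ bookkeeping (the truncated logarithms are even $\varepsilon$-independent), so up to this point the two arguments are essentially identical, and your reading of the properties of $H$ and of the final contradiction is fine; in fact, for the failure of superharmonicity it would even suffice to test with $\varphi$ supported in $(1/2-\delta,1/2)$, away from the midpoint.

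The genuine gap is the passage to the weak form of \eqref{claim} \emph{on the whole of} $I$, i.e. for non-negative $\varphi\in C^\infty_0(I)$ whose support contains $t=1/2$. Your pointwise bound is only derived on $I\setminus\{1/2\}$, and ``multiplying by $\varphi$ and invoking Fubini'' is exactly what needs justification across the corner: $U_\beta$ is merely Lipschitz at $1/2$ with a convex kink, so at $t=1/2$ the principal value does not converge at all (the corner contribution is even in $y-1/2$ and diverges to $-\infty$), and for $t$ near $1/2$ one needs a uniform-in-$\varepsilon$ integrable dominant for the truncated inner integrals $F_\varepsilon(t)$ before the limit can be taken inside the $t$-integration; Lemma \ref{lm:sobolevbase} only guarantees absolute convergence of the double integral, not this interchange. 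The paper treats this as the main difficulty: it first obtains the weak inequality for $\varphi\in C^\infty_0(I\setminus\{1/2\})$ and then ``fills the hole'' with cutoffs $\psi_n$ vanishing near $1/2$, a uniform bound on $[\psi_n]_{W^{1/2,2}(I)}$ obtained by interpolation, and the weak convergence to $0$ in $L^2(I\times I)$ of the quotients $(\psi_n(t)-\psi_n(y))/|t-y|$, which makes the extra term disappear in the limit. So either supply such a hole-filling (or an explicit domination/capacity) argument, or weaken the claim to test functions supported away from $1/2$; note also that the obstacle you flagged (convexity versus P.V.) is in fact the harmless part, while the real delicacy sits at the non-smooth midpoint.
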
 
\begin{proof}
We first show that $U_\beta$ satisfies \eqref{claim} in $I\setminus\{1/2\}$.
Let us take $\varphi\in C^\infty_0(I\setminus\{1/2\})$ non-negative, then there exists $0<\delta_0<1/4$ such that its support is contained in the set
\[
\mathcal{I}_{\delta_0}=\left[\delta_0,\frac{1}{2}-\delta_0\right]\cup\left[\frac{1}{2}+\delta_0,1-\delta_0\right].
\] 
For every $\varepsilon>0$ and $t\in I$, we set 
\[
\mathfrak{I}_\varepsilon(t)=(t-\varepsilon,t+\varepsilon)\qquad \mbox{ and }\qquad \mathcal{D}_\varepsilon=\Big\{(t,y)\in\mathbb{R}\times\mathbb{R}\, :\, t-\varepsilon\le y\le t+\varepsilon\Big\}.
\]
By using Fubini's Theorem and a change of variable, we can write as usual
\begin{equation}
\label{parte}
\begin{split}
\iint_{\mathbb{R}\times\mathbb{R}}& \frac{\Big(U_\beta(t)-U_\beta(y)\Big)\,\big(\varphi(t)-\varphi(y)\big)}{|t-y|^{2}}\,dt\,dy\\
&=\lim_{\varepsilon\to 0} \iint_{(\mathbb{R}\times\mathbb{R})\setminus\mathcal{D}_\varepsilon}\frac{\Big(U_\beta(t)-U_\beta(y)\Big)\,\big(\varphi(t)-\varphi(y)\big)}{|t-y|^{2}}\,dt\,dy\\
&=2\,\lim_{\varepsilon\to 0} \int_{\mathbb{R}} \left(\int_{\mathbb{R}\setminus \mathfrak{I}_\varepsilon(t)} \frac{U_\beta(t)-U_\beta(y)}{|t-y|^{2}}\,dy\right)\,\varphi(t)\,dt\\
&=2\,\lim_{\varepsilon\to 0} \int_{\mathcal{I}_{\delta_0}} \left(\int_{\mathbb{R}\setminus \mathfrak{I}_\varepsilon(t)} \frac{U_\beta(t)-U_\beta(y)}{|t-y|^{2}}\,dy\right)\,\varphi(t)\,dt.
\end{split}
\end{equation}
We first observe that, by using that $U_\beta(t)=U_\beta\left(1-t\right)$, 
we have for $t\in \mathcal{I}_{\delta_0}$
\[
\begin{split}
\int_{\mathbb{R}\setminus \mathfrak{I}_\varepsilon(t)} \frac{U_\beta(t)-U_\beta(y)}{|t-y|^{2}}\,dy
&=\int_{\mathbb{R}\setminus \mathfrak{I}_\varepsilon(t)} \frac{U_\beta(1-t)-U_\beta(1-y)}{|t-y|^{2}}\,dy\\
&=\int_{\mathbb{R}\setminus \mathfrak{I}_\varepsilon(t)}\frac{U_\beta(1-t)-U_\beta(1-y)}{|(1-t)-(1-y)|^{2}}\,dy=\int_{\mathbb{R}\setminus \mathfrak{I}_\varepsilon(1-t)} \frac{U_\beta(1-t)-U_\beta(\tau)}{|(1-t)-\tau|^{2}}\,d\tau.
\end{split}
\]
This shows that it is sufficient to consider $t\in[\delta_0,1/2-\delta_0]$.
For almost every $t\in[\delta_0,1/2-\delta_0]$ and every $0<\varepsilon< \delta_0$,
we have\footnote{Observe that by construction $t-\varepsilon> 0$ and $t+\varepsilon<1/2$.}
\[
\begin{split}
\int_{\mathbb{R}\setminus \mathfrak{I}_\varepsilon(t)} \frac{U_\beta(t)-U_\beta(y)}{|t-y|^{2}}\,dy&=\int_0^{t-\varepsilon}\frac{t^\beta-y^\beta}{|t-y|^{2}}\,dy+\int_{t+\varepsilon}^\frac{1}{2} \frac{t^\beta-y^\beta}{|t-y|^{2}}\,dy+\int_\frac{1}{2}^1 \frac{t^\beta-(1-y)^\beta}{(y-t)^{2}}\,dy\\
&+\int_1^{+\infty} \frac{t^\beta}{(y-t)^{2}}\,dy+\int_{-\infty}^0 \frac{t^\beta}{(t-y)^{2}}\,dy\\
&=\int_0^{t-\varepsilon}\frac{t^\beta-y^\beta}{|t-y|^{2}}\,dy+\int_{t+\varepsilon}^\frac{1}{2} \frac{t^\beta-y^\beta}{|t-y|^{2}}\,dy+\int_\frac{1}{2}^1 \frac{t^\beta-(1-y)^\beta}{(y-t)^{2}}\,dy\\
&+\frac{U_\beta(t)}{(1-t)}+\frac{U_\beta(t)}{t}.
\end{split}
\] 
To estimate the remaining integrals, we use the ``above tangent'' property for the convex function $\tau\mapsto\tau^\beta$, to infer that
\[
t^\beta-y^\beta\le \beta\,t^{\beta-1}\,(t-y),\qquad \mbox{ for } y\in\left(0,t-\varepsilon\right)\cup\left(t+\varepsilon,\frac{1}{2}\right),
\]
and 
\[
t^\beta-(1-y)^\beta\le \beta\,t^{\beta-1}\,(t+y-1),\qquad \mbox{ for } y\in\left(\frac{1}{2},1\right).
\]
These yield
\[
\begin{split}
\int_0^{t-\varepsilon} \frac{t^\beta-y^\beta}{|t-y|^{2}}\,dy&+\int_{t+\varepsilon}^\frac{1}{2} \frac{t^\beta-y^\beta}{|t-y|^{2}}\,dy+\int_\frac{1}{2}^1 \frac{t^\beta-(1-y)^\beta}{(y-t)^{2}}\,dy\\
&\le \beta\,t^{\beta-1}\,\int_0^{t-\varepsilon} \frac{t-y}{|t-y|^{2}}\,dy+\beta\,t^{\beta-1}\,\int_{t+\varepsilon}^\frac{1}{2} \frac{t-y}{|t-y|^{2}}\,dy+\beta\,t^{\beta-1}\,\int_\frac{1}{2}^1 \frac{(t+y-1)}{(y-t)^{2}}\,dy.
\end{split}
\]
The last integrals can be explicitly computed. We have 
\[
\begin{split}
\int_0^{t-\varepsilon} \frac{t-y}{|t-y|^{2}}\,dy&+\int_{t+\varepsilon}^{1/2} \frac{t-y}{|t-y|^{2}}\,dy
=\log t-\log\left(\frac{1}{2}-t\right),
\end{split}
\]
and
\[
\begin{split}
\int_{1/2}^1 \frac{(t+y-1)}{(y-t)^2}\,dy&=t\,\int_{1/2}^1 \frac{1}{(y-t)^2}\,dy+\int_{1/2}^1 \frac{y-1}{(y-t)^{2}}\,dy\\
&=t\,\left[\left(\frac{1}{2}-t\right)^{-1}-(1-t)^{-1}\right]\\
&-\frac{1}{2}\,\left(\frac{1}{2}-t\right)^{-1}+\left[\log(1-t)-\log\left(\frac{1}{2}-t\right)\right].
\end{split}
\]
This finally gives that for $t\in[\delta_0,1/2-\delta_0]$, we have
\begin{equation}\label{eq:minore}
\int_{\mathbb{R}\setminus \mathfrak{I}_\varepsilon(t)} \frac{U_\beta(t)-U_\beta(y)}{|t-y|^{1+2\,s}}\,dy\le \beta\,t^{\beta-1}\,G(t)+\frac{U_\beta(t)}{t\,(1-t)},
\end{equation}
where we set for simplicity
\[
\begin{split}
G(t)&=\left[\log t-\log\left(\frac{1}{2}-t\right)\right]+t\,\left[\left(\frac{1}{2}-t\right)^{-1}-(1-t)^{-1}\right]\\
&-\frac{1}{2}\,\left(\frac{1}{2}-t\right)^{-1}+\left[\log(1-t)-\log\left(\frac{1}{2}-t\right)\right].
\end{split}
\]
With simple manipulations, we see that this can be also written as
\[
\begin{split}
G(t)
&=-\frac{1}{1-t}+\log \left(\frac{4\,t\,(1-t)}{\left(1-2\,t\right)^2}\right),
\end{split}
\]
and thus this is a continuous function on $(0,1/2)$ such that 
\[
G\in L^q_{\rm loc}((0,1/2]),\ \mbox{ for every } 1\le q<\infty\qquad \mbox{ and }\qquad \lim_{t\to \left(\frac{1}{2}\right)^-} G(t)=+\infty,
\] 
because of the logarithmic term.
If we now define
\[
H(t):=2\,\frac{G(t)}{t},\quad \mbox{ for } t\in \left(0,\frac{1}{2}\right)\qquad \mbox{ and }\qquad H(t):=H(1-t),\quad \mbox{ for } t\in \left(\frac{1}{2},1\right),
\]
from \eqref{parte} and \eqref{eq:minore}, by recalling that $\varphi$ is non-negative we finally get
\[
\begin{split}
\iint_{\mathbb{R}\times\mathbb{R}}& \frac{\Big(U_\beta(t)-U_\beta(y)\Big)\,\big(\varphi(t)-\varphi(y)\big)}{|t-y|^{2}}\,dt\,dy\le \int_I \left[\beta\,H(t)+ \frac{2}{t\,(1-t)}\right]\,U_\beta(t)\,\varphi(t)\,dt.
\end{split}
\]
This shows that $U_\beta$ is a local weak subsolution of the the claimed equation \eqref{claim}, at least in the open set $I\setminus\{1/2\}$.
\vskip.2cm\noindent
In order to show that $U_\beta$ is a local weak subsolution on the whole interval $I$, it is sufficient to use a standard trick to ``fill the hole'': we take $\varphi\in C^\infty_0(I)$ non-negative and for every natural number $n\ge 5$ we take $\psi_n\in C^\infty(\overline{I})$ such that 
\[
\psi_n\equiv 1 \mbox{ on } \overline{I}\setminus\left[\frac{1}{2}-\frac{2}{n},\frac{1}{2}+\frac{2}{n}\right],\qquad \psi_n\equiv 0 \mbox{ on }\left[\frac{1}{2}-\frac{1}{n},\frac{1}{2}+\frac{1}{n}\right],
\]
and 
\[
0\le \psi_n\le 1,\qquad |\psi_n'|\le C\,n.
\]
The seminorm of $\psi_n$ can be estimated by using its properties and an interpolation inequality (see \cite[Corollary 2.2]{BPS}), i.\,e.
\[
\begin{split}
[\psi_n]^2_{W^{\frac{1}{2},2}(I)}=[1-\psi_n]_{W^{\frac{1}{2},2}(I)}^2&\le C\,\left(\int_I |\psi_n'|^2\,dt\right)^\frac{1}{2}\,\left(\int_I |1-\psi_n|^2\,dt\right)^\frac{1}{2}\\
&\le C\,\left(\int_{\frac{1}{2}-\frac{2}{n}}^{\frac{1}{2}+\frac{2}{n}}|\psi_n'|^2\,dt\right)^\frac{1}{2}\,\left(\int_{\frac{1}{2}-\frac{2}{n}}^{\frac{1}{2}+\frac{2}{n}} |1-\psi_n|^2\,dt\right)^\frac{1}{2}\le \widetilde{C},
\end{split}
\]
where $\widetilde{C}$ does not depend on $n$. This in particular implies that the sequence $\{\Psi_n\}_{n\ge 5}\subset L^2(I\times I)$ defined by
\begin{equation}
\label{bettino}
\Psi_n(t,y):=\frac{\psi_n(t)-\psi_n(y)}{|t-y|},\qquad \mbox{ for a.\,e. } (t,y)\in I\times I,
\end{equation}
is bounded in $L^2(I\times I)$, since by construction
\[
\|\Psi_n\|_{L^2(I\times I)}=[\psi_n]_{W^{\frac{1}{2},s}(I)}.
\] 
Thus, up to a subsequence, it converges weakly in $L^2(I\times I)$. Thanks to the properties of $\psi_n$, such a limit function must coincide with the null one.
\par
The test function $\varphi\,\psi_n$ belongs to $C^\infty_0(I\setminus\{1/2\})$ and is non-negative. From the first part we get
\begin{equation}
\label{hole!}
\begin{split}
\iint_{\mathbb{R}\times\mathbb{R}}& \frac{\Big(U_\beta(t)-U_\beta(y)\Big)\,\big(\varphi(t)\,\psi_n(t)-\varphi(y)\,\psi_n(y)\big)}{|t-y|^{2}}\,dt\,dy\\
&\le \int_I \left[H(t)\,U_\beta(t)+ \frac{2}{t\,(1-t)}\right]\,U_\beta(t)\,\varphi(t)\,\psi_n(t)\,dt.
\end{split}
\end{equation}
We wish to pass to the limit in \eqref{hole!}, as $n$ goes to $\infty$: for the right-hand side, it is easily seen that 
\[
\begin{split}
\lim_{n\to\infty}\int_I \left[H(t)\,U_\beta(t)+ \frac{2}{t\,(1-t)}\right]&\,U_\beta(t)\,\varphi(t)\,\psi_n(t)\,dt=\int_I \left[H(t)\,U_\beta(t)+ \frac{2}{t\,(1-t)}\right]\,U_\beta(t)\,\varphi(t)\,dt,
\end{split}
\]
by the Dominated Convergence Theorem. As for the left-hand side, we split the integral as follows:
\[
\begin{split}
\iint_{\mathbb{R}\times\mathbb{R}}& \frac{\Big(U_\beta(t)-U_\beta(y)\Big)\,\big(\varphi(t)\,\psi_n(t)-\varphi(y)\,\psi_n(y)\big)}{|t-y|^{2}}\,dt\,dy\\
&=\iint_{I''\times I''}\frac{\Big(U_\beta(t)-U_\beta(y)\Big)\,\big(\varphi(t)\,\psi_n(t)-\varphi(y)\,\psi_n(y)\big)}{|t-y|^{2}}\,dt\,dy\\
&+2\,\iint_{I'\times (\mathbb{R}\setminus I'')}\frac{\Big(U_\beta(t)-U_\beta(y)\Big)\,\varphi(t)\,\psi_n(t)}{|t-y|^{2}}\,dt\,dy,
\end{split}
\]
where $I'\Subset I''\Subset I$ and $I'$ contains the support of $\varphi$. For the last integral we can easily pass to the limit as $n$ goes to $\infty$, for the first one we proceed as follows
\[
\begin{split}
\iint_{I''\times I''}&\frac{\Big(U_\beta(t)-U_\beta(y)\Big)\,\big(\varphi(t)\,\psi_n(t)-\varphi(y)\,\psi_n(y)\big)}{|t-y|^{2}}\,dt\,dy\\
&=\iint_{I''\times I''}\frac{\Big(U_\beta(t)-U_\beta(y)\Big)\,\big(\varphi(t)-\varphi(y)\big)}{|t-y|^{2}}\,\frac{\psi_n(t)+\,\psi_n(y)}{2}dt\,dy\\
&+\iint_{I''\times I''}\frac{\Big(U_\beta(t)-U_\beta(y)\Big)\,\big(\psi_n(t)-\psi_n(y)\big)}{|t-y|^{2}}\,\frac{\varphi(t)+\,\varphi(y)}{2}dt\,dy.
\end{split}
\]
By using that 
\[
\frac{\Big(U_\beta(t)-U_\beta(y)\Big)\,\big(\varphi(t)-\varphi(y)\big)}{|t-y|^2}\in L^1(I''\times I''),
\]
and the properties of $\psi_n$, it is easily seen that  
\[
\begin{split}
\lim_{n\to\infty}\iint_{I''\times I''}&\frac{\Big(U_\beta(t)-U_\beta(y)\Big)\,\big(\varphi(t)-\varphi(y)\big)}{|t-y|^{2}}\,\frac{\psi_n(t)+\,\psi_n(y)}{2}dt\,dy\\
&=\iint_{I''\times I''}\frac{\Big(U_\beta(t)-U_\beta(y)\Big)\,\big(\varphi(t)-\varphi(y)\big)}{|t-y|^{2}}\,dt\,dy,
\end{split}
\]
again thanks to the Dominated Convergence Theorem. Finally, the last integral is the most delicate one: with the notation \eqref{bettino}, we can write 
\[
\frac{\Big(U_\beta(t)-U_\beta(y)\Big)\,\big(\psi_n(t)-\psi_n(y)\big)}{|t-y|^{2}}\,\frac{\varphi(t)+\,\varphi(y)}{2}=\Phi(t,y)\,\Psi_n(t,y),
\]
where 
\[
\Phi(t,y)=\frac{\Big(U_\beta(t)-U_\beta(y)\Big)}{|t-y|}\,\frac{\varphi(t)+\,\varphi(y)}{2}\in L^2(I''\times I'').
\]
Thus, by using the weak convergence of $\{\Psi_n\}_{n\ge 5}$ previously discussed, we get
\[
\begin{split}
\lim_{n\to\infty}\iint_{I''\times I''}\frac{\Big(U_\beta(t)-U_\beta(y)\Big)\,\big(\psi_n(t)-\psi_n(y)\big)}{|t-y|^{2}}\,\frac{\varphi(t)+\,\varphi(y)}{2}dt\,dy=0.
\end{split}
\]
Finally, we obtain that we can pass to the limit in \eqref{hole!} as $n$ goes to $\infty$ and obtain
\[
\iint_{\mathbb{R}\times\mathbb{R}} \frac{\Big(U_\beta(t)-U_\beta(y)\Big)\,\big(\varphi(t)-\varphi(y)\big)}{|t-y|^{2}}\,dt\,dy\\
\le \int_I \left[H(t)\,U_\beta(t)+2\, \frac{U_\beta(t)}{(1-t)}+2\, \frac{U_\beta(t)}{t}\right]\,\varphi(t)\,dt,
\]
for every $\varphi\in C^\infty_0(I)$ non-negative, as desired.
\end{proof}


\begin{thebibliography}{100}
	
	\bibitem{An} A. Ancona, On strong barriers and inequality of Hardy for domains in $\mathbb{R}^n$, J. London Math. Soc., {\bf 34} (1986), 274--290.
	
	\bibitem{BBZ} F. Bianchi, L. Brasco, F. Sk, A. C. Zagati, A note on the supersolution method for Hardy's inequality, preprint (2022), available at {\tt https://cvgmt.sns.it/paper/5712/}
	
	\bibitem{BD} K. Bogdan, B. Dyda, The best constant in a fractional Hardy inequality, Math. Nachr., {\bf 284} (2011), 629--638. 
	
	\bibitem{BZ} K. Bogdan, T. Zak, On Kelvin transformation, J. Theoret. Probab., {\bf 19} (2006), 89--120.
	
	\bibitem{BC} L. Brasco, E. Cinti, On fractional Hardy inequalities in convex sets, Discrete Contin. Dyn. Syst., {\bf 38} (2018), 4019--4040.
	
	\bibitem{BF} L. Brasco, G. Franzina, Convexity properties of Dirichlet integrals and Picone-type inequalities, Kodai Math. J., {\bf 37} (2014), 769--799.
		
	\bibitem{BP} L. Brasco, E. Parini, The second eigenvalue of the fractional $p-$Laplacian, Adv. Calc. Var., {\bf 9} (2016), 323--355.
	
	\bibitem{BPS} L. Brasco, E. Parini, M. Squassina, Stability of variational eigenvalues for the fractional $p-$Laplacian, Discrete Contin. Dyn. Syst., {\bf 36} (2016), 1813--1845. 
	
	
	\bibitem{BMS} L. Brasco, S. Mosconi, M. Squassina, Optimal decay of extremals for the fractional Sobolev inequality, Calc. Var. Partial Differential Equations, {\bf 55} (2016), Art. 23, 32 pp.
		
	\bibitem{BL} H. Br\'ezis, E. Lieb, A relation between pointwise convergence of functions and convergence of functionals,
	Proc. Amer. Math. Soc., {\bf 88} (1983), 486--490.
	
	\bibitem{BB} D. Bucur, G. Buttazzo, {\it Variational Methods in Shape Optimization Problems}. Progress in Nonlinear Differential Equations and their Applications, {\bf 65}. Birkh\"auser Boston, Inc., Boston, MA, 2005.
	
	\bibitem{CaSi} L. Caffarelli, L. Silvestre, An extension problem related to the fractional Laplacian, Comm. Partial Differential Equations, {\bf 32} (2007), 1245--1260.
		
	\bibitem{Da94} E. B. Davies, The Hardy constant, Quart. J. Math. Oxford Ser. (2), {\bf 46} (1995), 417--431.
	
	\bibitem{Da}  E. B. Davies, {\it Heat kernels and spectral theory}. Cambridge Tracts in Mathematics, {\bf 92}. Cambridge University Press, Cambridge, 1989.
	
	\bibitem{DPV} E. Di Nezza, G. Palatucci, E. Valdinoci, Hitchhikers guide to the fractional Sobolev spaces, Bull. Sci. Math., {\bf 136}, 521--573.
	
	\bibitem{DK} B. Dyda, M. Kijaczko, On density of compactly supported smooth functions in fractional Sobolev spaces, Ann. Mat. Pura Appl. (4), {\bf 201} (2022), 1855--1867.
	

	\bibitem{Dy} B. Dyda, 
	Fractional Hardy inequality with a remainder term, Colloq. Math., {\bf 122} (2011), 59--67.
	
	\bibitem{Dy2} B. Dyda, A fractional order Hardy inequality, Illinois J. Math., {\bf 48} (2004), 575--588.
	
	\bibitem{DV} B. Dyda, A. V. V\"ah\"akangas, A framework for fractional Hardy inequalities, Ann. Acad. Sci. Fenn. Math., {\bf 39} (2014), 675--689.
	
	\bibitem{EUSV} D. E. Edmunds, R. Hurri-Syrj\"anen, A. V. V\"ah\"akangas, Fractional Hardy-type inequalities in domains with uniformly fat complement, Proc. Amer. Math. Soc., {\bf 142} (2014), 897--907.
	
	\bibitem{EUS} D. E. Edmunds, R. Hurri-Syrj\"anen, Remarks on the Hardy inequality, J. Inequal. Appl., {\bf 1} (1997), 125--137.
	
	\bibitem{FMT} S. Filippas, L. Moschini, A. Tertikas, Sharp trace Hardy-Sobolev-Maz'ya inequalities and the fractional Laplacian, Arch. Ration. Mech. Anal., {\bf 208} (2013), 109--161.
	
	\bibitem{FSV} A. Fiscella, R. Servadei, E. Valdinoci, Density properties for fractional Sobolev spaces, Ann. Acad. Sci. Fenn. Math., {\bf 40} (2015), 235--253.
	
	\bibitem{FS} R. L. Frank, R. Seiringer, Sharp fractional Hardy inequalities in half-spaces. In {\it Around the research of Vladimir Maz'ya}. I, 161--167,
	Int. Math. Ser. (N. Y.), {\bf 11}, Springer, New York, 2010. 
	
	\bibitem{FSspace} R. L. Frank, R. Seiringer, Non-linear ground state representations and sharp Hardy inequalities, J. Funct. Anal., {\bf 255} (2008), 3407--3430. 
	

	\bibitem{FP} G. Franzina, G. Palatucci, Fractional $p-$eigenvalues, Riv. Mat. Univ. Parma, {\bf 5} (2014), 315–-328.
	
	\bibitem{Ha} P. Hajlasz, Pointwise Hardy inequalities, Proc. Amer. Math. Soc., {\bf 127} (1999), 417 -- 423.
		
	\bibitem{IMS} A. Iannizzotto, S. Mosconi, M. Squassina, Global H\"older regularity for the fractional $p-$Laplacian, Rev. Mat. Iberoam., {\bf 32} (2016), 1353--1392.
	
	
\bibitem{KM} J. Kinnunen, O. Martio, Hardy's inequalities for Sobolev functions, Math. Res. Lett., {\bf 4} (1997), 489--500.
	
	
	\bibitem{LaSo} A. Laptev, A. V. Sobolev, Hardy inequalities for simply connected planar domains. {\it Spectral theory of differential operators}, 133--140, Amer. Math. Soc. Transl. Ser. 2, 225, Adv. Math. Sci., 62, Amer. Math. Soc., Providence, RI, 2008.
	
	\bibitem{Lewis} J. L. Lewis, Uniformly fat sets, Trans. Amer. Math. Soc., {\bf 308} (1988), 177--196.
	
	
	\bibitem{LS} M. Loss, C. Sloane, Hardy inequalities for fractional integrals on general domains, J. Funct. Anal., {\bf 259} (2010), 1369--1379.
	
	\bibitem{MMP} M. Marcus, V. J. Mizel, Y. Pinchover, On the best constant for Hardy's inequality in $\mathbb{R}^n$, Trans. Amer. Math. Soc., {\bf 350} (1998), 3237--3255.
	
	\bibitem{MS} M. Marcus, I. Shafrir, An eigenvalue problem related to Hardy's $L^p$ inequality, Ann. Scuola Norm. Sup. Pisa Cl. Sci. (4), {\bf 29} (2000), 581--604.
	
	\bibitem{MaSo} T. Matskewich, P. E. Sobolevskii,
	The best possible constant in generalized Hardy's inequality for convex domain in $\mathbb{R}^n$,
	Nonlinear Anal., {\bf 28} (1997), 1601--1610.
	
	\bibitem{Maz} V. Maz'ya, {\it Sobolev spaces with applications to elliptic partial differential equations}. Second, revised and augmented edition. Grundlehren der Mathematischen Wissenschaften [Fundamental Principles of Mathematical Sciences], {\bf 342}. Springer, Heidelberg, 2011.
	
	\bibitem{MSK} K. Mohanta, F. Sk, On the best constant in fractional $p-$Poincar\'e inequalities on cylindrical domains, Differential Integral Equations, {\bf 34} (2021), 691--712.
	
	\bibitem{Ne} J. Ne\v{c}as, Sur une m\'ethode pour r\'esoudre les \'equations aux d\'eriv\'ees partielles du type elliptique, voisine de la variationnelle, Ann. Scuola Norm. Sup. Pisa (3), {\bf 16} (1962), 305--326.
	
	\bibitem{OK} B. Opic, A. Kufner, {\it Hardy-type inequalities}. Pitman Research Notes in Mathematics Series, {\bf 219}. Longman Scientific \& Technical, Harlow, 1990.
	
	\bibitem{ROS} X. Ros-Oton, J. Serra, The Dirichlet problem for the fractional Laplacian: regularity up to the boundary, J. Math. Pures Appl. (9), {\bf 101} (2014), 275--302.
	
	\bibitem{Sk} F. Sk, Characterizations of fractional Sobolev--Poincar\'e and (localized) Hardy inequalities, preprint (2022), available at {\tt https://arxiv.org/abs/2204.06636}
		
	\bibitem{Wa} A. Wannebo, Hardy inequalities, Proc. Amer. Math. Soc., {\bf 109} (1990), 85--95.
	
\end{thebibliography}
\end{document}